\newcommand{\ea}{\end{array}}
\newtheorem{thm}{Theorem}[section]
\newtheorem{prop}{Proposition}[section]
\newtheorem{lem}{Lemma}[section]
\newtheorem{defn}{Definition}[section]
\begin{document}

\begin{frontmatter}

\title{Well-Posedness of Stochastic Chemotaxis System}

\author[mymainaddress]{Yunfeng Chen}
\ead{cyf01@mail.ustc.edu.cn}

\author[mymainaddress]{Jianliang Zhai}
\ead{zhaijl@ustc.edu.cn}

\author[mymainaddress,mysecondaryaddress]{Tusheng Zhang}
\ead{Tusheng.Zhang@manchester.ac.uk}

\address[mymainaddress]{School of Mathematical Sciences, University of Science and Technology of China, Hefei, 230026, China}
\address[mysecondaryaddress]{ Department of Mathematics, University of Manchester,
 Oxford Road, Manchester, M13 9PL, UK.}

\begin{abstract}
In this paper, we establish the existence and uniqueness of
solutions of elliptic-parabolic stochastic Keller-Segel systems. The solution is
obtained through a carefully designed localization procedure together with some a priori estimates. Both noise of linear growth and nonlinear noise
are considered. The $L^p$ It\^{o} formula plays an important role.
\end{abstract}

\begin{keyword}
Stochastic partial differential equations \sep Keller-Segel system \sep
Mild solutions \sep Logistic source \sep Nonlinear noise
\MSC[2020] \sep  60H30  \sep 65H15 \sep 60H50\sep  35K55
\end{keyword}
\end{frontmatter}


\section{Introduction}
\setcounter{equation}{0}
The purpose of this paper is to establish the well posedness of  the following stochastic partial differential equation
(SPDE) on $\mathcal{O} \times (0, T)$:
\begin{equation}\label{m}
\left\{
    \begin{array}{lr}
du = \Delta u \, dt - \nabla \cdot (\chi u \nabla v)  \, dt + g(u) \, dt + \sum_{i=1}^{\infty}\sigma_i(u) \,  \, dW^i_t, \quad  \\
0 = \Delta v + u - v, \quad
\end{array}
\right.
\end{equation}
with the initial-boundary condition:
\begin{equation*}
	\left\{
	\begin{array}{lr}
		\frac{{\partial }}{{\partial \boldsymbol{n}}}u  = 0, \quad \frac{{\partial }}{{\partial \boldsymbol{n}}}v = 0, &\text{for } x \in\partial \mathcal{O},\\
		u(x,0) = u_0(x), \quad &\text{for } x \in \mathcal{O},\
	\end{array}
	\right.
\end{equation*}
where $\mathcal{O} \subset \mathbb{R}^2$ is a bounded domain with a smooth boundary, $\{W^k_t\}_{k\in\mathbb{N}}$ represents a family of independent real-valued Brownian motions on a given complete, filtered probability space $(\Omega,\mathcal{F},\{\mathcal{F}_t\}_{t\geq0},\mathbb{P})$, $\chi > 0$ denotes the chemotactic sensitivity, and the symbol $\frac{{\partial}}{{\partial \boldsymbol{n}}}$ denotes the derivative with respect to the outer normal of $\partial \mathcal{O}$.

The initial function $u_0$ is assumed to be nonnegative and belong to $C(\bar{\mathcal{O}})$ with mass $m_0=\int_\mathcal{O} u_0(x) \, dx$.
The function $g \in C^{1}([0,\infty)))$ satisfies appropriate conditions specified later.
The logistic function $g(u) = \mu u(1 - u)$ can be considered as a prototype for $g$. It is commonly used in various mathematical modeling, including logistic regression and population growth.

The problem (\ref{m}) is a random perturbation of the  Keller–Segel model of chemotaxis. The function $u(x,t):\mathcal{O}\times\mathbb{R}^+\rightarrow\mathbb{R}^+$ describes the particle density at time $t$ and position
$x\in \mathcal{O}$, while $v(x,t):\mathcal{O}\times\mathbb{R}^+\rightarrow\mathbb{R}^+$ represents the density of the external chemical substance. Chemotaxis is the directed movement of cells in response to chemical gradients in their environment. When cells move towards an increasing signal concentration, it is known as chemoattraction, while moving away from the increasing signal concentration is called chemorepulsion. The concept of chemotaxis was introduced by Keller and Segel \cite{KS}. For detailed examples highlighting the biological relevance of chemotaxis, see the paper \cite{OTYM}.
In the past two decades, many people have thoroughly analyzed various mathematical models of partial differential equations related to chemotaxis.  Investigations have focused on whether the solutions to these equations exhibit blow-up phenomena within a finite time or exist globally, as discussed in \cite{HV,HP,H1,H2,H3,M1,M2} and reference therein.

 Taking into account the effect of the random environment and random external forces,  stochastic Keller–Segel models were investigated recently.  
In \cite{FGL}, the authors established the local existence of solutions for a class  of nonlinear SPDEs, including the stochastic Keller–Segel equations on a torus. In \cite{HQ}, the authors showed that for initial conditions with sufficiently small $L^4$ norms, the stochastic Keller–Segel model driven by divergence-type noise admits a global weak solution.
In \cite{MST}, it is shown that if  initial mass is sufficiently large, the solutions of the stochastic Keller–Segel system blow up in finite time  with probability one. On the other hand, with the appearance of the logistic term $g$ it is known(see
e.g.\cite{IM}) that the global solutions of the deterministic Keller–Segel systems  exist for all initial masses.
In recent works (\cite{FL, HLL, RTW, RZZ, TY}), it has been demonstrated that the appropriate introduction of nonlinear noise can prevent the explosion of certain stochastic equations. Notably, the authors in \cite{RSG} established the global existence of solutions for the Keller-Segel model in Gevrey-type Fourier-Lebesgue spaces with significantly high probability. Inspired by these findings, we also investigate a chemotaxis equation driven by a class of nonlinear noise in the final section. We will show that this nonlinear noise can effectively play the role of the logistic source $g$ in preventing the solution from exploding.

\vskip 0.3cm

The purpose of this paper is to obtain the well-posedness of the stochastic Keller–Segel system with a logistic type source or a nonlinear noise. The appearance of the random noise makes the solution of the system singular. The methods of solving the deterministic system do not apply. We first prove the existence of a local maximum solution by a designed localizing procedure and the Banach fixed point theorem.
To obtain the global existence, we prove a uniform a priori  estimate for the local solutions. To this end, we prove a $L^p$ It\^{o} formula for the local solution using approximation arguments.

\vskip 0.3cm

The organization of the paper is as follows. In Section \ref{section2}, we introduce the notations, assumptions and the main results of the paper.  Section 3 is devoted to the existence and uniqueness of local solutions. In Section 4, the global solutions of stochastic Keller–Segel system with linear growth noise are obtained.  In Section 5, we prove the global existence and uniqueness of the solution to stochastic chemotaxis equation driven by nonlinear noise.

\section{Statement of the Main Result}\label{section2}
\setcounter{equation}{0}
We begin by introducing the notations used throughout the paper. The external random driving force, denoted by $\{W^k_t\}_{k\in\mathbb{N}}$, represents a family of independent real-valued Brownian motions on a given complete, filtered probability space $(\Omega,\mathcal{F},\{\mathcal{F}_t\}_{t\geq0},\mathbb{P})$.

For $k, q, r \in [1,\infty]$, we denote the norm in the $L^q(\mathcal{O})$ space by $\|\cdot\|_q$. $\|\cdot\|_{r,q,T}$ denotes the norm in the space $L^{r,q,T}$ defined as $L^r([0,T];L^q(\mathcal{O}))$. Moreover, $W^{k,q}(\mathcal{O})$ denotes the Sobolev space of functions whose distributional derivatives up to order $k$ belong to $L^q(\mathcal{O})$.

The space $\ell^2$ is defined as the set of all sequences of real numbers $(x_n)$ satisfying $\sum x_n^2<\infty$. Additionally, $C(\overline{\mathcal{O}})$ will refer to the space of continuous functions on the closure of $\mathcal{O}$.

For $p\in(1,\infty)$, let $A := A_p$ denote the realization of $-\Delta$ in $L^p(\mathcal{O})$ with domain $\mathcal{D}(A) = \{\varphi \in W^{2,p}(\mathcal{O}) : \frac{\partial\varphi}{\partial \boldsymbol{n}}|_{\partial\mathcal{O}} = 0\}$. The operator $A+1$ possesses fractional powers $(A+1)^\beta, \beta\geq 0$, the domains of which have the embedding property:
  \begin{align*}
      D((A_p+1)^\beta)\hookrightarrow C(\Bar{\mathcal{O}})\ \ \text{if} \ \beta>\frac{1}{p}.
  \end{align*}
  The fact that the spectrum of $A$ is a $p$-independent countable set of positive real numbers entails the following consequences. For the proofs of this lemma, we refer to \cite{HW,W2010}.
  \begin{lem}
      Let $\left(e^{-tA}\right)_{t\geq 0}$ be the Neumann heat semigroup in $\mathcal{O}$.
       Let $\nu_1$ denote the first nonzero eigenvalue of $A$ on $\mathcal{O}$ under Neumann boundary condition. Then, there exists a constant $C$, depending only on $\mathcal{O}$, such that:
      \begin{itemize}
  \item[(\romannumeral 1)] If $1\leq p\leq \infty$, for $t>0$
  \begin{align}
      \|(A+1)^\beta e^{-tA} \omega\|_{p}\leq Ct^{-\beta}e^{-\nu_1t}\|\omega\|_{p},\label{A1}\\
       \|\nabla e^{-tA} \omega\|_{p}\leq C(1+t^{-\frac{1}{2}})e^{-\nu_1t}\|\omega\|_{p},\label{A2}
  \end{align}
holds for all $\omega\in L^{p}(\mathcal{O})$.
  \item[(\romannumeral 2)] If $p\in [2,\infty)$, for $t>0$
  \begin{align}\label{A3}
       \|\nabla e^{-tA} \omega\|_{p}\leq Ce^{-\nu_1t}\|\omega\|_{W^{1,p}(\mathcal{O})},
  \end{align}
is valid for all $\omega\in W^{1,p}(\mathcal{O})$.
		
\item[(\romannumeral 3)]
If $\beta>0$, $p\in (1,\infty), t>0$, then the operators $(A+1)^\beta e^{-tA}\nabla\cdot$ and $e^{-tA}\nabla\cdot$ both possess extension to  operators from $L^{p}(\mathcal{O})$ into $L^{p}(\mathcal{O})$ which fulfill
  \begin{align}
      &\|(A+1)^\beta e^{-tA}\nabla\cdot w\|_{p}\leq C(\epsilon)t^{-\frac{1}{2}-\beta-\epsilon}e^{-\nu_1 t}\|w\|_{p}, \label{A4}\\
      &\| e^{-tA}\nabla\cdot w\|_{p}\leq Ct^{-\frac{1}{2}}e^{-\nu_1 t}\|w\|_{p},\label{A5}
  \end{align}
  for all $\omega\in (L^p(\mathcal{O}))^2$, any $\epsilon>0$.
	\end{itemize}
  \end{lem}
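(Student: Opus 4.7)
The plan is to invoke the standard analytic-semigroup machinery for the Neumann Laplacian on a smooth bounded planar domain, together with a spectral-gap decomposition and a duality argument for the divergence-type bounds. Since the paper itself refers the reader to \cite{HW,W2010}, I would follow Winkler's layout closely: the three items correspond respectively to a smoothing-plus-decay estimate, a boundary-aware gradient bound that trades a temporal singularity for $W^{1,p}$ regularity, and a pair of divergence estimates handled by duality.

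For part (i), I would begin by recalling that $A+1$ generates a bounded analytic semigroup on $L^p(\mathcal{O})$ for every $p\in(1,\infty)$, so the fractional-power inequality $\|(A+1)^\beta e^{-t(A+1)}\omega\|_p\leq Ct^{-\beta}\|\omega\|_p$ is classical, and extends to the endpoints $p\in\{1,\infty\}$ via Gaussian heat-kernel bounds for the Neumann heat kernel. To extract the decay factor $e^{-\nu_1 t}$ I would decompose $\omega=\bar{\omega}+(\omega-\bar{\omega})$ into its spatial mean and its mean-free part, verify $\|e^{-tA}(\omega-\bar{\omega})\|_2\leq e^{-\nu_1 t}\|\omega-\bar{\omega}\|_2$ by Parseval on the Neumann eigenbasis, and then upgrade the bound to $L^p$ by ultracontractivity. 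The semigroup identity $e^{-tA}=e^{-(t/2)A}e^{-(t/2)A}$ then lets one combine the smoothing estimate on one half-step with the exponential decay on the other, giving both inequalities of (i); the gradient bound \eqref{A2} is simpler because $\nabla$ already annihilates $\bar{\omega}$, so the spectral gap applies directly to $\nabla e^{-tA}\omega$ as a whole.

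Part (ii) trades the singular $t^{-1/2}$ for $W^{1,p}$ regularity by commuting $\nabla$ through the semigroup; this can be done for $p\in[2,\infty)$ via a reflection of $\omega$ across $\partial\mathcal{O}$ that preserves the Neumann condition on $\nabla\omega$, reducing the bound to the already-established $L^p$ estimate applied to $\nabla\omega$. Part (iii) is then handled by duality: the pairing $\langle e^{-tA}\nabla\cdot w,\phi\rangle=-\langle w,\nabla e^{-tA}\phi\rangle$ together with \eqref{A2} applied at the conjugate exponent $p'$ yields \eqref{A5}; the fractional-power bound \eqref{A4} then follows from $(A+1)^\beta e^{-tA}=(A+1)^\beta e^{-(t/2)A}\cdot e^{-(t/2)A}$, estimating the first factor by \eqref{A1} at exponent $\beta+\epsilon$ and the second factor by \eqref{A5}, with the arbitrary $\epsilon>0$ compensating for the fact that $(A+1)^\beta$ is not exactly compatible with the $L^p$ divergence theory near the boundary. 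The main obstacle is the interaction of the Neumann boundary condition with the gradient and divergence operators, which is precisely what forces the restriction $p\in[2,\infty)$ in (ii) and the unavoidable $\epsilon$-loss in (iii); this is the step where I would consult \cite{HW,W2010} most carefully rather than attempt a self-contained re-derivation.
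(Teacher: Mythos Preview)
The paper does not give a proof of this lemma at all: after the statement it simply writes ``For the proofs of this lemma, we refer to \cite{HW,W2010}.'' Your sketch is therefore not competing with any argument in the paper; it is a plausible reconstruction of what one finds in the cited Horstmann--Winkler and Winkler papers, and the overall strategy (analytic-semigroup smoothing, spectral-gap decay on the mean-free part, duality for the divergence bounds) is indeed how those references proceed.

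One caveat worth flagging: the inequality \eqref{A1} as stated, with uniform $e^{-\nu_1 t}$ decay for \emph{all} $\omega\in L^p$, cannot literally hold since constants are fixed by $e^{-tA}$ and by $(A+1)^\beta$; in the cited sources the decay factor appears either for mean-zero $\omega$ or is absorbed into a bound of the form $C(1+t^{-\beta})$. Your decomposition $\omega=\bar\omega+(\omega-\bar\omega)$ correctly anticipates this, but you should note that the constant part contributes a term with no exponential decay, so the lemma as printed is slightly informal. This is a defect of the statement, not of your approach.
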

\vskip 0.3cm

Given $f\in L^p(\mathcal{O})$, let $w$ be a weak solution to the boundary value problem:
\begin{equation}
\left\{
\begin{array}{lr}
	-\Delta w+w=f\ \ \ &\text{in}\ \mathcal{O},\notag\\
	\frac{\partial w}{\partial \boldsymbol{n}}=0\ \ \ &\text{on}\ \partial \mathcal{O}.
\end{array}
\right.
\end{equation}
Then the solution $w$ can be expressed as
\begin{align*}
    w(x)=G*f(x)=\int_\mathcal{O} G(x,y)f(y)dy  \ \ \ \text{for}\ x\in \mathcal{O},
\end{align*}
where $G(x,y)$ is the Green's function of $-\Delta+1$ on $\mathcal{O}$ with homogeneous Neumann boundary conditions. Using the $L^r$-estimate(for $2<r<\infty)$ for elliptic equations(see \cite{T}), we have
\begin{align*}
    \|w\|_{2,r}\leq \|f\|_r,
\end{align*}
and the Sobolev embedding $W^{1,r}(\mathcal{O})\hookrightarrow C(\bar{\mathcal{O}})$(see\cite{H}) implies also that
\begin{align}\label{w}
    \|w\|_\infty+\|\nabla w\|_\infty\leq C\|f\|_r.
\end{align}
\vskip 0.4cm
Consider the stochastic chemotaxis system on $\mathcal{O} \times (0, T)$:
\begin{equation}\label{main}
\left\{
    \begin{array}{lr}
du = \Delta u \, dt - \nabla \cdot (\chi u \nabla v)  \, dt + g(u) \, dt + \sum_{i=1}^{\infty}\sigma_i(u) \,  \, dW^i_t, \quad  \\
0 = \Delta v + u - v, \quad
\end{array}
\right.
\end{equation}
with the initial-boundary condition:
\begin{equation*}
	\left\{
	\begin{array}{lr}
		\frac{{\partial }}{{\partial \boldsymbol{n}}}u  = 0, \quad \frac{{\partial }}{{\partial \boldsymbol{n}}}v = 0, &\text{for } x \in\partial \mathcal{O},\\
		u(x,0) = u_0(x), \quad &\text{for } x \in \mathcal{O}.\
	\end{array}
	\right.
\end{equation*}
For the $\ell^2$-valued  mapping $\sigma=(\sigma_k)_{k=1}^\infty$, where $\sigma_k:\mathbb{R}\rightarrow \mathbb{R}$, we introduce the following assumption:
\begin{itemize}
	\item[{\bf (H-1)}.] There exists a positive constants $K$  such that for all $z_1,z_2,z\in \mathbb{R}$,
	\begin{itemize}
    \item[\textbullet] $\sigma(0) = 0$,
    \item[\textbullet] $\| \sigma(z)\|_{\ell^2}\leq K(|z|+ 1)$,
    \item[\textbullet] $\| \sigma(z_1)-\sigma(z_2)\|_{\ell^2}\leq K |z_1 - z_2|$.
\end{itemize}	
\end{itemize}
\vskip 0.3cm
Regarding the nonlinear function $g$, we introduce the condition:
\begin{itemize}
	\item[{\bf (H-2)}.] \begin{align}\label{g}
g(0)\geq 0,\ \ \text{and}\ \  g(s) \leq c_1 - \mu s^2, \quad \forall s \geq 0,
\end{align}
where $c_1$ and $\mu$ are positive constants.	
\end{itemize}
\vskip 0.3cm
\begin{defn}\label{mild solution}
	We say that $(u,v)$ is a mild solution of system (\ref{main}) if $(u,v)$ is a progressively measurable stochastic process with values in $C(\bar{\mathcal{O}})\times W^{1,\infty}({\mathcal{O}})$ that satisfies:
	\begin{align*}
		&u(t) = e^{-tA}u_0 - \int_0^t e^{-(t-s)A}\left( \nabla \cdot \left( \chi u(s) \nabla v(s) \right) - g(u(s)) \right)\, ds + \sum_{i=1}^\infty\int_0^t e^{-(t-s)A} \sigma_i(u(s)) \,  \, dW_s^i, \\
        &v(t)=G*u(t),
	\end{align*}
for all $t\geq 0$.
\end{defn}

Here is the first main result, whose proof is given in Section 4.
\begin{thm}\label{Thm main}
	Suppose the assumptions {\bf (H-1)}, {\bf (H-2)} are in place with some $\mu>\frac{\chi}{2}$. Then
for any nonnegative $u_0\in C(\bar{\mathcal{O})}$, the system (\ref{main}) possesses a unique
nonnegative  mild solution $(u,v)$.
\end{thm}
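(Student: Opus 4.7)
I would follow the three-stage programme sketched in the introduction: (i) build a local maximal solution on a random interval $[0,\tau^{\ast})$ by a truncation and fixed-point argument, (ii) prove an a priori estimate that is uniform in the truncation parameter, and (iii) conclude that $\tau^{\ast}=\infty$ almost surely, thereby obtaining the global solution. Uniqueness will be inherited from the uniqueness of the truncated equations.

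\textbf{Local existence and positivity.} Since $v(t)=G\ast u(t)$ depends linearly on $u(t)$, I would eliminate $v$ and rewrite the first equation of (\ref{main}) as a single semilinear SPDE whose drift is essentially quadratic in $u$ and whose noise satisfies (H-1). Multiplying the chemotactic and logistic drifts by a smooth cutoff $\theta_{R}$ of $\|u(\cdot)\|_{C(\bar{\mathcal{O}})}$, I would apply Banach's fixed-point theorem to the truncated mild formulation in a suitable Banach space of progressively measurable $C(\bar{\mathcal{O}})$-valued processes on a short time interval. The needed bounds come from (\ref{A1})--(\ref{A5}), the elliptic estimate (\ref{w}) which controls $\|v\|_{\infty}+\|\nabla v\|_{\infty}$ by $\|u\|_{r}$, the embedding $D((A{+}1)^{\beta})\hookrightarrow C(\bar{\mathcal{O}})$ for $\beta>1/p$, the Lipschitz property of $\sigma$ in (H-1), and a Burkholder--Davis--Gundy estimate for the stochastic convolution. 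Patching in $R$ produces a unique maximal mild solution on $[0,\tau^{\ast})$, where $\tau^{\ast}=\lim_{R\to\infty}\tau_{R}$ and $\tau_{R}=\inf\{t:\|u(t)\|_{\infty}\ge R\}$. Positivity of $u$ is obtained by comparing the truncated solution with the zero solution: because $\sigma(0)=0$ and $g(0)\ge 0$, applying the $L^{p}$ It\^{o} formula to the negative part $(u^{-})^{p}$ gives $u\ge 0$ almost surely.

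\textbf{A priori $L^{2}$ estimate.} This is the heart of the argument and the place where the hypothesis $\mu>\chi/2$ is used. Applying the $L^{p}$ It\^{o} formula with $p=2$ to $\|u(t\wedge\tau_{R})\|_{2}^{2}$, the diffusive term contributes $-2\|\nabla u\|_{2}^{2}$, and two integrations by parts combined with the elliptic relation $\Delta v=v-u$ yield, using $u,v\ge 0$,
\begin{align*}
-2\!\int_{\mathcal{O}} u\,\nabla\!\cdot(\chi u\nabla v)\,dx \;=\; \chi\!\int_{\mathcal{O}} u^{3}\,dx \;-\; \chi\!\int_{\mathcal{O}} u^{2}v\,dx \;\le\; \chi\!\int_{\mathcal{O}} u^{3}\,dx.
\end{align*}
The logistic assumption (H-2) contributes $2\!\int u\,g(u)\le 2c_{1}\!\int u-2\mu\!\int u^{3}$, so the net coefficient in front of $\int u^{3}$ is $-(2\mu-\chi)<0$; this is exactly where $\mu>\chi/2$ enters. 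The It\^{o} correction is bounded by $K^{2}\!\int(u^{2}+1)\,dx$ via (H-1). A BDG estimate on the martingale part followed by Gronwall's inequality then yields
\begin{align*}
\mathbb{E}\!\sup_{t\le T}\|u(t\wedge\tau_{R})\|_{2}^{2}\;+\;\mathbb{E}\!\int_{0}^{T\wedge\tau_{R}}\!\!\int_{\mathcal{O}} u^{3}\,dx\,dt\;\le\;C(T,u_{0}),
\end{align*}
uniformly in $R$.

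\textbf{Bootstrap, conclusion, and main obstacle.} To upgrade the $L^{2}$ bound to an $L^{\infty}$ bound I would return to the mild formulation and use (\ref{A4})--(\ref{A5}) combined with (\ref{w}), which converts an $L^{r}$ bound on $u$ into an $L^{\infty}$ bound on $\nabla v$. Choosing $\beta>1/p$ and $\epsilon>0$ small enough that $\tfrac{1}{2}+\beta+\epsilon<1$ keeps the kernel $(t-s)^{-1/2-\beta-\epsilon}e^{-\nu_{1}(t-s)}$ integrable and allows an iteration $L^{2}\to L^{p_{1}}\to\cdots\to L^{\infty}$; the stochastic convolution is treated in parallel by factorization and BDG, together with (H-1). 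This produces $\mathbb{E}\sup_{t\le T}\|u(t\wedge\tau_{R})\|_{\infty}<\infty$ independently of $R$, which forces $\tau_{R}\nearrow\infty$ and hence $\tau^{\ast}=\infty$ almost surely, completing the global existence. Uniqueness of the global solution follows from the uniqueness of the truncated solutions together with $\tau_{R}\to\infty$. The main obstacle I foresee is twofold. First, the rigorous justification of the $L^{p}$ It\^{o} formula for the mild solution, which lacks the regularity needed for a direct application of classical It\^{o} calculus and must be obtained by approximation (as announced in the introduction). Second, the precise quantitative book-keeping that matches the hypothesis $\mu>\chi/2$ with the $L^{2}$ computation above is delicate: any coarser treatment of the chemotactic term (for instance, trying to absorb $-\chi\!\int u^{2}v$ by Young's inequality rather than exploiting its sign) would destroy the sharp constant $\chi/2$.
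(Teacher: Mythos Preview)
Your three-stage programme matches the paper's exactly: cutoff plus Banach fixed point for local existence, an $L^{p}$ It\^{o} formula (justified via Yosida approximation, as you anticipated) for the a priori bound, and a mild-formulation bootstrap to $L^{\infty}$. The one substantive difference is the exponent at which you run the energy estimate. You take $p=2$ and obtain the coefficient $-(2\mu-\chi)$ in front of $\int u^{3}$; the paper instead runs the identical computation at any $p_{0}\in\bigl(2,\tfrac{\chi}{(\chi-\mu)^{+}}\bigr)$, where the coefficient in front of $\int u^{p_{0}+1}$ is $-(p_{0}\mu-(p_{0}-1)\chi)$. The hypothesis $\mu>\chi/2$ is precisely what makes this interval nonempty, so it costs nothing to take $p_{0}>2$, and doing so buys a one-step bootstrap in place of your iteration: with $p_{0}>2$ the elliptic estimate (\ref{w}) gives $\|\nabla v\|_{\infty}\le C\|u\|_{p_{0}}$ directly, and the stochastic convolution bound (\ref{Mt}) is applicable with $q=p_{0}/2>1$, whereas at $p_{0}=2$ both of these fail (the elliptic estimate needs $r>2$, and (\ref{Mt}) needs $1/r+1/q<1$). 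Your proposed iteration $L^{2}\to L^{p_{1}}\to\cdots\to L^{\infty}$ is plausible but would require semigroup smoothing estimates between different $L^{p}$ spaces that the paper does not state, and handling the stochastic convolution along such an iteration is genuinely more delicate; the paper sidesteps all of this by the simple device of choosing $p_{0}$ slightly above~$2$. A minor further difference: the paper obtains nonnegativity by linearising the equation and invoking a comparison result of Krylov, rather than by applying It\^{o}'s formula to $(u^{-})^{p}$.
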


Next we consider stochastic chemotaxis system driven by a class of nonlinear noise:

\begin{equation}\label{main2}
\left\{
    \begin{array}{lr}
du = \Delta u \, dt - \nabla \cdot (\chi u \nabla v)  \, dt + g(u) \, dt +  \sum_{i=1}^{\infty}b_i\|u\|_q^ru \,  \, dW^i_t, \quad  \\
0 = \Delta v + u - v, \quad
\end{array}
\right.
\end{equation}
with the same initial-boundary condition:
\begin{equation*}
	\left\{
	\begin{array}{lr}
		\frac{{\partial }}{{\partial \boldsymbol{n}}}u  = 0, \quad \frac{{\partial }}{{\partial \boldsymbol{n}}}v = 0, &\text{for } x \in\partial \mathcal{O},\\
		u(x,0) = u_0(x), \quad &\text{for } x \in \mathcal{O}.\
	\end{array}
	\right.
\end{equation*}

Regarding the coefficients,  we introduce following conditions:

\vskip 0.3cm
\noindent{\bf (A-1)}.
	\begin{itemize}
    \item[\textbullet] $\sum_{i=1}^\infty b_i^2< \infty$
    \item[\textbullet] $r> \frac{2\vee n-1}{2}$, $q\geq 2r$ and $q> \frac{2(2\vee n-1)r}{2r-2\vee n+1}$
\end{itemize}
\vskip 0.3cm
\noindent {\bf (A-2)}.
\begin{align}\label{g'}
    g(0)\geq 0,\ \ \text{and}\ \  |g(s)| \leq c_2 + \mu' |s|^n, \quad \forall s \geq 0,
\end{align}
where $c_2$, $\mu'$ and $n$ are positive constants.

%
\vskip 0.4cm
The mild solution  $(u,v)$ of the system (\ref{main2}) can be defined analogously as (\ref{main}).
\vskip 0.3cm
 The following is the second main result.
 \begin{thm}\label{Thm main2}
	Suppose the assumptions {\bf (A-1)}, {\bf (A-2)} are in place. Then for any nonnegative $u_0\in C(\bar{\mathcal{O})}$, the system (\ref{main2}) possesses a unique
nonnegative  mild solution $(u,v)$.
\end{thm}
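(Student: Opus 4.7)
The overall strategy mirrors the one used for Theorem~\ref{Thm main}: first construct a local maximal solution through truncation and a fixed-point argument, then rule out finite-time explosion by means of a uniform a priori estimate derived from the $L^p$ It\^o formula of Section 4. The key novelty is that now the source $g$ is allowed to be super-linear (of polynomial degree $n$), while the noise itself carries the super-linear factor $\|u\|_q^r u$. The whole argument hinges on the fact that this super-linear noise generates, through the It\^o correction of a suitable concave functional, a stabilizing term strong enough to tame both $g$ and the chemotactic cross-term.

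\medskip

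\noindent\emph{Step 1: Truncated local solution.} Since the mapping $u \mapsto (b_i\|u\|_q^r u)_i$ is neither of linear growth nor globally Lipschitz from $C(\bar{\mathcal{O}})$ into $\ell^2$, the contraction scheme of Section 3 does not apply directly. I would replace $\|u\|_q^r$ by the truncated coefficient $\phi_R(\|u\|_q)\|u\|_q^r$, where $\phi_R$ is a smooth cutoff equal to $1$ on $[0,R]$ and supported in $[0,R+1]$. The resulting coefficient is Lipschitz and of linear growth in $u$, with constants depending on $R$, so the framework of Section 3 yields a unique global mild solution $u^R$ of the truncated equation. With the stopping times
\begin{equation*}
\tau_R = \inf\bigl\{t \geq 0 : \|u^R(t)\|_q > R\bigr\},
\end{equation*}
the family $(u^R)_{R \geq 1}$ is consistent on $[0,\tau_R]$, and setting $u := u^R$ on $[0,\tau_R]$ defines a maximal mild solution on $[0,\tau_\infty)$ with $\tau_\infty := \lim_{R \to \infty}\tau_R$.

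\medskip

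\noindent\emph{Step 2: Uniform a priori estimate.} To prove $\tau_\infty = \infty$ a.s.\ I would apply the $L^p$ It\^o formula of Section 4 to $F(u^R(t)) := \log\bigl(1 + \|u^R(t)\|_p^p\bigr)$ with $p$ chosen according to (A-1). Because $\log(1+\cdot)$ is concave, the quadratic-variation correction produces the negative contribution
\begin{equation*}
-\frac{p^2}{2}\biggl(\sum_i b_i^2\biggr)\frac{\|u^R\|_q^{2r}\,\|u^R\|_p^{2p}}{\bigl(1+\|u^R\|_p^p\bigr)^2}\,dt,
\end{equation*}
which for large $\|u^R\|_p$ dominates the positive It\^o correction coming from the drift of $\|u^R\|_p^p$ itself, yielding a net stabilization of order $-\|u^R\|_q^{2r}\,dt$. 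On the drift side, the Laplacian contributes a non-positive gradient term, the chemotactic part (after integration by parts together with $-\Delta v + v = u$) gives at most $\chi(p-1)\|u^R\|_{p+1}^{p+1}/(1+\|u^R\|_p^p)$, and (A-2) bounds the contribution of $g$ by $C(1+\|u^R\|_{p+n-1}^{p+n-1})/(1+\|u^R\|_p^p)$. An auxiliary bound on $\mathbb{E}\|u^R\|_1$, obtained by testing the equation against the constant $1$ and using (A-2), feeds interpolation between $L^1$ and $L^q$ so that each destabilizing contribution can be absorbed into the stabilizing $\|u^R\|_q^{2r}$ term whenever $(r,q)$ satisfies (A-1).

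\medskip

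\noindent\emph{Main obstacle.} The crux of the argument is the sharp balancing of exponents: the inequalities $r > (2\vee n - 1)/2$, $q \geq 2r$ and $q > 2(2\vee n - 1)r/(2r - 2\vee n + 1)$ are exactly what is needed in order that interpolation deliver bounds of the form $\varepsilon\|u\|_q^{2r} + C_\varepsilon$ for \emph{both} the chemotactic excess $\|u\|_{p+1}^{p+1}/(1+\|u\|_p^p)$ and the logistic excess $\|u\|_{p+n-1}^{p+n-1}/(1+\|u\|_p^p)$ simultaneously. Once the absorption closes, a Gronwall argument yields $\sup_{t \in [0,T]}\mathbb{E}\log\bigl(1+\|u^R(t)\|_p^p\bigr) \leq C(T)$ uniformly in $R$, and a standard Chebyshev / stopping-time argument (combined with the embedding $L^p \hookrightarrow L^q$ on the bounded domain $\mathcal{O}$ for $p$ chosen $\geq q$) produces $\mathbb{P}(\tau_R \leq T) \to 0$ as $R \to \infty$. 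Hence $\tau_\infty = \infty$ a.s., and the maximal local solution is global.
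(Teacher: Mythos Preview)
Your high-level strategy---localize, apply the It\^o formula to $\log(1+\|u\|_p^p)$, and exploit the negative quadratic-variation correction coming from the super-linear noise---is correct and is exactly what the paper does. However, the mechanism you propose for closing the estimate in Step~2 contains a genuine gap. The auxiliary $L^1$ bound is not available: under \textbf{(A-2)} the source may behave like $g(s)=+\mu' s^n$, so testing against the constant $1$ yields $d\|u\|_1 \le (c_2|\mathcal{O}|+\mu'\|u\|_n^n)\,dt + dM_t$, which does not close unless $n=1$. More fundamentally, interpolation between $L^1$ and $L^q$ can never reach the destabilizing norms $\|u\|_{p+1}^{p+1}$ and $\|u\|_{p+n-1}^{p+n-1}$ once their exponents exceed $q$, and the paper takes $p=q$. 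What actually makes the argument close is the gradient term you proposed to discard as ``non-positive'': the paper keeps $q(q-1)\int_0^t \|\nabla u^{q/2}\|_2^2/(1+\|u\|_q^q)\,ds$ on the good side and uses the two-dimensional Gagliardo--Nirenberg inequality to write
\[
\|u\|_{q-1+2\vee n}^{\,q-1+2\vee n}\ \le\ C\,\|u^{q/2}\|_{W^{1,2}}^{\,2(2\vee n-1)/q}\,\|u\|_q^{q}\ +\ C\,\|u\|_q^{\,q-1+2\vee n},
\]
after which Young's inequality splits the first piece into $\varepsilon\|\nabla u^{q/2}\|_2^2+C_\varepsilon\|u\|_q^{q^2/(q-2\vee n+1)}$. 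The third inequality in \textbf{(A-1)} is precisely the condition $q^2/(q-2\vee n+1)<q+2r$ that lets this remainder (together with $\|u\|_q^{q-1+2\vee n}$, since $2\vee n-1<2r$) be absorbed by the noise damping $\|u\|_q^{2r}$. So the absorption is simultaneously into the dissipation \emph{and} the noise correction; neither suffices alone.

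Two smaller points. In Step~1, truncating only the noise factor does not give a global solution of the modified equation, because under \textbf{(A-2)} the untruncated drift (chemotaxis plus a possibly positive $g$ of degree $n$) can itself blow up; the paper truncates all nonlinearities via $\theta_m(\|u\|_{\Upsilon^u_t})$ and bases the stopping times on $\|u\|_\infty$. And since the solution must live in $C(\bar{\mathcal{O}})$, an $L^q$ tail bound alone does not rule out explosion; the paper has a separate step (the analogue of Lemma~\ref{lem 4.10}) that feeds the $L^q$ estimate back through the mild formulation and semigroup smoothing to control $\mathbb{E}\sup_{t\le T\wedge\tau}\|u(t)\|_\infty^\gamma$ for some small $\gamma>0$, and only then concludes $\tau=\infty$.
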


The proof of the above result is given in Section 5.

\section{Existence and Uniqueness of Local Solutions}\label{section3}
\setcounter{equation}{0}
In this section, we will show that there exists a unique local solution to the stochastic chemotaxis system.
We first state a lemma which  provides estimates for the stochastic convolution $M_t: =\int_0^t e^{-(t-s)A} h(s) \,  \, dW_s$, which will be frequently used. A similar result was proved  in \cite{DG} where no boundary conditions are imposed. However, the proofs there can be adapted to the current situation with Neumann boundary condition. We omit the proof here.
\begin{lem}\label{M}
	Suppose $h=(h_k)_{k=1}^\infty$, is an $\ell^2$-valued
progressively measurable function on $\Omega\times[0,T]\times\mathcal{O}$ such that $$\mathbb{E}\|\|h\|_{\ell^2}\|_{2,2,T}^2< \infty.$$
 Then $M(t)=\int_0^te^{-(t-s)A}h(s) \, dW_s$ is the solution to the following SPDE:
	\begin{equation*}
		\begin{cases}
dM(t) = -A M(t)  \, dt + h(t) \,  \, dW_t, \\
M(0) = 0,
\end{cases}
	\end{equation*}
	in $L^2(\mathcal{O})$, and has the $L^\infty$-regularity:
	\begin{align}\label{Mt}
		\mathbb{E} \sup_{t\in[0,T\wedge\tau]}\|M(t)\|_\infty^\eta\leq C(1+T)^{C'}\mathbb{E}\|\|h\|_{\ell^2}\|_{2r,2q,T\wedge\tau}^\eta.
	\end{align}
for any $\eta>0$, $r,q\in(1,\infty]$ with $\frac{1}{r}+\frac{1}{q}<1$ and stopping time $\tau$. The constants $C,C'$ may depend on $\eta, r, q $.
\end{lem}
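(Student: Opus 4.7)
The plan is the Da Prato--Kwapie\'n--Zabczyk factorization trick, combined with the vector-valued Burkholder inequality in $L^{q_1}$-spaces and a pointwise $\ell^2$-bound for the heat semigroup that exploits positivity and mass-conservation of the Neumann kernel. The stopping time case reduces to the deterministic endpoint $T$ by replacing $h$ with $h\,\mathbf{1}_{[0,\tau]}$, since the corresponding stochastic convolution then agrees with $M$ on $[0,T\wedge\tau]$.

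Fix $\alpha\in(0,1/2)$ and auxiliary exponents $p_1,q_1$ with $q_1\geq 2\vee 2q$ to be chosen later. Using the beta-function identity $c_\alpha\int_\sigma^t (t-s)^{\alpha-1}(s-\sigma)^{-\alpha}\,ds=1$ and a stochastic Fubini argument (which adapts from \cite{DG} with no change under Neumann boundary conditions), rewrite
\begin{equation*}
M(t) = c_\alpha \int_0^t (t-s)^{\alpha-1}\,e^{-(t-s)A}Y(s)\,ds,\qquad Y(s):=\int_0^s (s-\sigma)^{-\alpha}\,e^{-(s-\sigma)A}h(\sigma)\,dW_\sigma.
\end{equation*}
The embedding $D((A+1)^\beta)\hookrightarrow C(\bar{\mathcal{O}})$ for $\beta>1/q_1$ combined with (\ref{A1}) yields $\|e^{-tA}\omega\|_\infty\leq C t^{-1/q_1-\epsilon}e^{-\nu_1 t}\|\omega\|_{q_1}$; substituting this inside the factorization formula and applying H\"older in $s$ with conjugate pair $(p_1,p_1')$ gives the pathwise bound
\begin{equation*}
\sup_{t\leq T}\|M(t)\|_\infty\leq C(1+T)^{C_1}\,\|Y\|_{L^{p_1}(0,T;L^{q_1}(\mathcal{O}))},
\end{equation*}
provided $\alpha>1/p_1+1/q_1+\epsilon$.

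To control $Y$, the Burkholder inequality for $L^{q_1}$-valued stochastic integrals (applicable since $L^{q_1}$ is $2$-smooth for $q_1\geq 2$) gives
\begin{equation*}
\mathbb{E}\|Y(s)\|_{q_1}^\eta \leq C\,\mathbb{E}\left\|\left(\int_0^s (s-\sigma)^{-2\alpha}\|e^{-(s-\sigma)A}h(\sigma)(\cdot)\|_{\ell^2}^2\,d\sigma\right)^{1/2}\right\|_{q_1}^\eta.
\end{equation*}
Because the Neumann heat kernel $K_t(x,y)$ is nonnegative and $\int K_t(x,y)\,dy=1$ (as $e^{-tA}\mathbf{1}=\mathbf{1}$), a pointwise Cauchy--Schwarz gives the crucial $\ell^2$-bound $\|(e^{-tA}h)(x)\|_{\ell^2}^2\leq (e^{-tA}\|h(\cdot)\|_{\ell^2}^2)(x)$. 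Setting $F(\sigma):=\|h(\sigma)\|_{\ell^2}^2$ and passing to $L^{q_1/2}$-norms by Minkowski, the smoothing $\|e^{-tA}\|_{L^q\to L^{q_1/2}}\leq Ct^{-(1/q-2/q_1)}e^{-\nu_1 t}$ (valid since $q_1\geq 2q$) together with H\"older in time with exponents $(r,r')$ yields
\begin{equation*}
\left\|\int_0^s (s-\sigma)^{-2\alpha}e^{-(s-\sigma)A}F(\sigma)\,d\sigma\right\|_{q_1/2}\leq C(1+T)^{C_2}\,\|\|h\|_{\ell^2}\|_{2r,2q,T}^2
\end{equation*}
as soon as $2\alpha+1/q-2/q_1<1/r'$. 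Raising to the $\eta/2$-power, taking expectation, integrating in $s$, and combining with the previous step delivers the claim for $\eta\geq 2$; the case $\eta\in(0,2)$ is recovered via Lenglart's domination inequality.

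The admissible range $1/p_1+1/q_1<\alpha<1/2-1/(2r)-1/(2q)+1/q_1$ for $\alpha$ is non-empty iff $1/p_1<1/2-1/(2r)-1/(2q)$, and this is the only place where the hypothesis $1/r+1/q<1$ is used: it makes the right-hand side strictly positive, so $p_1$ can be taken sufficiently large. The main technical obstacle is the simultaneous balancing of the four H\"older exponents $(p_1,q_1,r,r')$ with the factorization parameter $\alpha$; the positivity and mass-conservation of the Neumann heat kernel are essential because, without them, the $\ell^2$-norm inside the $L^{q_1}$-norm of the BDG quadratic variation could not be reduced to a scalar semigroup action on $\|h\|_{\ell^2}^2$, and hence to the $L^{2r}_tL^{2q}_x$-norm appearing on the right-hand side of the lemma.
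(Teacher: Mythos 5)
The paper itself contains no proof of Lemma \ref{M}: it only remarks that the result follows by adapting the argument of Dareiotis and Gerencs\'er \cite{DG} --- a stochastic version of Moser's iteration run on the norms $\|M\|_{2^n}$ at the level of the weak formulation --- to the Neumann boundary condition. Your argument is therefore correct in substance but takes a genuinely different route: the Da Prato--Zabczyk factorization, the Burkholder inequality in the $2$-smooth space $L^{q_1}$, and the pointwise domination $\|(e^{-tA}h)(x)\|_{\ell^2}^2\le (e^{-tA}\|h\|_{\ell^2}^2)(x)$, which correctly exploits positivity and conservation of mass of the Neumann heat kernel. The exponent bookkeeping is right: the constraints $\alpha>1/p_1+1/q_1+\epsilon$ and $2\alpha+1/q-2/q_1<1/r'$ are compatible precisely because $1/r+1/q<1$, which is also where this hypothesis enters in \cite{DG}. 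What your route buys is a self-contained proof using only the semigroup estimates already recorded in Lemma 2.1 of the paper; what the iteration of \cite{DG} buys is robustness (it works for divergence-form operators with merely measurable coefficients and includes a free drift term), which is irrelevant for the constant-coefficient Laplacian needed here. Three small repairs to your write-up: (i) the direct argument yields (\ref{Mt}) for $\eta\ge p_1$ rather than $\eta\ge 2$, since passing $\mathbb{E}$ inside the $L^{p_1}(0,T;L^{q_1})$-norm of $Y$ by Minkowski's integral inequality requires $\eta/p_1\ge 1$; this is harmless because the Lenglart domination step you already invoke, applied with the increasing adapted process $A_t=C(1+T)^{C'}\|\|h\|_{\ell^2}\|_{2r,2q,t}^{p_1}$ and the stopping-time version of the estimate obtained from the $h\mathbf{1}_{[0,\tau]}$ reduction, then covers all $0<\eta<p_1$. (ii) The factor $e^{-\nu_1 t}$ should not be attached to the $L^{q}\to L^{q_1/2}$ smoothing bound, since $e^{-tA}\mathbf{1}=\mathbf{1}$ under Neumann conditions; replacing it by $C(1+t^{-(1/q-2/q_1)})$ costs only a power of $(1+T)$, which the statement allows. (iii) You do not address the first assertion of the lemma, that $M$ solves the SPDE in $L^2(\mathcal{O})$; this is the standard mild/weak equivalence via the stochastic Fubini theorem, parallel to the computation in Proposition \ref{prop}, and should at least be mentioned.
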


\subsection{Existence of Local Solutions}\label{subsection4.1}
Observe that once we have $u(t)\in C(\bar{\mathcal{O}})$, we can conclude that $v\in W^{1,\infty}$ using the regularity of the elliptic equation (\ref{w}).
Introduce the following space
$$
\Upsilon^u_t:=L^\infty([0,t],C(\bar{\mathcal{O}}))
$$
with the corresponding norm given by
$$
\|u\|_{\Upsilon^u_t}=\sup_{s\in[0,t]}\|u(s)\|_{\infty}.
$$

\begin{defn}\label{def local solution}
	We say that $(u,v,\tau)$ is a local mild solution to the system (\ref{main}) if
	\begin{enumerate}
    \item$\tau$ is a stopping time, and $(u, v)$ is a progressively measurable stochastic process that takes values in $C(\bar{\mathcal{O}})\times W^{1,\infty}(\mathcal{O})$.

    \item There exists a non-decreasing sequence of stopping times $\{\tau_l, l \geq 1\}$ such that $\tau_l \uparrow \tau$ almost surely as $l \to \infty$, and for each $l$, the pair $(u(t), v(t))$ satisfies:
   \begin{align*}
    u(t) &= e^{-tA} u_0 - \int_0^{t} e^{-(t - s) A} \left( \nabla \cdot \left( \chi u(s) \nabla v(s) - g(u(s)\right)\right)\, ds
    + \int_0^t  e^{-(t - s) A} \sigma(u(s)) \, dW_s,\\
    v(t) &= G * u(t),
    \end{align*}
    for all $t \leq \tau_l$.
\end{enumerate}
\end{defn}


\begin{thm}\label{thm l}
Under the assumption ({\bf H-1}), for any $g \in C^{1}([0,\infty))$ and nonnegative $u_0\in C(\bar{\mathcal{O})}$,	there exists a nonnegative local mild solution to the system (\ref{main}).
\end{thm}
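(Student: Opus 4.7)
The plan is to construct a local mild solution via a truncation plus Banach fixed-point scheme, patch the truncated solutions along an increasing sequence of stopping times, and extract nonnegativity at the end from an It\^o-formula argument on the negative part.

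For each $R\ge 1$ I would fix a smooth cutoff $\theta_R\colon[0,\infty)\to[0,1]$ with $\theta_R\equiv 1$ on $[0,R]$ and $\mathrm{supp}\,\theta_R\subset[0,R+1]$. Given a progressively measurable $u\in\Upsilon^u_T$, set $v:=G*u$, so that by (\ref{w}) $\|\nabla v(s)\|_\infty\le C\|u(s)\|_\infty$ (pick any $r>2$ and use $L^\infty\hookrightarrow L^r$ on $\mathcal{O}$). Define the truncated map
\begin{align*}
\Phi_R(u)(t):={}&e^{-tA}u_0-\int_0^t e^{-(t-s)A}\nabla\!\cdot\!\bigl[\theta_R(\|u(s)\|_\infty)\chi u(s)\nabla v(s)\bigr]\,ds\\
&+\int_0^t e^{-(t-s)A}\theta_R(\|u(s)\|_\infty)g(u(s))\,ds+\int_0^t e^{-(t-s)A}\sigma(u(s))\,dW_s,
\end{align*}
and show that for $T=T(R,\|u_0\|_\infty)$ small enough it is a contraction on a ball of $\Upsilon^u_T$. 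Picking $p>2$ and $\beta\in(1/p,1/2)$, the embedding $D((A+1)^\beta)\hookrightarrow C(\bar{\mathcal{O}})$ together with (\ref{A4}) controls the chemotactic term (the kernel $(t-s)^{-1/2-\beta-\epsilon}$ is integrable in $s$, and $\|\theta_R(\|u\|_\infty)\chi u\nabla v\|_p\le CR^2$ on the ball). The logistic term is handled by (\ref{A1}) together with the $C^1$ regularity of $g$ on $[0,R+1]$, and the stochastic convolution by Lemma \ref{M} together with (H-1). All three estimates carry a positive power of $T$, so the Banach contraction principle produces a unique fixed point $u^R$.

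Next I would set $\tau_R:=\inf\{t\in[0,T]:\|u^R(t)\|_\infty\ge R\}$ (with $\inf\emptyset=T$). On $[0,\tau_R]$ the cutoff is identically $1$ and $u^R$ satisfies the untruncated mild equation; pathwise uniqueness for the truncated problem (a byproduct of the same Lipschitz estimates) yields $u^{R+1}\equiv u^R$ on $[0,\tau_R]$, so $\tau_R$ is nondecreasing in $R$. Setting $u(t):=u^R(t)$ on $[0,\tau_R]$, $v:=G*u$, and $\tau:=\lim_R\tau_R$ produces the pair required by Definition \ref{def local solution}.

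The hard part will be nonnegativity, because mild solutions are not a priori regular enough to support It\^o's formula. My approach is to regularize the truncated SPDE (for instance, spectral Galerkin in the noise together with mollification of the chemotactic drift), apply an $L^2$ It\^o formula --- a special case of the $L^p$ formula announced in the abstract --- to the squared negative part $\|u^R_-(t)\|_2^2$ of the regularized solution, and pass to the limit. Since $\sigma(0)=0$ by (H-1), the stochastic contribution vanishes on $\{u\le 0\}$; integration by parts of the chemotactic term produces a contribution dominated by $C_R\int_0^t\|u^R_-(s)\|_2^2\,ds$ via the bound $\|\nabla v\|_\infty\le CR$, and the $g$-term is absorbed after extending $g$ to $\mathbb{R}$ via $\tilde g(s):=g(s\vee 0)$ and exploiting its Lipschitz continuity on bounded sets. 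A Gronwall inequality then forces $u^R_-\equiv 0$ on $[0,\tau_R]$, and letting $R\to\infty$ yields nonnegativity on $[0,\tau)$.
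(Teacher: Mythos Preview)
Your fixed-point construction follows the same truncation-plus-contraction scheme as the paper, but two points need tightening. First, the contraction must be set up in a space carrying an expectation---the paper uses $S_T$ with norm $\|u\|_{S_T}=\mathbb{E}\|u\|_{\Upsilon^u_T}$---since Lemma~\ref{M} bounds the stochastic convolution only in moments, not pathwise. Second, with the cutoff in place the truncated nonlinearities are globally Lipschitz with constants depending only on the cutoff level, so the contraction holds on all of $S_T$ and the small time $T_m$ depends on $R$ alone, \emph{not} on $\|u_0\|_\infty$. This matters: the paper uses that independence to iterate on $[T_m,2T_m]$, $[2T_m,3T_m],\dots$ and obtain $u^R$ globally in time \emph{before} defining $\tau_R$. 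Without that extension your patching step ($u^{R+1}\equiv u^R$ on $[0,\tau_R]$) is in trouble, because $u^{R+1}$ is a priori defined only on the possibly shorter interval $[0,T(R+1)]$. The paper also truncates with the running supremum $\theta_m(\|u\|_{\Upsilon^u_s})$ rather than your instantaneous $\theta_R(\|u(s)\|_\infty)$; either works for the Lipschitz estimates, but the running version makes the case analysis slightly cleaner.

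For nonnegativity the paper takes a different and much shorter route: it rewrites the truncated equation as a \emph{linear} SPDE in $u$ with (random, $u$-dependent) coefficients $b=-\chi\theta_m\nabla v$, $c=\theta_m(\chi\Delta v+\tilde g(u))$, $\nu=1_{\{u\neq0\}}\sigma(u)/u$ and forcing $g(0)\ge 0$, and then invokes Krylov's comparison principle \cite[Theorem~5.12]{NK} directly. Your It\^o-formula argument on $\|u^R_-\|_2^2$ is a legitimate and more self-contained alternative (and your observation that extending $\sigma$ by $0$ on $(-\infty,0]$ kills the stochastic contribution is exactly right), but be aware that justifying It\^o's formula for the mild solution requires precisely the Yosida-approximation machinery the paper only develops later in Section~4; if you go this way you should either import that argument here or flag the forward reference.
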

\begin{proof}
The existence proof utilizes a cutoff procedure along with a contraction mapping argument. We modify the coefficients in system (\ref{main}), and consider the following SPDE:
\begin{equation}\label{cut}
du = \Delta u  \, dt - \theta_m(\|u\|_{\Upsilon^u_t})\nabla \cdot (\chi u \nabla (G * u))  \, dt + \theta_m(\|u\|_{\Upsilon^u_t})g(u) \, dt + \sigma(u) \,  \, dW_t, \quad
\end{equation}
Here $\theta_m(\cdot)=\theta(\frac{\cdot}{m})$ with $\theta\in C^2([0,\infty),[0,1])$ being a function such that:
\begin{itemize}
\item[(1)] $\theta(r)=1$,\ \ for\ $r \in [0, 1]$,
\item[(2)] $\theta(r)=0$,\ \ for\ $r > 2$,
\item[(3)] $\sup_{r\in[0,\infty)}|\theta'(r)|\leq C<\infty$.
	\end{itemize}

Let $S_T$ be the space of all $\{\mathcal{F}_t\}_{t\in[0,T]}$-adapted, $C(\bar{\mathcal{O}})$-valued  stochastic processes $u(t)$ such that
$$\|u\|_{S_T}:=\mathbb{E}\Big(\|u\|_{\Upsilon^u_T}\Big)<\infty.$$
Then $S_T$ equipped with the norm $\|\cdot\|_{S_T}$ is a Banach space.

	We claim that the map
 \begin{align*}
     \Phi(u)(t):=e^{-tA}u_0&-\int_0^te^{-(t-s)A}\theta_m(\|u\|_{\Upsilon^u_s})\big(\nabla\cdot(u\nabla (G*u))-g(u)\big)(s) \, ds\\
     &+\int_0^te^{-(t-s)A}\sigma(u(s)) \, dW_s,
 \end{align*}
 for $t\in [0, T ]$, is a contraction mapping from $S_T$ into itself if $T$ is sufficiently small.

To demonstrate this, we first note that for $u\in S_T$, the function $\Phi(u)$  takes values in $C(\bar{\mathcal{O}})$ because $u_0\in C(\bar{\mathcal{O}})$, $\sigma_k\in C(\mathbb{R})$, and $e^{-tA}$ is strongly continuous in $C(\bar{\mathcal{O}})$.

Consider some $p>2$, and fix $\beta\in (\frac{1}{p},\frac{1}{2})$. Since $D((A_p+1)^\beta)\hookrightarrow C(\bar{\mathcal{O}})$, we have
\begin{align*}
    	\|\Phi(u)(t)\|_\infty
    	\leq& \|e^{-tA}u_0\|_\infty+ C \int_0^t\|(A_p+1)^\beta e^{-(t-s)A}\theta_m(\|u\|_{\Upsilon^u_s})\nabla\cdot(u\nabla (G*u))(s)\|_{p} \, ds\\
    	&+\left\|\int_0^te^{-(t-s)A}\theta_m(\|u\|_{\Upsilon^u_s})g(u(s)) \, ds\right\|_\infty+\left\|\int_0^t e^{-(t-s)A}\sigma(u(s)) \, dW_s\right\|_{\infty}.
    \end{align*}
Namely,
\begin{align}\label{Phi}
\|\Phi(u)(t)\|_\infty \leq C \|u_0\|_\infty + I_1(t) + I_2(t) + I_3(t),
\end{align}
where
\begin{align*}
&I_1(t) = C \int_0^t\|(A_p+1)^\beta e^{-(t-s)A}\theta_m(\|u\|_{\Upsilon^u_s})\nabla\cdot(u\nabla (G*u))(s)\|_{p} \, ds,\\
&I_2(t) = \left\|\int_0^te^{-(t-s)A}\theta_m(\|u\|_{\Upsilon^u_s})g(u(s)) \, ds\right\|_\infty,\\
&I_3(t) = \left\|\int_0^t e^{-(t-s)A}\sigma(u(s)) \, dW_s\right\|_{\infty}.
\end{align*}

To estimate $I_1(t)$, we use the estimate (\ref{A4}) with $\epsilon \in (0, \frac{1}{2} - \beta)$ and (\ref{w}) to obtain
    \begin{align*}
        I_1(t)\leq C_{\epsilon}\int_0^t(t-s)^{-\beta-\frac{1}{2}-\epsilon}\theta_m(\|u\|_{\Upsilon^u_s})\|u(s)\|_\infty^2 \, ds\leq C_{\epsilon}\int_0^t(t-s)^{-\beta-\frac{1}{2}-\epsilon}m^2 \, ds\leq C_{\epsilon}m^2t^{\frac{1}{2}-\beta-\epsilon}.
    \end{align*}

Due to the parabolic maximum principle, we can bound $I_2(t)$ as follows
    \begin{align*}
        I_2(t)\leq \int_0^te^{-(t-s)A}g_m \, ds\leq g_mt,
    \end{align*}
 where $g_m=\sup_{0\leq s\leq m}|g(s)|$.

For the term $I_3(t)$, applying Lemma \ref{M} and choosing $\eta = 1$, $r = \infty$, and $q = \infty$ as in (\ref{Mt}), we get
\begin{align*}
    \mathbb{E}\sup_{t\in[0,T]} I_3(t)\leq C\mathbb{E}\sup_{t\in[0,T]}\|\|\sigma(u(t))\|_{\ell^2}\|_\infty\leq C\left(\mathbb{E}\sup_{t\in[0,T]}\|u(t)\|_\infty+1\right).
\end{align*}

Combining these estimates with (\ref{Phi}), we obtain
\begin{align}\label{PHI}
    \mathbb{E}\sup_{t\in[0,T]} \|\Phi(u)(t)\|_\infty\leq C_T\left(\|u_0\|_\infty+m^2+g_m+\mathbb{E}\sup_{t\in[0,T]} \|u(t)\|_\infty\right).
\end{align}

    This shows that $\Phi$ maps $S_T$ into itself.

Next, we will show that if $T > 0$ is sufficiently small, then $\Phi$ can be made a contraction on $S_T$.

Let $u_1,u_2\in S_T$. We have:
\begin{align}\label{Phi1-2}
    \|\Phi(u_1)(t)-\Phi(u_2)(t)\|_\infty
    \leq J_1(t)+J_2(t)+J_3(t),
\end{align}
where
\begin{align*}
&J_1(t) = \left\|\int_0^t e^{-(t-s)A}\nabla\cdot\left(\theta_m(\|u_1\|_{\Upsilon^u_s})u_1\nabla (G*u_1)-\theta_m(\|u_2\|_{\Upsilon^u_s})u_2\nabla (G*u_2)\right)(s) \, ds\right\|_{\infty},\\
&J_2(t) = \left\|\int_0^te^{-(t-s)A}\big(\theta_m(\|u_1\|_{\Upsilon^u_s})g(u_1(s))-\theta_m(\|u_2\|_{\Upsilon^u_s})g(u_2(s))\big) \, ds\right\|_\infty,\\
&J_3(t) = \left\|\int_0^t e^{-(t-s)A}\big(\sigma(u_1(s))-\sigma(u_2(s))\big) \, dW_s\right\|_{\infty}.
\end{align*}

Similar to the estimate of $I_1$, we have
\begin{align}\label{J1 1}
    J_1(t)&\leq C \int_0^t\left\|(A_p+1)^\beta e^{-(t-s)A}\nabla\cdot\left(\theta_m(\|u_1\|_{\Upsilon^u_s})u_1\nabla (G*u_1)-\theta_m(\|u_2\|_{\Upsilon^u_s})u_2\nabla (G*u_2)\right)
    (s)\right\|_{p} \, ds\nonumber\\
   &\leq C_{\epsilon}\int_0^t(t-s)^{-\beta-\frac{1}{2}-\epsilon}\left\|(\theta_m(\|u_1\|_{\Upsilon^u_s})u_1\nabla (G*u_1)-\theta_m(\|u_2\|_{\Upsilon^u_s})u_2\nabla (G*u_2))
    (s)\right\|_p \, ds.
\end{align}

Define:
$$H(s)=\left\|\left(\theta_m(\|u_1\|_{\Upsilon^u_s})u_1\nabla (G*u_1)-\theta_m(\|u_2\|_{\Upsilon^u_s})u_2\nabla (G*u_2)\right)(s)\right\|_p.$$

    If $\|u_1\|_{\Upsilon^u_s}\vee\|u_2\|_{\Upsilon^u_s}\leq 2m$, then
    \begin{align*}
        H(s)\leq& \left\|\left(\theta_m(\|u_1\|_{\Upsilon^u_s})u_1\nabla (G*u_1)-\theta_m(\|u_2\|_{\Upsilon^u_s})u_1\nabla (G*u_1)\right)(s)\right\|_p \nonumber\\
    &+\left\|\left(\theta_m(\|u_2\|_{\Upsilon^u_s})u_1\nabla (G*u_1)-\theta_m(\|u_2\|_{\Upsilon^u_s})u_2\nabla (G*u_2)\right)(s)\right\|_p \nonumber\\
    \leq &
    \left|\theta_m(\|u_1\|_{\Upsilon^u_s})-\theta_m(\|u_2\|_{\Upsilon^u_s})\right|\left\|u_1\nabla (G*u_1)
    \right\|_p+\left\|u_1\nabla (G*u_1)-u_2\nabla (G*u_2)\right\|_p\nonumber\\
    \leq &
    \frac{C}{m}(\|u_1-u_2\|_{\Upsilon^u_s})\|u_1\|_\infty\|G*u_1\|_{1,p}+\|u_1\|_\infty\|G*u_1-G*u_2\|_{1,p}+\|u_1-u_2\|_\infty\| G*u_2\|_{1,p}\nonumber\\
    \leq&
    Cm(\|u_1-u_2\|_{\Upsilon^u_s}).
    \end{align*}
Here, we have used the fact that $\sup_{r\in[0,\infty)}|\theta_m^{\prime}(r)|\leq \frac{C}{m}$.

  If $\|u_1\|_{\Upsilon^u_s}\wedge\|u_2\|_{\Upsilon^u_s}> 2m$, then:
  \begin{align*}
      H(s)=0.
  \end{align*}

  If $\|u_1\|_{\Upsilon^u_s}> 2m$ and $\|u_2\|_{\Upsilon^u_s}\leq 2m$, then:
  \begin{align*}
    H(s)=&\left\|\theta_m(\|u_2\|_{\Upsilon^u_s})u_2\nabla (G*u_2)\right\|_p\nonumber\\
      =&\left|\theta_m(\|u_1\|_{\Upsilon^u_s})-\theta_m(\|u_2\|_{\Upsilon^u_s})\right|\left\|u_2\nabla (G*u_2)\right\|_p\nonumber\\
      \leq&Cm(\|u_1-u_2\|_{\Upsilon^u_s}).
  \end{align*}

  Similarly, if $\|u_1\|_{\Upsilon^u_s}\leq 2m$ and $\|u_2\|_{\Upsilon^u_s}> 2m$, then:
  \begin{align*}
    H(s)\leq Cm\|u_1-u_2\|_{\Upsilon^u_s}.
  \end{align*}
  Combining the above estimates with (\ref{J1 1}), we get
 \begin{align}\label{J1}
     J_1(t)\leq Cm\|u_1-u_2\|_{\Upsilon^u_t}\int_0^t(t-s)^{-\beta-\frac{1}{2}-\epsilon} \, ds\leq Cm\|u_1-u_2\|_{\Upsilon^u_t}t^{\frac{1}{2}-\epsilon-\beta}.
 \end{align}

To estimate $J_2(t)$, we use inequality (\ref{A1}) to get
\begin{align}\label{J2 1}
    J_2(t) \leq & \int_0^t \left\| e^{-(t-s)A} \left( \theta_m(\|u_1\|_{\Upsilon^u_s}) g(u_1(s)) - \theta_m(\|u_2\|_{\Upsilon^u_s}) g(u_2(s)) \right) \right\|_\infty \, ds \nonumber\\
    \leq & Ct \sup_{s \in [0, t]} \left\| \theta_m(\|u_1\|_{\Upsilon^u_s}) g(u_1(s)) - \theta_m(\|u_2\|_{\Upsilon^u_s}) g(u_2(s)) \right\|_\infty.
\end{align}

Define
$$
G(s) = \left\| \theta_m(\|u_1\|_{\Upsilon^u_s}) g(u_1(s)) - \theta_m(\|u_2\|_{\Upsilon^u_s}) g(u_2(s)) \right\|_\infty.
$$

If $\|u_1\|_{\Upsilon^u_s} \vee \|u_2\|_{\Upsilon^u_s} \leq 2m$, then
\begin{align*}
    G(s) \leq & \left\| \theta_m(\|u_1\|_{\Upsilon^u_s}) g(u_1(s)) - \theta_m(\|u_2\|_{\Upsilon^u_s}) g(u_1(s)) \right\|_\infty \nonumber\\
    & + \left\| \theta_m(\|u_2\|_{\Upsilon^u_s}) g(u_1(s)) - \theta_m(\|u_2\|_{\Upsilon^u_s}) g(u_2(s)) \right\|_\infty \nonumber\\
    \leq & |\theta_m(\|u_1\|_{\Upsilon^u_s}) - \theta_m(\|u_2\|_{\Upsilon^u_s})| \left\| g(u_1(s)) - g(0) \right\|_\infty \nonumber\\
    & + \left\| g(u_1(s)) - g(u_2(s)) \right\|_\infty \nonumber\\
    \leq & \frac{C}{m} \|u_1 - u_2\|_{\Upsilon^u_s} \left(2m \sup_{r \in [-2m, 2m]} |g'(r)| + g(0)\right) \nonumber\\
    & + \sup_{r \in [-2m, 2m]} |g'(r)| \|u_1(s) - u_2(s)\|_\infty \nonumber\\
    \leq & C_m \|u_1 - u_2\|_{\Upsilon^u_s}.
\end{align*}
Here, we have used the fact that $\sup_{r \in [0, \infty)} |\theta_m'(r)| \leq \frac{C}{m}$ and $\sup_{r \in [-2m, 2m]} |g'(r)| \leq C_m$ since $g \in C^1$.

If $\|u_1\|_{\Upsilon^u_s} \wedge \|u_2\|_{\Upsilon^u_s} > 2m$, then
\begin{align*}
    G(s) = 0.
\end{align*}

If $\|u_1\|_{\Upsilon^u_s} > 2m$ and $\|u_2\|_{\Upsilon^u_s} \leq 2m$, then
\begin{align*}
    G(s) = & \left\| \theta_m(\|u_2\|_{\Upsilon^u_s}) g(u_2(s)) \right\|_\infty \nonumber\\
    = & |\theta_m(\|u_1\|_{\Upsilon^u_s}) - \theta_m(\|u_2\|_{\Upsilon^u_s})| \left\| g(u_2(s)) - g(0) \right\|_\infty \nonumber\\
    \leq & C_m \|u_1 - u_2\|_{\Upsilon^u_s}.
\end{align*}

Similarly, if $\|u_1\|_{\Upsilon^u_s} \leq 2m$ and $\|u_2\|_{\Upsilon^u_s} > 2m$, we have
\begin{align*}
    G(s) \leq C_m \|u_1 - u_2\|_{\Upsilon^u_s}.
\end{align*}

Combining the above estimates, and substituting into (\ref{J2 1}), we obtain
\begin{align}\label{J2}
    J_2(t) \leq C_m t \|u_1 - u_2\|_{\Upsilon^u_t}.
\end{align}

For $J_3(t)$, we apply Lemma \ref{M} with $\eta = 1$, $r = \frac{3}{2}$, and $q =\infty$ in (\ref{Mt}) to get
\begin{align}\label{J3}
    \mathbb{E} \sup_{t \in [0, T]} J_3(t) & \leq C (1 + T)^{C'} \mathbb{E}\left(\int_0^T\|\|\sigma(u_1(s)) - \sigma(u_2(s))\|_{\ell^2}\|_{\infty}^3 \right)^{\frac{1}{3}} \nonumber \\
    & \leq C T^\frac{1}{3}(1 + T)^{C'} \mathbb{E} \sup_{s \in [0, T]} \|u_1(s) - u_2(s)\|_\infty,
\end{align}
where $C$ and $C'$ are constants independent of $T$. Combining (\ref{Phi1-2}), (\ref{J1}), (\ref{J2}), and (\ref{J3}), we obtain
\begin{align*}
    \mathbb{E} \sup_{t \in [0, T]} \|\Phi(u_1)(t) - \Phi(u_2)(t)\|_\infty & \leq C_m \mathbb{E} \|u_1 - u_2\|_{\Upsilon^u_T} \left(T^{\frac{1}{2} - \epsilon - \beta} + T^{\frac{1}{3}} (1 + T)^{C'} + T \right),
\end{align*}
with $C_m$ and $C'$ independent of $T$. Therefore, there exist constants $\rho$ and $C_m > 0$ such that
\begin{align*}
    \|\Phi(u_1) - \Phi(u_2)\|_{S_T} & \leq C_m T^\rho \|u_1 - u_2\|_{S_T},
\end{align*}
with $C_m$ independent of $T$.

Choose $T = T_m$ such that $C_m T_m^\rho = \frac{1}{2}$. Then, $\Phi$ is a contraction on the space $S_{T_m}$. Applying the Banach fixed point theorem, we conclude that there exists a unique solution  $u_m $ to equation (\ref{cut}) in the space $u_m \in S_{T_m}$.

Let $S^1_T$ be the space of all $\{\mathcal{F}_t\}_{t \in [0, T]}$-adapted, $C(\bar{\mathcal{O}})$-valued stochastic processes $u(t)$ for $t \geq 0$ such that
$$
\|u\|_{S^1_T}^2 := \mathbb{E} \left( \|u\|_{\Upsilon^u_T}^2 \right) < \infty,
$$
and
$$
u = u_m \text{ on } [0, T_m], \quad \mathbb{P}\text{-a.s.}.
$$
Then, $(S^1_T, \|\cdot\|_{S^1_T})$ is a Banach space.

We introduce a mapping $\Phi^1$ on $S^1_T$ by defining
\begin{align*}
    \Phi^1(u)(t + T_m) := e^{-tA} u_m(T_m) & - \int_{T_m}^{T_m + t} e^{-(T_m + t - s)A} \theta_m(\|u\|_{\Upsilon^u_s}) \left(\nabla \cdot (u \nabla G * u) - g(u)\right) (s) \, ds \\
    & + \int_{T_m}^{T_m + t} e^{-(T_m + t - s)A} \sigma(u(s)) \, dW_s.
\end{align*}
Note that the constant $T_m$ does not depend on the initial datum. By repeating the above arguments, we can solve the cut-off equation (\ref{cut}) for $t \in [T_m, 2T_m]$, $[2T_m, 3T_m]$, \dots, and ultimately obtain a unique solution $u_m \in S_T$ of equation (\ref{cut}) for any $T > 0$.

Define the stopping time
\begin{align}\label{eq stopping time 1}
    \tau_m = \inf \left\{ t > 0 \mid \|u_m\|_{\Upsilon^u_t} \geq m \right\}.
\end{align}
The stopping time $\tau_m$ is well-defined. When $m \gg \|u_0\|_\infty$, we have
$$
\mathbb{P}(\tau_m > 0) = 1.
$$

By the definition of $\theta_m$, it follows that $(u_m(t))_{t \in [0, \tau_m)}$ is a local mild solution to the system (\ref{main}).

To prove the nonnegativity, consider the function $g$ expressed as $g(s) = g(0) + s \tilde{g}(s)$, where $\tilde{g}(s)$ is defined by:
\begin{align*}
    \tilde{g}(s) = \begin{cases}
        g'(0), & \text{if } s = 0, \\
        \frac{g(s) - g(0)}{s}, & \text{if } s > 0.
    \end{cases}
\end{align*}

Define $b(t, x) = -\chi \theta_m(\|u\|_{\Upsilon^u_s}) \nabla G * u(t, x)$, $c(t, x) = \theta_m(\|u\|_{\Upsilon^u_s}) \left( \chi \Delta G * u(t, x) + \tilde{g}(u(t, x)) \right)$, and $\nu(t, x) = I_{\{u(t, x)\not =0\}} \frac{\sigma(u(t, x))}{u(t, x)}$. Then, $u(t, x)$ can be viewed as the solution of the stochastic partial differential equation:
\begin{align*}
    dU = \Delta U \, dt + (b(t) \nabla U + c(t) U + g(0)) \, dt + U \nu(t) \, dW_t,
\end{align*}
where $g(0) \geq 0$. According to \cite[Theorem 5.12]{NK}, it follows that $u(t, x) \geq 0$ almost surely if $u_0(x) \geq 0$.
\hfill $\Box$\end{proof}
\subsection{Uniqueness of Local Solutions}
We recall the following generalized Gronwall-Bellman inequality(Theorem 2.1 and Corollary 2.1 in \cite{W}):

\begin{lem} \label{GGI}
    Let $0 <\alpha< 1$ and consider the time interval $I = [0, T)$, where $T \leq \infty$. Suppose $a(t)$ is a
nonnegative function that is locally integrable on $I$, and $b(t)$ and $g(t)$ are nonnegative, nondecreasing
continuous functions defined on $I$ and bounded by a positive constant $M$. If $f(t)$ is nonnegative
and locally integrable on $I$, and satisfies
\begin{align*}
    f(t)\leq a(t)+b(t)\int_0^tf(s) \, ds+g(t)\int_0^t(t-s)^{\alpha-1}f(s) \, ds,
\end{align*}
then
\begin{align*}
    f(t)\leq a(t)+&\sum_{n=1}^\infty\sum_{i=0}^n\frac{n!(b(t))^{n-i}(g(t)\Gamma(\alpha_1))^i}{i!(n-i)!\Gamma(i\alpha+n-i)}\times\int_0^t(t-s)^{i\alpha+n-i-1}a(s) \, ds,
\end{align*}
where $\Gamma(\cdot)$ is the Gamma function. Moreover, if $a(t)$ is nondecreasing on $I$. Then
\begin{align*}
    f(t)\leq a(t)E_{\alpha}(g(t)\Gamma(\alpha)t^{\alpha})e^{\frac{b(t)t}{\alpha}}.
\end{align*}
Here, the Mittag-Leffler function  $E_{\varrho}(z)$ is  defined by $E_{\varrho}(z)=\sum\limits_{k=0}^\infty \frac{z^k}{\Gamma (k\varrho+1)}$ for $z > 0$, and $E_{\varrho}(z)<\infty$ when  $\varrho>0$.
\end{lem}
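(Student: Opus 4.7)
The plan is to recast the inequality as an operator iteration. Set
$$\mathcal{B}\phi(t) := b(t)\int_0^t \phi(s)\,ds + g(t)\int_0^t (t-s)^{\alpha-1}\phi(s)\,ds,$$
so that the hypothesis reads $f\leq a+\mathcal{B}f$. Substituting this bound into itself $N$ times yields
$$f(t) \leq \sum_{n=0}^{N-1}\mathcal{B}^n a(t) + \mathcal{B}^N f(t),\qquad N\geq 1,$$
and the whole argument reduces to (i) an explicit pointwise bound for $\mathcal{B}^n a$ and (ii) showing that the remainder $\mathcal{B}^N f(t)\to 0$ as $N\to\infty$.

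For (i) I would argue by induction on $n$. At each step the coefficients $b(s),g(s)$ appearing at intermediate times $s\le t$ are replaced by $b(t),g(t)$ using monotonicity, which reduces the iterated integrals to convolutions of the two kernels $1$ and $s^{\alpha-1}$. The key computation is the Beta identity
$$\int_r^t (t-s)^{\beta-1}(s-r)^{\gamma-1}\,ds = \frac{\Gamma(\beta)\Gamma(\gamma)}{\Gamma(\beta+\gamma)}\,(t-r)^{\beta+\gamma-1},$$
which collapses every composition of $n-i$ ordinary integrations with $i$ fractional-order integrations into a single kernel of the form $\Gamma(\alpha)^i/\Gamma(i\alpha+n-i)\cdot(t-r)^{i\alpha+n-i-1}$. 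Counting the $\binom{n}{i}$ ways those factors can be interleaved produces exactly the sum displayed in the lemma. Step (ii) follows from the same bound applied to $f$ in place of $a$: with $b,g$ uniformly bounded by $M$ and $f\in L^1_{\mathrm{loc}}$, each summand in the formula for $\mathcal{B}^N f(t)$ decays faster than any geometric rate thanks to the $\Gamma(i\alpha+N-i)^{-1}$ factor, so the remainder vanishes on compact subintervals of $I$.

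For the closed-form estimate in the case where $a$ is nondecreasing, I would use $a(s)\le a(t)$ and $\int_0^t(t-s)^{i\alpha+n-i-1}\,ds = t^{i\alpha+n-i}/(i\alpha+n-i)$, and reindex with $j:=n-i$, to rewrite the series as
$$a(t)\sum_{i,j\ge 0}\binom{i+j}{i}\frac{(b(t)t)^j\,(g(t)\Gamma(\alpha)t^\alpha)^i}{\Gamma(i\alpha+j+1)}.$$
To identify this with $a(t)\,E_\alpha(g(t)\Gamma(\alpha)t^\alpha)\,e^{b(t)t/\alpha}$ one needs a term-by-term comparison with the product expansion
$$a(t)\sum_{i,j\ge 0}\frac{(g(t)\Gamma(\alpha)t^\alpha)^i}{\Gamma(i\alpha+1)}\cdot\frac{(b(t)t/\alpha)^j}{j!}.$$
The ratio of the $(i,j)$-term of the first series to the corresponding term of the second simplifies to $\prod_{k=1}^{j}\alpha(i+k)/(i\alpha+k)$, and the elementary inequality $\alpha(i+k)\le i\alpha+k$ (valid since $0<\alpha<1$ and $k\ge 1$) makes every factor $\le 1$. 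This termwise domination yields the product bound.

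I expect the last factorization to be the main obstacle: the double series produced by iteration does not obviously split as a product of two simple series, and passing to $E_\alpha(\cdot)\cdot\exp(\cdot)$ hinges on the slightly nonobvious elementary inequality above. Everything else is routine bookkeeping built around the Beta integral, Fubini, and the monotonicity of $b$ and $g$.
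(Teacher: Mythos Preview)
The paper does not actually prove this lemma: it is stated there without proof and attributed to Theorem~2.1 and Corollary~2.1 of \cite{W}. Your proposal, by contrast, supplies a complete argument, and it is essentially the standard one for fractional Gronwall inequalities: iterate the operator $\mathcal{B}$, collapse the nested convolutions via the Beta identity, and use the $\Gamma(i\alpha+N-i)^{-1}$ decay to kill the remainder. The final step---bounding the double series by $E_\alpha(\cdot)\,e^{b(t)t/\alpha}$ through the termwise inequality $\alpha(i+k)\le i\alpha+k$---is correct and is indeed the only non-mechanical point. So there is nothing to compare against here; you have provided a valid proof where the paper merely cites one.
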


The following result gives the uniqueness of local solution of (\ref{main}).

\begin{thm}\label{uniqueness}
Under the assumption ({\bf H-1}), for any $g \in C^{1}([0,\infty))$,  suppose that $(u_1, v_1, \tau^1)$ and $(u_2, v_2, \tau^2)$ are two local mild solutions of the system (\ref{main}). Define $\tau = \tau^1 \wedge \tau^2$. Then we have $(u_1, v_1) = (u_2, v_2)$ on $[0, \tau)$.
\end{thm}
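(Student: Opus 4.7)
The plan is to combine a joint localization of the two solutions with a Gronwall argument. For each $m, l \in \mathbb{N}$, introduce the stopping time
\begin{align*}
\tau_m^l := \tau^1_l \wedge \tau^2_l \wedge \inf\{t \geq 0 : \|u_1(t)\|_\infty \vee \|u_2(t)\|_\infty \geq m\},
\end{align*}
on which both $u_i$ satisfy the mild formulation and are uniformly bounded by $m$. Since the paths of $u_i$ are continuous in $C(\bar{\mathcal{O}})$ on $[0,\tau^i)$, the inf above tends to $+\infty$ on $\{\omega : t < \tau(\omega)\}$ as $m \to \infty$, so $\tau_m^l \uparrow \tau$ as $l, m \to \infty$. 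It therefore suffices to prove $u_1 = u_2$ almost surely on $[0, \tau_m^l]$ for each fixed $m, l$.

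Subtracting the two mild formulations cancels $e^{-tA} u_0$ and expresses $u_1 - u_2$ as the sum of three terms $J_1 + J_2 + J_3$ directly analogous to those in the proof of Theorem \ref{thm l}, but without the cutoff $\theta_m$, which equals $1$ on $[0,\tau_m^l]$. Reusing the bound (\ref{A4}), the elliptic estimate (\ref{w}), and the local Lipschitz continuity of $g$ on $[-2m, 2m]$ (available since $g \in C^1$), I obtain for $t \leq \tau_m^l$
\begin{align*}
J_1(t) \leq C_m \int_0^t (t-s)^{\alpha - 1} \|u_1(s) - u_2(s)\|_\infty \, ds, \qquad J_2(t) \leq C_m \int_0^t \|u_1(s) - u_2(s)\|_\infty \, ds,
\end{align*}
with $\alpha := \tfrac{1}{2} - \beta - \epsilon$, where the free parameters $p > 6$, $\beta \in (\tfrac{1}{p}, \tfrac{1}{6})$ and $\epsilon \in (0, \tfrac{1}{6} - \beta)$ are chosen so that $3\alpha > 1$. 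For the stochastic convolution $J_3$ I apply Lemma \ref{M} with the new choice $\eta = 3$, $r = \tfrac{3}{2}$, $q = \infty$ (which satisfies $\tfrac{1}{r} + \tfrac{1}{q} < 1$) together with the Lipschitz bound in (H-1) to arrive at the integral-form estimate
\begin{align*}
\mathbb{E} \sup_{t \leq T \wedge \tau_m^l} J_3(t)^3 \leq C(T)\, K^3\, \mathbb{E} \int_0^{T \wedge \tau_m^l} \|u_1(s) - u_2(s)\|_\infty^3 \, ds.
\end{align*}

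Setting $\varphi(T) := \mathbb{E} \sup_{t \leq T \wedge \tau_m^l} \|u_1(t) - u_2(t)\|_\infty^3$, H\"older's inequality with conjugate exponents $(3/2, 3)$ applied to $J_1$ and $J_2$ yields $J_1(t)^3 \leq C_m\, t^{3\alpha - 1} \int_0^t \|u_1 - u_2\|_\infty^3 \, ds$ (integrable at the origin thanks to $3\alpha > 1$) and $J_2(t)^3 \leq C_m\, t^2 \int_0^t \|u_1 - u_2\|_\infty^3 \, ds$. Combining these with the $J_3$ bound gives, on any bounded time interval,
\begin{align*}
\varphi(T) \leq C(m, T) \int_0^T \varphi(s) \, ds,
\end{align*}
and the classical Gronwall inequality (the special case of Lemma \ref{GGI} with $a \equiv 0$ and $g \equiv 0$) forces $\varphi \equiv 0$. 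Hence $u_1 = u_2$ on $[0, \tau_m^l]$ almost surely; letting $l, m \to \infty$ delivers $u_1 = u_2$ on $[0,\tau)$, after which $v_1 = G * u_1 = G * u_2 = v_2$ is automatic.

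The main obstacle is to bring the stochastic term $J_3$ into a shape that fits into the Gronwall scheme. The choice $\eta = 1$ used in Theorem \ref{thm l} yields an estimate proportional to $T^{1/3}\, \mathbb{E} \sup_{t \leq T}\|u_1 - u_2\|_\infty$, whose external $T^{1/3}$ prefactor only supports a small-time contraction and cannot be iterated. The structural observation that makes the Gronwall argument close is to match the integrability exponent to the time exponent by taking $\eta = 2r$ in Lemma \ref{M}: this collapses the $L^{2r}$-in-time norm on the right-hand side into a pure time integral of $\|u_1 - u_2\|_\infty^{2r}$, which is precisely the form required by Gronwall.
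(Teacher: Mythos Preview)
Your proof is correct and follows the same overall architecture as the paper: localize both solutions by a joint stopping time, split the difference $u_1-u_2$ into the nonlinear drift term $J_1$, the reaction term $J_2$, and the stochastic convolution $J_3$, estimate each, and close with a Gronwall-type inequality. The treatment of $J_3$ is identical---both you and the paper take $\eta=2r$ in Lemma~\ref{M} (the paper with general $\gamma>2$, you with $\gamma=3$) so that the time norm collapses to a plain integral.

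The one genuine difference is in how $J_1$ is handled at the end. The paper splits the singular weight $(t-s)^{-\beta-\frac12-\epsilon}$ between both H\"older factors, so the final inequality still carries a weakly singular kernel and must be closed with the generalized Gronwall--Bellman inequality (Lemma~\ref{GGI}). You instead place the entire singularity into the factor not containing $\|u_1-u_2\|_\infty$, which forces the extra integrability constraint $3\alpha>1$ (equivalently $\beta+\epsilon<\tfrac16$, hence $p>6$); once that constraint is met, the singular kernel disappears and the classical Gronwall inequality suffices. Your route is slightly more elementary---it bypasses Lemma~\ref{GGI} entirely---at the cost of a sharper but still easily satisfiable choice of parameters. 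The paper's route is more flexible in the parameters but relies on the heavier Gronwall variant.
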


\begin{proof}
Define
$$
\tau_R^i = \inf\{t \geq 0 \mid \|u_i\|_{\Upsilon^u_t} \geq R\} \wedge \tau^i, \quad i = 1, 2,
$$
and set $\tau_R = \tau_R^1 \wedge \tau_R^2$. By the definition of a mild solution, for $t \in [0, T \wedge \tau_R]$ and $\gamma>2$, we have
\begin{align}\label{1-2 R}
    \|u_1(t) - u_2(t)\|_\infty^\gamma \leq & 3^{\gamma-1} \left\|\int_0^t e^{-(t-s)A} \nabla \cdot (u_1 \nabla (G * u_1) - u_2 \nabla (G * u_2))(s) \, ds \right\|_\infty^\gamma \nonumber \\
    & + 3^{\gamma-1} \left\|\int_0^t e^{-(t-s)A} \big(g(u_1(s)) - g(u_2(s))\big) \, ds \right\|_\infty^\gamma \nonumber \\
    & + 3^{\gamma-1} \left\|\int_0^t e^{-(t-s)A} \big(\sigma(u_1(s)) - \sigma(u_2(s))\big) \, dW_s \right\|_\infty^\gamma \nonumber \\
    \leq & 3^{\gamma-1} (J_1(t) + J_2(t) + J_3(t)).
\end{align}

To estimate $J_1(t)$, we proceed similarly as in the proof of (\ref{Phi})
\begin{align}\label{J1R}
    &I_{\{t<\tau_R\}}J_1(t)\nonumber\\
    \leq & C \left( I_{\{t<\tau_R\}}\int_0^{t} \left\|A^\beta e^{-(t-s)A} \nabla \cdot \left(u_1 \nabla (G * u_1) - u_2 \nabla (G * u_2)\right)(s) \right\|_p \, ds \right)^\gamma \nonumber \\
    \leq & C_\epsilon \left(I_{\{t<\tau_R\}} \int_0^{t} (t- s)^{-\beta - \frac{1}{2} - \epsilon} \|\left(u_1 \nabla (G * u_1) - u_1 \nabla (G * u_2) + u_1 \nabla (G * u_2) - u_2 \nabla (G * u_2)\right)(s)\|_p \, ds \right)^\gamma \nonumber \\
    \leq & C_\epsilon \left(I_{\{t<\tau_R\}} \int_0^{t} (t- s)^{\frac{\gamma - 1}{\gamma}(-\beta - \frac{1}{2} - \epsilon)} (t- s)^{\frac{1}{\gamma}(-\beta - \frac{1}{2} - \epsilon)} R \|u_1(s) - u_2(s)\|_\infty \, ds \right)^\gamma \nonumber \\
    \leq & C_{\epsilon, R} I_{\{t<\tau_R\}}\int_0^{t} (t- s)^{-\beta - \frac{1}{2} - \epsilon} \|u_1(s) - u_2(s)\|_\infty^\gamma \, ds \left( \int_0^{t} (t- s)^{-\beta - \frac{1}{2} - \epsilon} \, ds \right)^{\gamma - 1} \nonumber \\
    \leq & C_{\epsilon, R} t^{(\gamma - 1) \left(\frac{1}{2} - \beta - \epsilon \right)} I_{\{t<\tau_R\}}\int_0^{t} (t- s)^{-\beta - \frac{1}{2} - \epsilon} \|u_1(s) - u_2(s)\|_\infty^\gamma \, ds\nonumber \\
    \leq & C_{\epsilon, R} t^{(\gamma - 1) \left(\frac{1}{2} - \beta - \epsilon \right)}\int_0^{t} (t- s)^{-\beta - \frac{1}{2} - \epsilon} I_{\{s<\tau_R\}}\|u_1(s) - u_2(s)\|_\infty^\gamma \, ds.
\end{align}

By H\"older's inequality, we estimate $J_2(t)$ as follows
\begin{align}\label{J2R}
    I_{\{t<\tau_R\}}J_2(t) \leq & \left( \int_0^{t} \|e^{-(t- s)A} \big(g(u_1(s)) - g(u_2(s))\big)\|_\infty \, ds \right)^\gamma \nonumber \\
    \leq & \left( I_{\{t<\tau_R\}}\int_0^{t} \sup_{r \in [-R, R]} |g'(r)| \|u_1(s) - u_2(s)\|_\infty \, ds \right)^\gamma \nonumber \\
    \leq & C_R t^2 \int_0^{t} I_{\{s<\tau_R\}}\|u_1(s) - u_2(s)\|_\infty^\gamma \, ds.
\end{align}

To bound  $J_3(t)$, use Lemma \ref{M} with $\eta = \gamma$, $r = \frac{\gamma}{2}$, and $q = \infty$ in (\ref{Mt}) to get
\begin{align}\label{J3R}
\mathbb{E} \sup_{t \in [0, T]} I_{\{t<\tau_R\}}J_3(t) \leq &\mathbb{E} \sup_{t \in [0, T]}J_3(t \wedge \tau_R)\nonumber\\
    \leq & C \mathbb{E} \int_0^{t \wedge \tau_R} \left\|\left\|\sigma(u_1(s)) - \sigma(u_2(s))\right\|_{\ell^2}\right\|_{\infty}^\gamma \, ds \nonumber \\
    \leq & C \mathbb{E} \int_0^{t}I_{\{s<\tau_R\}} \|u_1(s) - u_2(s)\|_\infty^\gamma \, ds.
\end{align}

Combining (\ref{1-2 R}), (\ref{J1R}), (\ref{J2R}), and (\ref{J3R}), we obtain
\begin{align*}
    &\mathbb{E} \sup_{t \in [0, T]} I_{\{t<\tau_R\}}\|u_1(t) - u_2(t)\|_\infty^\gamma \\
    &\leq  C_{\epsilon, R} T^{(\gamma - 1) \left(\frac{1}{2} - \beta - \epsilon \right)} \mathbb{E}\int_0^{T} (T- s)^{-\beta - \frac{1}{2} - \epsilon} I_{\{s<\tau_R\}}\|u_1(s) - u_2(s)\|_\infty^\gamma \, ds \\
    &\quad + C (T^2 + 1) \mathbb{E} \int_0^{T} I_{\{s<\tau_R\}}\|u_1(s) - u_2(s)\|_\infty^\gamma \, ds.
\end{align*}

Applying Lemma \ref{GGI}, we conclude
\begin{align*}
    \mathbb{E} \sup_{t \in [0, T]} I_{\{t<\tau_R\}}\|u_1(t) - u_2(t)\|_\infty^\gamma = 0.
\end{align*}

We thus obtain uniqueness by noticing that $\tau_R \uparrow \tau$ as $R \uparrow \infty$ and that $T$ is arbitrary.
\hfill $\Box$\end{proof}

\section{Global Existence}\label{section4}
\setcounter{equation}{0}
\begin{defn}\label{def maximal solution}
Let $(u, v, \tau)$ be a local mild solution of the system (\ref{main}). We say that $(u, v, \tau)$ is a maximal local solution if
$$
\limsup_{t \nearrow \tau} \sup_{s \in [0, t]} \left(\|u(s)\|_\infty + \|v(s)\|_{1, \infty}\right) = \infty \text{ on } \{\tau < \infty\} \text{ a.s.}.
$$
\end{defn}

Recall the stopping times $\{\tau_m, m \in \mathbb{N}\}$ defined in (\ref{eq stopping time 1}). By the uniqueness of the local solution established in Section 3.2, we have that $\tau_m \leq \tau_{m+1}$ almost surely, and
$$
(u_{m+1}, v_{m+1}) = (u_m, v_m) \text{ on } [0, \tau_m).
$$

Define a stopping time
$$
\tau = \lim_{m \to \infty} \tau_m,
$$

 and a stochastic process $(u, v)$ on $[0, \tau)$ by
$$
(u, v) = (u_m, v_m) \text{ on } [0, \tau_m).
$$
Then $(u, v, \tau)$ be a local mild solution of the system (\ref{main}).

Since $\|u_m\|_{\Upsilon^u_{\tau_m}} \geq m$ on $\{\tau < \infty\}$, it follows that
$$
\limsup_{t \nearrow \tau} \sup_{s \in [0, t]} \left(\|u(s)\|_\infty + \|v(s)\|_{1, \infty}\right) \geq \lim_{m \to \infty} \|u_m\|_{\Upsilon^u_{\tau_m}} = \infty \text{ on } \{\tau < \infty\}.
$$

Thus, $(u, v, \tau)$ is a maximal local solution of the system (\ref{main}). For the remainder of this section, we assume that $(u, v, \tau)$ is a maximal local solution of the system (\ref{main}).
\vskip 0.3cm
To derive a $L^p$ It\^{o} formula for the solution $u$,
we now introduce the following approximation $U_n$ of $u$:
\begin{align*}
    U_n(t) &= e^{-t A} R_n u_0 - \int_0^t e^{-(t-s) A} \nabla \cdot R_n \left(\chi u(s) \nabla G * u(s)\right) \, ds \nonumber \\
    &\quad + \int_0^t e^{-(t- s) A} R_n g(u(s)) \, ds + \int_0^t e^{-(t-s) A} R_n \sigma(u(s)) \, dW_s,
\end{align*}
where $R_n = n R(n, -A)$, and $R(n, -A) = (nI + A)^{-1}$ denotes the resolvent of $-A$ evaluated at $n \in \mathbb{N}$, with $n \in \rho(-A)$. Here, $\rho(-A)$ represents the resolvent set of $-A$, which is the set of all complex numbers $\lambda$ for which $\lambda I + A$ has a bounded inverse.

The operator $R_n$ is an approximation to the identity operator, known as the Yosida approximation.
This approximation plays a crucial role in regularizing the nonlinear terms in the differential equations.

We also have that $R_n : L^p(\Omega) \to \mathcal{D}(A)$ and according to Theorem 1.4 from \cite{GM} and the condition (\ref{A1}), it follows that $n \in \rho(-A)$ for every $n > \nu_1$. Moreover, for any $u \in L^p(\mathcal{O})$,
$$
\|R(n, -A) u\|_p \leq C \frac{\|u\|_p}{n - \nu_1},
$$
where $C$ is a constant independent of $n$. For further details on Yosida approximations, see Chapter 1.2 of \cite{GM}.

Regarding the process the process $U_n(t)$, we have the following conclusions.

\begin{prop}\label{prop}
    For every $m\geq 1$, the process  $U_n(t)$ satisfies the following equation in $L^2(\mathcal{O})$ for $t< \tau_m$,
    \begin{align}\label{un-1}
        U_n(t) = \int_0^{t} \left( -A U_n(s) - \nabla \cdot R_n \left( \chi u(s) \nabla G * u(s) \right) + R_n g(u(s)) \right) \, ds
        + \int_0^{t} R_n \sigma(u(s)) \, dW_s.
    \end{align}
\end{prop}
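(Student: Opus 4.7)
The plan is to show that the mild-form definition of $U_n(t)$ satisfies the strong-form integral identity (\ref{un-1}), with the initial value $R_n u_0$ absorbed on the right-hand side. The decisive feature is that the Yosida resolvent $R_n$ maps $L^p(\mathcal{O})$ into $\mathcal{D}(A)$, so that $R_n u_0$, $R_n g(u(s))$, $R_n(\chi u(s)\nabla G*u(s))$ and $R_n\sigma(u(s))$ all lie in $\mathcal{D}(A)$, and hence $AU_n(s)$ is well-defined in $L^2(\mathcal{O})$.

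First I would collect the uniform bounds valid on $[0,\tau_m)$: there $\|u(s)\|_\infty\le m$, so by (\ref{w}), $\|\nabla G*u(s)\|_\infty\le C_m$; by (H-1), $\|\sigma(u(s))\|_{\ell^2}\le K(m+1)$; and $\|g(u(s))\|_\infty\le g_m$. Coupled with the boundedness of $R_n$ on $L^p$ and the boundedness (for each fixed $n$) of $AR_n$ on $L^p$, this shows that the integrands in the mild formula lie almost surely in $L^\infty([0,\tau_m);\mathcal{D}(A))$, so that $A$ may be pulled inside the time-integrals.

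The core of the argument is the generator identity
\begin{equation*}
e^{-(t-s)A}y - y = -\int_s^t Ae^{-(r-s)A} y\,dr, \qquad y\in\mathcal{D}(A),
\end{equation*}
applied separately to each of the four pieces of $U_n(t)$. For the stochastic convolution this rewrites
\begin{equation*}
\int_0^t e^{-(t-s)A}R_n\sigma(u(s))\,dW_s = \int_0^t R_n\sigma(u(s))\,dW_s - \int_0^t \int_s^t Ae^{-(r-s)A}R_n\sigma(u(s))\,dr\,dW_s,
\end{equation*}
and a stochastic Fubini exchange converts the double integral into $-\int_0^t A\bigl(\int_0^r e^{-(r-s)A}R_n\sigma(u(s))\,dW_s\bigr)\,dr$. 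The analogous manoeuvre, using the classical Fubini theorem, applies to each of the three Bochner integrals. The four inner $-A(\cdot)$ pieces then recombine to $-\int_0^t AU_n(r)\,dr$, while the leading terms produce $R_n u_0$ together with the Lebesgue and It\^o drivers of (\ref{un-1}).

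The principal obstacle I expect is the rigorous justification of the stochastic Fubini step for the kernel $(r,s,\omega)\mapsto Ae^{-(r-s)A}R_n\sigma(u(s))(\omega)$. For each fixed $n$, the bound $\|AR_n\|_{L^2\to L^2}\le Cn$ together with the semigroup contraction yields $\|Ae^{-(r-s)A}R_n\sigma(u(s))\|_{\ell^2(L^2)}\le C_n\|\sigma(u(s))\|_{\ell^2(L^2)}$ uniformly in $0\le s\le r\le t<\tau_m$. Combined with the $L^\infty$-bound on $\sigma(u)$ on $[0,\tau_m)$, this supplies the square-integrability in $(s,\omega)$ and local integrability in $r$ that the classical stochastic Fubini theorem (Da Prato--Zabczyk) requires. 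Once this step is secured, (\ref{un-1}) follows by direct algebraic rearrangement.
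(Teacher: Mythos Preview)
Your proposal is correct and follows essentially the same route as the paper: both arguments hinge on the fact that $R_n$ maps into $\mathcal{D}(A)$, the semigroup identity $e^{-(t-s)A}y-y=-\int_s^t Ae^{-(r-s)A}y\,dr$ for $y\in\mathcal{D}(A)$, and the (stochastic) Fubini theorem to interchange the $r$- and $s$-integrations. The only cosmetic difference is the direction of the computation---the paper starts from $\int_0^t AU_n(s)\,ds$ and works outward, whereas you start from the mild expression for $U_n(t)$ and peel off the $-\int_0^t AU_n(r)\,dr$ contribution---and your remark that the initial datum $R_nu_0$ should appear on the right-hand side of (\ref{un-1}) is well taken.
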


\begin{proof}
    Regularized by the resolvent operator $R_n$, it is easy to see that $U_n(t)\in D(A)$ almost surely. It remains to show that the equation (\ref{un-1}) holds. From the definition of $U_n(t)$  it follows that
    \begin{align*}
        \int_0^t  A U_n(s) \, ds
        &= \int_0^t  e^{-sA} A R_n u_0 \, ds- \int_0^t  \int_0^s A e^{-(s-r)A} \nabla \cdot R_n \left( \chi u(r) \nabla G * u(r) \right) \, dr \, ds \nonumber \\
        &\quad + \int_0^t  \int_0^s A e^{-(s-r)A} R_n g(u(r)) \, dr \, ds + \int_0^t  \int_0^s Ae^{-(s-r) A} R_n \sigma(u(r)) \, dW_r \, ds.
    \end{align*}
Applying Fubini's theorem and the stochastic Fubini theorem (see Theorem 2.8 in \cite{GM} and Appendix A of \cite{MR})  we get
    \begin{align}\label{un-2}
        \int_0^t A U_n(s) \, ds
        &= \int_0^t e^{-sA} A R_n u_0 \, ds - \int_0^t \int_r^t  e^{-(s-r)A} A \nabla \cdot R_n \left( \chi u(r) \nabla G * u(r) \right) \, ds \, dr \nonumber \\
        &\quad + \int_0^t \int_r^t e^{-(s-r)A} A R_n g(u(r)) \, ds \, dr+ \int_0^t \int_r^t e^{-(s-r)A} A R_n \sigma(u(r)) \, ds \, dW_r.
    \end{align}
Invoking the identity
    \begin{align*}
        \int_0^t e^{-sA} A \phi \, ds =e^{-tA} \phi - \phi
    \end{align*}
    for $\phi \in \mathcal{D}(A)$, (\ref{un-2}) easily yields that
    \begin{align*}
        \int_0^t A U_n(s) \, ds = U_n(t) + \int_0^{t} \left( \nabla \cdot R_n \left( \chi u(s) \nabla G * u(s) \right) - R_n g(u(s)) \right) \, ds  - \int_0^{t} R_n \sigma(u(s)) \, dW_s.
    \end{align*}
\hfill $\Box$\end{proof}

The next result provides the convergence of $U_n(t)$ to $u(t)$.

\begin{lem}\label{lem 4.7}
    For all $m \in \mathbb{N}$ and $T \geq 0$, we have
    $$u \in L^\infty([0, T \wedge \tau_m]; C (\bar{\mathcal{O}})) \cap L^2([0, T \wedge \tau_m]; W^{1,2}(\mathcal{O}))$$ almost surely,

    \begin{align}\label{W12}
        \lim_{n \rightarrow \infty} \mathbb{E} \left( \int_0^{T \wedge \tau_m} \| \nabla U_n(t) - \nabla u(t) \|_2^2dt \right) = 0,
    \end{align}

    and
    \begin{align}\label{infty}
        \lim_{n \rightarrow \infty} \mathbb{E} \left( \sup_{t \in [0, T \wedge \tau_m]} \| U_n(t) - u(t) \|_\infty \right) = 0.
    \end{align}
\end{lem}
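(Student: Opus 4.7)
The plan is to exploit the mild formulations of $u$ and $U_n$, the semigroup estimates of Lemma~2.1, and the fact that the Yosida operators $R_n$ satisfy $R_n\to I$ strongly on every $L^p(\mathcal O)$ with uniformly bounded operator norm. The inclusion $u\in L^\infty([0,T\wedge\tau_m];C(\bar{\mathcal O}))$ follows immediately from the stopping-time definition, which forces $\|u(t)\|_\infty\le m$ on $[0,T\wedge\tau_m]$, together with the $C(\bar{\mathcal O})$-regularity from Theorem~3.2. For the $L^2_tW^{1,2}_x$ regularity I would apply $\nabla$ to the mild formulation termwise: the linear part is controlled by the self-adjoint energy identity $\int_0^T\|\nabla e^{-tA}\phi\|_2^2\,dt\le\tfrac12\|\phi\|_2^2$; the drift and $g$ parts by $L^2$ maximal regularity of the analytic semigroup generated by $-A$, using the $L^\infty$ bound on $\chi u\nabla v$ from (2.11) and $\|g(u)\|_\infty\le\sup_{[0,m]}|g|$; and the stochastic convolution $N(t)$ by It\^o's formula applied to $\|N(t)\|_2^2$, which yields
$$
2\,\mathbb E\int_0^{T\wedge\tau_m}\|\nabla N(t)\|_2^2\,dt\le\mathbb E\int_0^{T\wedge\tau_m}\|\|\sigma(u)\|_{\ell^2}\|_2^2\,dt,
$$
finite by (H-1) and $\|u\|_\infty\le m$.

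For the two convergence assertions, subtracting the mild formulations of $u$ and $U_n$ gives the decomposition
\begin{align*}
U_n(t)-u(t) &= e^{-tA}(R_n-I)u_0 -\int_0^t e^{-(t-s)A}\nabla\cdot(R_n-I)(\chi u\nabla v)(s)\,ds\\
&\quad +\int_0^t e^{-(t-s)A}(R_n-I)g(u(s))\,ds +\int_0^t e^{-(t-s)A}(R_n-I)\sigma(u(s))\,dW_s.
\end{align*}
For (4.2), the same $L^2$-maximal-regularity and It\^o energy estimates as above dominate $\mathbb E\int_0^{T\wedge\tau_m}\|\nabla(U_n-u)(t)\|_2^2\,dt$ by terms of the form $\|(R_n-I)\phi\|_2^2$ integrated against already-established $L^\infty$ bounds; Lebesgue's dominated convergence, using $R_n\phi\to\phi$ in $L^2$ and the uniform bound $\|R_n\|_{L^2\to L^2}\le C$, then yields the limit. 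For (4.3), I would bound the three deterministic pieces exactly as in the contraction-mapping computation of Theorem~3.2, using the embedding $D((A_p+1)^\beta)\hookrightarrow C(\bar{\mathcal O})$ with $\beta>1/p$ and the semigroup estimates (2.3)--(2.7), and control the stochastic piece by Lemma~3.1 applied to $(R_n-I)\sigma(u)$.

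The main obstacle will be passing to the limit in the stochastic convolution in the $L^\infty_x$ norm of (4.3). Since $R_n$ acts componentwise on the $\ell^2$ sequence, one has the pointwise domination $\|(R_n-I)\sigma(u)\|_{\ell^2}\le(\|R_n\|_{\mathrm{op}}+1)\|\sigma(u)\|_{\ell^2}$, finite in the mixed norm $\|\cdot\|_{2r,2q,T\wedge\tau_m}$ by (H-1) and the $L^\infty$-regularity of $u$ just established. Coupled with the pointwise convergence $R_n\sigma_k(u)\to\sigma_k(u)$ in $L^{2q}_x$ for each $k$, a dominated convergence argument over the index $k$ and the space-time variables produces $\mathbb E\,\|\|(R_n-I)\sigma(u)\|_{\ell^2}\|_{2r,2q,T\wedge\tau_m}\to 0$, which combined with Lemma~3.1 delivers the desired $L^\infty_x$ convergence and completes the proof.
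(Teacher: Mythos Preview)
Your proposal is correct and follows essentially the same route as the paper: termwise decomposition of the mild formulation, an energy/maximal-regularity argument for the $L^2_tW^{1,2}_x$ membership (the paper phrases the drift estimate via the variational chain rule and integration by parts rather than citing maximal regularity, and treats the stochastic convolution more tersely, but your It\^o-formula argument for it is in fact cleaner), and then the same dominated-convergence scheme---using the contraction-mapping-style $L^\infty$ bounds plus Lemma~3.1 for the stochastic piece---to obtain both limits. The only slip is a numbering issue: the contraction-mapping computation you invoke is in Theorem~3.1, not~3.2.
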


\begin{proof}
     It was already proved that $u\in L^\infty([0, T \wedge \tau_m]; C (\bar{\mathcal{O}}))$. We now show that $u \in L^2([0, T \wedge \tau_m]; W^{1,2}(\mathcal{O}))$.
     Recall
    \begin{align*}
     u(t)=& e^{-tA}u_0 - \int_0^te^{-(t-s)A} \left(\nabla \cdot \left(\chi u(s) \nabla G * u(s)\right)\right) \, ds \nonumber \\
     & - \int_0^t e^{-(t-s)A} g(u(s)) \, ds + \int_0^t e^{-(t-s)A} \sigma(u(s)) \, dW_s \nonumber\\
        :=&I_1(t)+I_2(t)+I_3(t)+I_4(t).
     \end{align*}
    To prove that $E[\int_0^{T \wedge \tau_m}\|\nabla u(t)\|_2^2dt]<\infty$, it suffices to show that $E[\int_0^{T \wedge \tau_m}\|\nabla I_i(t)\|_2^2dt]<\infty$, $i=1,2,3,4$.
   Recalling the following regularizing property of the semigroup $e^{-tA}$ stated in Lemma 2.1
   \begin{equation}\label{4.7-1}
      \|\nabla e^{-tA}f\|_{2}\leq C(1+t^{-\frac{1}{2}})e^{-\nu_1 t}\| f\|_2, \quad f\in L^{p}(\mathcal{O}),
  \end{equation}
  using the fact that $\sup_{0\leq t\leq T \wedge \tau_m}\|u(t)\|_{\infty}\leq m$, one can easily show that $E[\int_0^{T \wedge \tau_m}\|\nabla I_i(t)\|_2^2dt]<\infty$ for $i=3,4$.

By the chain rule, we have
\begin{align*}
    2\int_0^T \|\nabla e^{-tA} u_0\|_2^2 \, dt = \|u_0\|_2^2-\|e^{-TA} u_0\|_2^2.
\end{align*}
This shows that $E[\int_0^{T \wedge \tau_m}\|\nabla I_1(t)\|_2^2dt] \leq \|u_0\|_2^2<\infty$.

For the term $I_2(t)$, we notice that for $I_2(t)$ is the variational solution of the following partial differential equation:
  $$\frac{\partial I_2(t)}{\partial t}=-AI_2(t)-\nabla \cdot \left(\chi u(t) \nabla G * u(t)\right).$$
  Use the chain rule, integration by parts and the Young's inequality to get
  \begin{align*}
     \|I_2(t)\|_2^2=& -2\int_0^t\|\nabla I_2(s)\|_2^2ds - 2\int_0^t<I_2(s), \left(\nabla \cdot \left(\chi u(s) \nabla G * u(s)\right)\right)>_{L^{2}(\mathcal{O}} \, ds \nonumber \\
    = & -2\int_0^t\|\nabla I_2(s)\|_2^2ds +2\int_0^t<\nabla I_2(s), \left(\chi u(s) \nabla G * u(s)\right)>_{L^{2}(\mathcal{O}} \, ds \nonumber \\
    \leq & -\int_0^t\|\nabla I_2(s)\|_2^2ds +\int_0^t\left\|\chi u(s) \nabla G * u(s)\right)\|_2^2\, ds.
     \end{align*}
  Moving the first term on right side to the left side we obtain
  $$E[\int_0^{T \wedge \tau_m}\|\nabla I_2(t)\|_2^2dt]\leq E[\int_0^{T \wedge \tau_m}\left\|\chi u(t) \nabla G * u(t)\right)\|_2^2\, dt]\leq D_m,$$
  $D_m$ is a constant depending on $m$.
  Putting the above arguments together, we see that $u\in L^2([0, T \wedge \tau_m]; W^{1,2}(\mathcal{O}))$ and moreover,
  $$E[\int_0^{T \wedge \tau_m}\|\nabla u(t)\|_2^2dt]\leq C_m.$$

Now we turn to the proof of the convergence of $U_n(t)$, namely (\ref{W12}) and (\ref{infty}).
By definition of $u(t)$ and $U_n(t)$, we have
\begin{align*}
     U_n(t)-u(t)=& e^{-tA} (R_n - I) u_0 - \int_0^t e^{-(t-s)A} \nabla \cdot (R_n - I) \left( \chi u(s) \nabla G * u(s) \right) ds \nonumber \\
     & - \int_0^t e^{-(t-s)A} (R_n - I) g(u(s)) \, ds + \int_0^t e^{-(t-s)A} (R_n - I)\sigma(u(s)) \, dW_s \nonumber\\
        :=&J_1^n(t)+J_2^n(t)+J_3^n(t)+J_4^n(t).
     \end{align*}
     To prove (\ref{W12}), it suffices to show that $\lim_{n \rightarrow \infty}E[\int_0^{T \wedge \tau_m}\|\nabla J_i^n(t)\|_2^2dt]=0$, $i=1,2,3,4$. Using (\ref{4.7-1}), we can easily show that
     \begin{align*}
         E[\int_0^{T \wedge \tau_m}\|\nabla J_3^n(t)\|_2^2dt]\leq C E[\int_0^{T \wedge \tau_m} \|(R_n - I) g(u(s))\|_2^2dt],
     \end{align*}
     and
     \begin{align*}
         E[\int_0^{T \wedge \tau_m}\|\nabla J_4^n(t)\|_2^2dt]\leq C E[\int_0^{T \wedge \tau_m} \|\|(R_n - I) \sigma(u(s))\|_{l^2}\|_2^2dt].
     \end{align*}

Similarly as the proofs of $I_1(t)$ and $I_2(t)$, we can show the following inequalities:
 \begin{align*}
         E[\int_0^{T \wedge \tau_m}\|\nabla J_1^n(t)\|_2^2dt]\leq C\| (R_n - I)u_0\|_2^2,
     \end{align*}
     and
     \begin{align*}
         E[\int_0^{T \wedge \tau_m}\|\nabla J_2^n(t)\|_2^2dt]\leq E[\int_0^{T \wedge \tau_m}\left\|\chi (R_n - I)u(t) \nabla G * u(t)\right\|_2^2\, dt].
     \end{align*}
     Since $\| (R_n - I)u_0\|_2$, $\|\chi (R_n - I)u(t) \nabla G * u(t)\|_2 $, $\|(R_n - I) g(u(s))\|_2$, and $\|\|(R_n - I) \sigma(u(s)) \|_{\ell^2} \|_{2}$ tend to 0 as $n \to \infty$, and are bounded by $C(m^2 + m + 1)$ for $s \in [0, \tau_m]$, by the H\"older inequality and the Dominated Convergence Theorem we deduce that   $\lim_{n \rightarrow \infty}E[\int_0^{T \wedge \tau_m}\|\nabla J_i^n(t)\|_2^2dt]=0$, $i=1,2,3,4$. Thus, we complete the proof of (\ref{W12}).

Now, we prove (\ref{infty}), for $t \in [0, \tau_m]$, we have
\begin{align}\label{4.30}
    \|U_n(t) - u(t)\|_\infty
    &\leq  \| e^{-tA} (R_n - I) u_0 \|_\infty \nonumber \\
    &\quad +  \left\| \int_0^t e^{-(t-s)A} \nabla \cdot (R_n - I) \left( \chi u(s) \nabla G * u(s) \right) ds \right\|_\infty \nonumber \\
    &\quad + \int_0^t \| e^{-(t-s)A} (R_n - I) g(u(s)) \|_\infty \, ds \nonumber \\
    &\quad + \left\| \int_0^t e^{-(t-s)A} (R_n - I) \sigma(u(s)) \, dW_s \right\|_\infty \nonumber \\
    &\leq C \| (R_n - I) u_0 \|_\infty \nonumber \\
    &\quad + C_\epsilon \int_0^t (t-s)^{-\beta - \frac{1}{2} - \epsilon} \| (R_n - I) \left( \chi u(s) \nabla G * u(s) \right) \|_p \, ds \nonumber \\
    &\quad + \int_0^t \| (R_n - I) g(u(s)) \|_\infty \, ds \nonumber \\
    &\quad + \left\| \int_0^t e^{-(t-s)A} (R_n - I) \sigma(u(s)) \, dW_s \right\|_\infty.
\end{align}
Where, the regularizing property (\ref{A4}) has been used, $p$ is sufficiently big and $\beta>\frac{1}{p}$.
Applying Lemma \ref{M} with $\eta = 1$, $\frac{1}{r}+\frac{1}{q}<1$ in (\ref{Mt}), we have
\begin{align*}
\mathbb{E} \sup_{t \in [0, T \wedge \tau_m]}
\left\| \int_0^t e^{-(t-s)A} (R_n - I) \sigma(u(s)) \, dW_s \right\|_{\infty}
&\leq C_T \mathbb{E} \left( \int_{0}^{T\wedge \tau_m} \left\| \left\|(R_n - I) \sigma(u(s)) \right\|_{\ell^2} \right\|_{2q}^{2r} \, ds \right)^{\frac{1}{2r}}
\end{align*}

Since $\|(R_n - I) u_0\|_\infty$, $\|(R_n - I) \left( \chi u(s) \nabla G * u(s) \right)\|_{p}$, $\|(R_n - I) g(u(s))\|_\infty$, and $\|(R_n - I) \sigma(u(s)) \|_{\ell^2} \|_{2q}$ tend to 0 as $n \to \infty$, and are bounded by $C(m^2 + m + 1)$ (by assumption (H)) for $s \in [0, \tau_m]$, by the H\"older inequality and the Dominated Convergence Theorem we deduce that
\begin{align*}
&\mathbb{E} \sup_{t \in [0, T \wedge \tau_m]}\int_0^t (t-s)^{-\beta - \frac{1}{2} - \epsilon} \|(R_n - I) \left( \chi u(s) \nabla G * u(s) \right) \|_{p} \, ds\rightarrow 0,\\
&\mathbb{E}[\int_0^{T \wedge \tau_m} \|(R_n - I) g(u(s)) \|_\infty \, ds]\rightarrow 0,\\
&
\mathbb{E}[\int_0^{T \wedge \tau_m}\left\| \left\|(R_n - I) \sigma(u(s)) \right\|_{\ell^2} \right\|_{2q}^{2r} \, ds]\rightarrow 0,
\end{align*}
as $n \to \infty$. Consequently, it follows from (\ref{4.30}) that
$$\lim_{n \rightarrow \infty} \mathbb{E} \left( \sup_{t \in [0, T \wedge \tau_m]} \| U_n(t) - u(t) \|_\infty \right) = 0.$$
\hfill $\Box$\end{proof}

Now, we will derive an It\^{o} formula for
$\|u(t)\|_{p}^{p}$ using the approximation $U_n(t)$.

\begin{thm}\label{thm 4.8}
    Let $(u, v, \tau_m)$ be a local mild solution of system (\ref{main}), with $\tau_m$ defined by
    \begin{align*}
        \tau_m = \inf \{ t > 0 \mid \|u\|_{\Upsilon^u_t} \geq m \}.
    \end{align*}
    Then, for $p\geq 2$, the following It\^{o} formula for $\|u(t)\|_{p}^{p}$ holds $\mathbb{P}$-a.s. for all $t \in [0, T]$,
    \begin{align*}
        &\|u(t \wedge \tau_m)\|_{p}^{p} - \|u_0\|_{p}^{p} +p(p - 1)  \int_0^{t\wedge\tau_m}\|\nabla u^{\frac{p}{2}}(s)\|_2^2 \, ds \nonumber \\
        &= p(p - 1) \chi \int_0^{t \wedge \tau_m} \int_\mathcal{O} u^{p - 1}(s) \nabla u(s) \cdot \nabla G * u(s) \, dx \, ds \nonumber \\
        &\quad + p \int_0^{t \wedge \tau_m} \int_\mathcal{O} u^{p - 1}(s) g(u(s)) \, dx \, ds \nonumber \\
        &\quad + p \sum_{k=1}^\infty \int_0^{t \wedge \tau_m} \int_\mathcal{O} \sigma_k(u(s)) u(s) |u(s)|^{p - 2} \, dx \, dW^k_s \nonumber \\
        &\quad + \frac{p(p - 1)}{2} \int_0^{t \wedge \tau_m} \int_\mathcal{O} \sum_{k=1}^\infty |\sigma_k(u(s))|^2 |u(s)|^{p - 2} \, dx \, ds.
    \end{align*}
\end{thm}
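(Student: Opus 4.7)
My plan is to derive the stated It\^{o} formula by first applying a classical $L^p$ It\^{o} formula to the Yosida-regularized process $U_n$ from Proposition~\ref{prop}, and then passing to the limit $n\to\infty$ with the aid of Lemma~\ref{lem 4.7}. Since $U_n(t)\in\mathcal{D}(A)\subset W^{2,p}(\mathcal{O})$ and satisfies the $L^2$-valued equation (\ref{un-1}) whose drift and diffusion are smoothed by $R_n$, one may apply the classical $L^p$ It\^{o} formula to $\|U_n(t\wedge\tau_m)\|_p^p$. This can be justified either by a direct appeal to a statement such as Krylov's $L^p$ It\^{o} formula, or by projecting onto the finite-dimensional eigenspaces of $A$ (whose spectrum is a countable discrete set by the assumptions of Section~2), applying the elementary finite-dimensional It\^{o} formula on each truncated level, and then passing to the Galerkin limit using the $\mathcal{D}(A)$-regularity of $U_n$. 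Integration by parts against the Neumann condition (preserved by $U_n\in\mathcal{D}(A)$) converts the Laplacian contribution into $-p(p-1)\int_\mathcal{O}|U_n|^{p-2}|\nabla U_n|^2\,dx$ and the $-\nabla\cdot R_n(\chi u\nabla G*u)$ contribution into $p(p-1)\chi\int_\mathcal{O}|U_n|^{p-2}\nabla U_n\cdot R_n(u\nabla G*u)\,dx$.

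To pass to the limit $n\to\infty$, I combine the following ingredients: (a)~the convergences $U_n\to u$ in $L^\infty([0,T\wedge\tau_m];C(\bar{\mathcal{O}}))$ and $\nabla U_n\to\nabla u$ in $L^2_{s,x}$ from Lemma~\ref{lem 4.7}; (b)~the strong convergence $R_n\phi\to\phi$ in $L^p$ for any $\phi\in L^p$ (standard Yosida property); and (c)~the uniform bound $\sup_{[0,t\wedge\tau_m]}\|u(s)\|_\infty\leq m$ combined with continuity of $g$ and the Lipschitz property of $\sigma$. Each deterministic term passes to its limit by dominated convergence. Using $u\geq 0$ so that $|u|^{p-2}u=u^{p-1}$, and the identity $u^{p-2}|\nabla u|^2=(4/p^2)|\nabla u^{p/2}|^2$, the dissipation term is recast in the form stated on the left-hand side. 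For the stochastic integral, Burkholder--Davis--Gundy combined with dominated convergence in the quadratic variation yields convergence in probability of the martingale term.

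The principal technical obstacle is step one: rigorously establishing the $L^p$ It\^{o} formula for $U_n$ despite the fact that the noise is $L^2$-valued rather than $L^p$-valued. The Yosida regularization is crucial here, since it places $R_n\sigma(u)$ in $\ell^2(L^\infty)$ and $R_n g(u),\, R_n(u\nabla G*u)\in L^\infty$ uniformly on $[0,t\wedge\tau_m]$, so that every term appearing in the chain-rule expansion is well-defined and integrable. A secondary care point in the Galerkin approach is the ordering of limits: one first passes to the Galerkin limit for fixed $n$, and only then sends $n\to\infty$, which avoids any interchange-of-limits issue. Once step one is secured, the remaining passages to the limit are routine continuity/bounded-convergence arguments, the only nontrivial one being the mixed gradient term $\int |U_n|^{p-2}\nabla U_n\cdot R_n(u\nabla G*u)\,dx$, whose convergence follows from the strong $L^2$ convergence of $\nabla U_n$, the uniform convergence of $|U_n|^{p-2}$, and the $L^\infty$-boundedness of $\nabla G*u$ via (\ref{w}).
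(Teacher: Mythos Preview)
Your overall strategy---apply an $L^p$ It\^{o} formula to the Yosida-regularized process $U_n$ from Proposition~\ref{prop}, then pass to the limit $n\to\infty$ via Lemma~\ref{lem 4.7}---is exactly what the paper does. Two points of difference are worth noting.

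First, for the $L^p$ It\^{o} formula at the level of $U_n$, the paper does not invoke Krylov's formula or a Galerkin scheme but instead follows \cite{DG}: one approximates $|r|^p$ by the explicit $C^2$ functions
\[
F_n(r)=\begin{cases}|r|^p,&|r|<n,\\ n^{p-2}\tfrac{p(p-1)}{2}(|r|-n)^2+pn^{p-1}(|r|-n)+n^p,&|r|\ge n,\end{cases}
\]
applies the ordinary It\^{o} formula to $\int_\mathcal{O}F_n(U_n)\,dx$, and lets this truncation go. This is more elementary than your alternatives and avoids any discussion of ordering Galerkin and Yosida limits.

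Second, and more substantively, the paper does \emph{not} integrate the chemotaxis term by parts at the $U_n$ level. It keeps the term in the form $-p\chi\int_\mathcal{O}(U_n)^{p-1}\nabla\cdot R_n(u\nabla G*u)\,dx$ and passes to the limit in divergence form, using that $u\nabla G*u\in L^2([0,T\wedge\tau_m];W^{1,2})$ and that $R_n\to I$ strongly on $W^{1,2}$; only afterward is the integration by parts carried out on $\int_\mathcal{O}u^{p-1}\nabla\cdot(u\nabla G*u)\,dx$, where the boundary term vanishes because $\nabla G*u\cdot\boldsymbol{n}=\partial v/\partial\boldsymbol{n}=0$. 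Your proposal integrates by parts first and asserts that the boundary term vanishes because $U_n\in\mathcal{D}(A)$. That reasoning is incorrect for this term: the boundary contribution is $\int_{\partial\mathcal{O}}|U_n|^{p-2}U_n\,R_n(u\nabla G*u)\cdot\boldsymbol{n}\,dS$, and what is needed is that the \emph{normal component} of the vector field $R_n(u\nabla G*u)$ vanish on $\partial\mathcal{O}$. The Neumann condition on $U_n$ is irrelevant here, and applying the scalar resolvent $R_n$ component-wise does not in general preserve zero normal trace of a vector field. This is not fatal---you can either keep the divergence form as the paper does, or track the boundary term and argue it vanishes in the limit since $R_n(u\nabla G*u)\to u\nabla G*u$ in $W^{1,2}$ and the limit has zero normal trace---but the justification you give needs to be replaced.
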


\begin{proof}
    In view of Proposition \ref{prop}, the usual It\^{o} formula (see, for example, \cite{DZ}) holds $\mathbb{P}$-a.s. for all $t \in [0, \tau_m]$:
    \begin{align*}
        &F(U_n(t)) - F(u_0) = \int_0^t F'(U_n(s)) \left( -A U_n(s) - \nabla \cdot R_n (\chi u(s) \nabla G * u(s) + R_n g(u(s))) \right)  \, ds \nonumber \\
        &\quad + \sum_{k=1}^\infty \int_0^t  F'(U_n(s)) \sigma_k(u(s))  \, dW^k_s  + \int_0^t  \sum_{k=1}^\infty |\sigma_k(u(s))|^2 F''(U_n(s))  \, ds.
    \end{align*}
    Here, $F : \mathbb{R} \rightarrow \mathbb{R}$ has continuous partial derivatives $F'$ and $F''$. Applying the methodology outlined in Lemma 2 of \cite{DG}, and using the following approximation of $\|U_n(s)\|_{p}^p$:
    \begin{align*}
        \int_{\mathcal{O}}F_n(U_n(t))dx
    \end{align*}
    with $F_n$ given by
    \begin{align*}
        F_n(r) := \begin{cases}
            |r|^p & \text{if } |r| < n, \\
            n^{p-2} \frac{p(p-1)}{2} (|r| - n)^2 + p n^{p-1} (|r| - n) + n^p & \text{if } |r| \geq n,
        \end{cases}
    \end{align*}
    we obtain, $\mathbb{P}$-a.s., for all $t \in [0, T \wedge \tau_m]$:
    \begin{align}\label{4.35}
        &\|U_n(t)\|_{p}^{p} - \|u_0\|_{p}^{p} + p(p - 1) \int_0^t \|\nabla U_n^{\frac{p}{2}}(s)\|_2^2 \, ds \nonumber \\
        &= - p \chi \int_0^t \int_\mathcal{O} (U_n(s))^{p - 1} \nabla \cdot R_n (u(s) \nabla G * u(s)) \, dx \, ds \nonumber \\
        &\quad + p \int_0^t \int_\mathcal{O} (U_n(s))^{p - 1} R_ng(u(s)) \, dx \, ds \nonumber \\
        &\quad + p \sum_{k=1}^\infty \int_0^t \int_\mathcal{O} \sigma_k(u(s)) U_n(s) |U_n(s)|^{p - 2} \, dx \, dW^k_s \nonumber \\
        &\quad + \frac{p(p - 1)}{2} \int_0^t \int_\mathcal{O} \sum_{k=1}^\infty |\sigma_k(u(s))|^2 |U_n(s)|^{p - 2} \, dx \, ds.
    \end{align}

%
Replacing $t$ by $t\wedge \tau_m$ in (\ref{4.35}), we will show that each term  of the equation (\ref{4.35}) converges as $n\rightarrow \infty$.

By Lemma \ref{lem 4.7}, taking a subsequence if necessary we have
\begin{align*}
\|U_n(t \wedge \tau_m)\|_p^p \to \|u(t \wedge \tau_m)\|_p^p
\end{align*}
almost surely.

Again by Lemma \ref{lem 4.7}, Dominated Convergence Theorem and Ito isometry it is easy to see that

\begin{align*}
    \int_0^{t \wedge \tau_m} \|\nabla U_n^{\frac{p}{2}}(s)\|_2^2 \, ds \to \int_0^{t \wedge \tau_m} \|\nabla (u(s))^{\frac{p}{2}}\|_2^2 \, ds,
\end{align*}

\begin{align}
    \int_0^{t \wedge \tau_m} \int_\mathcal{O} (U_n(s))^{p-1} R_ng(u(s)) \, dx \, ds \to \int_0^{t \wedge \tau_m} \int_\mathcal{O} u^{p-1}(s) g(u(s)) \, dx \, ds, \nonumber
\end{align}

\begin{align}
    \int_0^{t \wedge \tau_m} \int_\mathcal{O} \sum_{k=1}^\infty |\sigma_k(u(s))|^2 |U_n(s)|^{p-2} \, dx \, ds \to \int_0^{t \wedge \tau_m} \int_\mathcal{O} \sum_{k=1}^\infty |\sigma_k(u(s))|^2 |u(s)|^{p-2} \, dx \, ds,\nonumber
\end{align}
and
\begin{align*}
    \sum_{k=1}^\infty \int_0^{t \wedge \tau_m} \int_\mathcal{O} \sigma_k(u(s)) U_n(s) |U_n(s)|^{p-2} \, dx \, dW^k_s \to \sum_{k=1}^\infty \int_0^{t \wedge \tau_m} \int_\mathcal{O} \sigma_k(u(s)) u(s) |u(s)|^{p-2} \, dx \, dW^k_s.
\end{align*}

We now consider the first term on the right-hand side of (\ref{4.35}). We have
\begin{align}\label{4.35-1}
    & \left |\int_0^{t \wedge \tau_m} \int_\mathcal{O} (U_n(s))^{p-1} \nabla \cdot R_n \left(u(s) \nabla G * u(s) - u^{p-1}(s) \nabla \cdot u(s) \nabla G * u(s)\right) \, dx \, ds\right | \nonumber \\
    &\leq \left | \int_0^{t \wedge \tau_m} \int_\mathcal{O} (U_n(s))^{p-1} \nabla \cdot (R_n - I) \left(u(s) \nabla G * u(s)\right) \, dx \, ds\right | \nonumber\\
    &\quad + \left |\int_0^{t \wedge \tau_m} \int_\mathcal{O} \left((U_n(s))^{p-1} - u^{p-1}(s)\right) \nabla \cdot \left(u(s) \nabla G * u(s)\right) \, dx \, ds \right |\nonumber\\
    &\leq \sup_{s \in [0, t \wedge \tau_m]} \|U_n(s))\|_{\infty}^{p-1} \int_0^{t \wedge \tau_m} \int_\mathcal{O}| \nabla \cdot (R_n - I) \left(u(s) \nabla G * u(s)\right)| \, dx \, ds \nonumber\\
    &\quad + C\sup_{s \in [0, t \wedge \tau_m]} \|(U_n(s))^{p-1} - u^{p-1}(s)\|_{\infty} \left(\int_0^{t \wedge \tau_m} \int_\mathcal{O}\left| \nabla \cdot \left(u(s) \nabla G * u(s)\right)\right|^2 \, dx \, ds\right)^{\frac{1}{2}}.
\end{align}
Note that, by Lemma \ref{lem 4.7},
 $$\nabla \cdot  \left(u(s) \nabla G * u(s)\right) \in  L^2([0, T \wedge \tau_m]; L^2(\mathcal{O})).$$
Since  $e^{-tA}$ is strongly continuous on the space $W^{1,2}(\mathcal{O})$, $R_n\left(u(s) \nabla G * u(s)\right)\rightarrow u(s) \nabla G * u(s)$ in $L^2([0, T \wedge \tau_m]; W^{1,2}(\mathcal{O}))$.
Therefore, it follows from (\ref{4.35-1}) that
\begin{align*}
    & \int_0^{t \wedge \tau_m} \int_\mathcal{O} (U_n)^{p-1}(s) \nabla \cdot R_n \left(u(s) \nabla G * u(s)\right) \, dx \, ds \to \int_0^{t \wedge \tau_m} \int_\mathcal{O} u^{p-1}(s) \nabla \cdot \left(u(s) \nabla G * u(s)\right) \, dx \, ds,
\end{align*}
as $n\rightarrow \infty$.

To complete the proof, it remains to replace $t$ by $t\wedge \tau_m$ and let $n\rightarrow \infty$ in (\ref{4.35}).
\hfill $\Box$\end{proof}

\vskip 0.3cm

The critical step towards proving the global existence of equation (\ref{main}) is to get  the following a priori bound. This bound asserts that for some $p_0$ greater than $2$, the solution $u(t)$ is uniformly bounded in $L^{p_0}$.
\begin{lem}\label{lem gamma}
Assuming $g$ satisfies (\ref{g}) with $\mu > \frac{\chi}{2}$, for $p_0 \in (2, \frac{\chi}{(\chi-\mu)^+})$, there exists a constant $C_{p_0,\|u_0\|_\infty} > 0$ such that for all $T > 0$, the following inequality holds:
\begin{equation*}
\mathbb{E}\sup_{t\in  [0,T\wedge\tau]}\|u(t)\|_{p_0}^{p_0} \leq C_{{p_0},\|u_0\|_\infty}.
\end{equation*}
\end{lem}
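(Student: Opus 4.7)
The plan is to apply the $L^{p_0}$ It\^o formula of Theorem~4.8 at the stopped time $t\wedge\tau_m$, use the elliptic identity $\Delta v=v-u$ to reveal a super-critical coercive term, exploit the logistic damping in (H-2) to give it a favorable sign, upgrade super-linear coercivity to effective linear dissipation via Jensen's inequality, and finally combine an exponential rescaling with a BDG/Young argument to obtain a $T$-independent bound; Fatou's lemma then passes the estimate from $\tau_m$ to the maximal time $\tau$.

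Concretely, with $p=p_0$ in Theorem~4.8, integration by parts (Neumann boundary data) and the substitution $\Delta v=v-u$ give
$$p_0(p_0-1)\chi\int_{\mathcal{O}} u^{p_0-1}\nabla u\cdot\nabla v\,dx=(p_0-1)\chi\|u\|_{p_0+1}^{p_0+1}-(p_0-1)\chi\int_{\mathcal{O}} u^{p_0}v\,dx,$$
and the final term is non-negative because $u\ge 0$ by Theorem~3.1 and $v=G*u\ge 0$, so it can be dropped from the upper bound. Using (H-2) to bound $p_0\int u^{p_0-1}g(u)\,dx\le p_0c_1\|u\|_{p_0-1}^{p_0-1}-p_0\mu\|u\|_{p_0+1}^{p_0+1}$, the net coefficient of $\|u\|_{p_0+1}^{p_0+1}$ in the drift is $(p_0-1)\chi-p_0\mu=:-\delta$, which is negative by the hypothesis $p_0<\chi/(\chi-\mu)^+$. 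The It\^o correction, bounded using (H-1) by a constant multiple of $\|u\|_{p_0}^{p_0}+\|u\|_{p_0-2}^{p_0-2}$, together with the remaining lower-order norms $\|u\|_{p_0-1}^{p_0-1}$ and $\|u\|_{p_0}^{p_0}$, are all reduced by H\"older and Young's inequality to the form $\eta\|u\|_{p_0+1}^{p_0+1}+C_\eta$ (using $|\mathcal{O}|<\infty$), and choosing $\eta$ small produces the pathwise inequality
$$d\|u\|_{p_0}^{p_0}\le\bigl(-\tfrac{\delta}{2}\|u\|_{p_0+1}^{p_0+1}+C\bigr)\,dt+dM_t.$$
Jensen's inequality $\|u\|_{p_0+1}^{p_0+1}\ge|\mathcal{O}|^{-1/p_0}\|u\|_{p_0}^{p_0+1}$ combined with the elementary $x^{(p_0+1)/p_0}\ge\alpha x-\alpha^{p_0+1}$ (valid for every $\alpha>0$) converts this to the linear-dissipation form $d\|u\|_{p_0}^{p_0}\le(-\mu_0\|u\|_{p_0}^{p_0}+C_1)\,dt+dM_t$ for constants $\mu_0,C_1>0$ independent of $T$ and $m$, once $\alpha$ is chosen large.

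Taking expectation and applying Gronwall to the resulting ODE inequality for $\mathbb{E}\|u(t\wedge\tau_m)\|_{p_0}^{p_0}$ yields the uniform-in-time first-moment bound $\sup_{t\ge 0}\mathbb{E}\|u(t\wedge\tau_m)\|_{p_0}^{p_0}\le\|u_0\|_{p_0}^{p_0}+C_1/\mu_0$. The main obstacle is upgrading this to the $\mathbb{E}\sup$ bound with a $T$-independent constant: from the exponential rescaling $d(e^{\mu_0 t}\|u\|_{p_0}^{p_0})\le C_1e^{\mu_0 t}dt+e^{\mu_0 t}dM_t$ one obtains $\|u(t)\|_{p_0}^{p_0}\le\|u_0\|_{p_0}^{p_0}+C_1/\mu_0+e^{-\mu_0 t}\int_0^te^{\mu_0 s}dM_s$, and stochastic integration by parts gives the pointwise bound $|e^{-\mu_0 t}\int_0^te^{\mu_0 s}dM_s|\le 2\sup_{s\le t}|M_s|$. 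What remains is a BDG estimate on $M$ itself with quadratic-variation bound $d[M]_t\le Cp_0^2\|u\|_{p_0}^{p_0}(\|u\|_{p_0}^{p_0}+\|u\|_{p_0-2}^{p_0-2})dt$ (from Cauchy-Schwarz on the $\ell^2$-sum), to be combined with Young's inequality to absorb $\mathbb{E}\sup\|u\|_{p_0}^{p_0}$ from the right and with the uniform first-moment bound to control the residual $\mathbb{E}\int_0^t\|u\|_{p_0}^{p_0}\,ds$. Finally, letting $m\to\infty$ and applying Fatou's lemma together with $\tau_m\uparrow\tau$ transfers the estimate to the maximal local solution.
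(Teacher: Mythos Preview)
Your core strategy is the same as the paper's: apply the $L^{p_0}$ It\^o formula of Theorem~4.2, convert the cross term via the elliptic identity $\Delta v=v-u$, use the logistic bound (H-2) to make the coefficient of $\|u\|_{p_0+1}^{p_0+1}$ equal to $-\delta=(p_0-1)\chi-p_0\mu<0$, absorb lower-order terms by Young's inequality, handle the stochastic integral by BDG with the Young-splitting $\frac12\mathbb{E}\sup\|u\|_{p_0}^{p_0}+C\mathbb{E}\int_0^T\int\|\sigma(u)\|_{\ell^2}^2|u|^{p_0-2}$, and pass $m\to\infty$. Up to that point your proposal is correct and matches the paper.

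Where you diverge is the detour through Jensen's inequality, linear dissipation, exponential rescaling, and the stochastic integration-by-parts bound $|e^{-\mu_0 t}\int_0^t e^{\mu_0 s}\,dM_s|\le 2\sup_{s\le t}|M_s|$, all aimed at producing a $T$-independent constant. This detour does not reach its goal: at the last step you still have to control the BDG residual $\mathbb{E}\int_0^{T}\|u(s)\|_{p_0}^{p_0}\,ds$, and your uniform first-moment bound only gives $\int_0^T C\,ds=CT$, which grows with $T$. So the extra machinery buys nothing. The paper simply applies Gronwall to the post-BDG inequality
\[
\mathbb{E}\sup_{t\in[0,T]}\|u(t\wedge\tau_m)\|_{p_0}^{p_0}\le \|u_0\|_{p_0}^{p_0}+C+C\,\mathbb{E}\int_0^{T\wedge\tau_m}(\|u\|_{p_0}^{p_0}+1)\,ds
\]
and accepts a constant depending on $T$; despite the subscript notation $C_{p_0,\|u_0\|_\infty}$, that is all the paper's own proof delivers, and it is all that is used downstream (Lemma~4.3 and the global existence theorem work at fixed $T$). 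You should therefore drop the Jensen/rescaling/IBP layer and close exactly as the paper does.
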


\begin{proof}
For $m\geq 1$, we apply Theorem \ref{thm 4.8} to obtain
\begin{align}\label{lem 1}
&\|u(t\wedge\tau_m)\|_{p_0}^{p_0} - \|u_0\|_{p_0}^{p_0} + {p_0}({p_0} - 1) \int_0^{t\wedge\tau_m} \|\nabla u^{\frac{p}{2}}(s)\|_2^2 \, ds\nonumber\\
&= {p_0}({p_0}-1)\chi\int_0^{t\wedge\tau_m}\int_\mathcal{O} u^{{p_0}-1}(s)\nabla u(s)\cdot \nabla G*u(s) \, dx \, ds \nonumber \\
&\quad + {p_0}\int_{0}^{t\wedge\tau_m}\int_\mathcal{O} u^{{p_0}-1}(s)g(u(s)) \, dx \, ds \nonumber \\
&\quad+ {p_0}\sum_{k=1}^\infty\int_0^{t\wedge\tau_m}\int_\mathcal{O}\sigma_k(u(s))u(s)|u(s)|^{{p_0}-2} \, dx \, dW^k_s \nonumber \\
&\quad+ \frac{{p_0}({p_0}-1)}{2}\int_{0}^{t\wedge\tau_m}\int_\mathcal{O}\sum_{k=1}^\infty|\sigma_k(u(s))|^2|u(s)|^{{p_0}-2} \, dx \, ds.
\end{align}

Multiplying the second equation in (\ref{main}) by $u^{p_0}$, we obtain
\begin{align}\label{lem 2}
{p_0}\int_\mathcal{O} u^{{p_0}-1}(t)\nabla u\cdot\nabla G*u(t) \, dx = -\int_\mathcal{O} u^{p_0}(t) G*u(t) \, dx + \int_\mathcal{O} u^{{p_0}+1}(t) \, dx.
\end{align}

Substituting (\ref{lem 2}) into (\ref{lem 1}), keeping in mind that $u(t)$ is non-negative and using the condition (\ref{g}), we have
\begin{align}\label{lem 3}
&\|u({t\wedge\tau_m})\|_{p_0}^{p_0} - \|u(0)\|_{p_0}^{p_0} + {p_0}({p_0} - 1) \int_0^{t\wedge\tau_m} \|\nabla u^{\frac{p}{2}}(s)\|_2^2 \, ds\nonumber\\
&= -({p_0}-1)\chi\int_0^{t\wedge\tau_m} \int_\mathcal{O} u^{p_0}(s) G*u(s) \, dx \, ds\nonumber \\
&\quad+ ({p_0}-1)\chi \int_{0}^{t\wedge\tau_m} \int_\mathcal{O} u^{{p_0}+1}(s) \, dx \, ds \nonumber \\
&\quad+ {p_0}\int_{0}^{t\wedge\tau_m}\int_\mathcal{O} u^{{p_0}-1}(s)g(u(s)) \, dx \, ds \nonumber \\
&\quad+ {p_0}\sum_{k=1}^\infty\int_0^{t\wedge\tau_m}\int_\mathcal{O}\sigma_k(u(s))u(s)|u(s)|^{{p_0}-2} \, dx \, dW^k_s \nonumber \\
&\quad+ \frac{{p_0}({p_0}-1)}{2}\int_{0}^{t\wedge\tau_m}\int_\mathcal{O}\|\sigma(u(s))\|_{\ell^2}^2|u(s)|^{{p_0}-2} \, dx \, ds \nonumber \\
&\leq -({p_0}\mu-({p_0}-1)\chi)\int_0^{t\wedge\tau_m} \int_\mathcal{O} u^{{p_0}+1}(s) \, dx \, ds + c_1{p_0}\int_{0}^{t\wedge\tau_m}\int_\mathcal{O} u^{{p_0}-1}(s) \, dx \, ds \nonumber \\
&\quad+ {p_0}\sum_{k=1}^\infty\int_0^{t\wedge\tau_m}\int_\mathcal{O}\sigma_k(u(s))u(s)|u(s)|^{{p_0}-2} \, dx \, dW^k_s \nonumber \\
&\quad+ \frac{{p_0}({p_0}-1)}{2}\int_{0}^{t\wedge\tau_m}\int_\mathcal{O}\|\sigma(u(s))\|_{\ell^2}^2|u(s)|^{{p_0}-2} \, dx \, ds.
\end{align}

Since $\delta = {p_0}\mu-({p_0}-1)\chi > 0$ for $\mu > \frac{\chi}{2}$ and ${p_0} \in (2, \frac{\chi}{(\chi-\mu)^+})$, we can apply Young's inequality to bound
\begin{align*}
c_1{p_0}\int_\mathcal{O} u^{{p_0}-1}(s) \, dx \leq \frac{\delta}{2}\int_\mathcal{O} u^{{p_0}+1}(s) \, dx + C,
\end{align*}
and the BDG inequality along with Young's inequality yields
\begin{align*}
&\mathbb{E}\sup_{t\in[0,T]}\sum_{k=1}^\infty\int_0^{t\wedge\tau_m}\int_\mathcal{O}\sigma_k(u(s))u(s)|u(s)|^{{p_0}-2} \, dx \, dW^k_s\\
&\leq C\mathbb{E}\left(\int_0^{T\wedge\tau_m}\sum_{k=1}^\infty\left(\int_\mathcal{O}\sigma_k(u(s))u(s)|u(s)|^{{p_0}-2} \, dx\right)^2 \, ds\right)^{1/2} \\
&\leq C\mathbb{E}\left(\sup_{s\in[0,T\wedge\tau_m]}\int_\mathcal{O}|u(s)|^{{p_0}} \, dx\right)^{1/2} \left(\int_0^{T\wedge\tau_m}\int_\mathcal{O}\|\sigma(u(s))\|_{\ell^2}^2|u(s)|^{{p_0}-2} \, dx \, ds\right)^{1/2} \\
&\leq \frac{1}{2}\mathbb{E}\sup_{t\in[0,T]}\|u({t\wedge\tau_m})\|_{p_0}^{p_0} + C\mathbb{E}\left(\int_0^{T\wedge\tau_m}\int_\mathcal{O}\|\sigma(u(s))\|_{\ell^2}^2|u(s)|^{{p_0}-2} \, dx \, ds\right).
\end{align*}

Taking the supremum over $t$ and the expectation in (\ref{lem 3}), and incorporating the above inequalities, we deduce
\begin{align*}
&\mathbb{E}\sup_{t\in[0,T]}\|u({t\wedge\tau_m})\|_{p_0}^{p_0} + \frac{\delta}{2}\mathbb{E}\sup_{t\in[0,T]}\int_\mathcal{O} u^{{p_0}+1}(t) \, dx
\\
&\leq \|u(0)\|_{p_0}^{p_0} + C + C\mathbb{E}\left(\int_0^{T\wedge\tau_m}\int_\mathcal{O}\|\sigma(u(s))\|_{\ell^2}^2|u(s)|^{{p_0}-2} \, dx \, ds\right) \\
&\leq \|u(0)\|_{p_0}^{p_0} + C + C\mathbb{E}\left(\int_0^{T\wedge\tau_m}\int_\mathcal{O}|u(s)|^{{p_0}} + 1 \, dx \, ds\right).
\end{align*}

Applying the Gronwall inequality leads to
\begin{align*}
\mathbb{E}\sup_{t\in[0,T]}\|u({t\wedge\tau_m})\|_{p_0}^{p_0} \leq C_{{p_0},\|u_0\|_\infty}.
\end{align*}
Since the constant on the right-hand side is independent of $m$, we let $m$ tend to infinity to complete the proof.
\hfill $\Box$\end{proof}

\begin{lem}\label{lem 4.10}
Suppose $g$ satisfies condition (\ref{g}) with $\mu>\frac{\chi}{2}$. Then, there exists a constant $C_{\|u_0\|_\infty}>0$ such that for all $T>0$, we have
\begin{align*}
    \mathbb{E}\sup_{t\in  [0,T\wedge\tau]}\|u(t)\|_\infty \leq C_{\|u_0\|_\infty}.
\end{align*}
\end{lem}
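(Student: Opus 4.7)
The plan is to estimate $\|u(t)\|_\infty$ using the mild formulation
$$u(t) = e^{-tA}u_0 - \int_0^t e^{-(t-s)A}\nabla\cdot(\chi u(s)\nabla v(s))\,ds + \int_0^t e^{-(t-s)A}g(u(s))\,ds + M(t),$$
and to estimate the four terms on the right separately in $L^\infty$, using the $L^{p_0}$ a priori bound from Lemma~\ref{lem gamma} as the key input.

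First I would use that the Neumann heat semigroup is an $L^\infty$-contraction to get $\|e^{-tA}u_0\|_\infty\le\|u_0\|_\infty$. For the chemotaxis convolution, I fix $\beta\in(\frac{1}{p_0},\frac{1}{2})$ and $\epsilon\in(0,\frac{1}{2}-\beta)$, invoke the embedding $D((A+1)^\beta)\hookrightarrow C(\bar{\mathcal O})$ together with the gradient-smoothing estimate~(\ref{A4}), and apply H\"older with the elliptic regularity~(\ref{w}) (taking exponent $r=p_0>2$) to obtain
$$\left\|\int_0^t e^{-(t-s)A}\nabla\cdot(\chi u\nabla v)(s)\,ds\right\|_\infty \le C\int_0^t(t-s)^{-\frac{1}{2}-\beta-\epsilon}e^{-\nu_1(t-s)}\|u(s)\|_{p_0}^2\,ds.$$
Since $\frac{1}{2}+\beta+\epsilon<1$, the kernel is integrable, so taking the supremum in $t$, then expectation, and applying Jensen together with Lemma~\ref{lem gamma} controls this term by a constant of the desired form.

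For the reaction convolution the key is sign structure: since $u\ge 0$ almost surely by Theorem~\ref{thm l} and $g(u)\le c_1-\mu u^2\le c_1$, positivity of the Neumann semigroup gives $(e^{-(t-s)A}g(u(s)))(x)\le c_1$ pointwise, hence this piece contributes at most $c_1 T$ to the upper bound on $u$; the possibly very large negative part $g(u)^-$ only improves the estimate and can be discarded (we already know $u\ge 0$, so only an upper bound on $u$ is needed). For the stochastic convolution $M(t)$ I would apply Lemma~\ref{M} with $\eta=1$ and parameters $q=p_0/2$ and $r>p_0/(p_0-2)$, so that $\frac{1}{r}+\frac{1}{q}<1$; assumption {\bf (H-1)} then yields $\|\|\sigma(u(s))\|_{\ell^2}\|_{2q}\le K(\|u(s)\|_{p_0}+C)$, and Lemma~\ref{lem gamma} closes the estimate.

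The main technical obstacle is the simultaneous choice of the exponents $\beta,\epsilon,q,r$ so that the $C(\bar{\mathcal O})$-embedding, the non-singular time integrability $\frac{1}{2}+\beta+\epsilon<1$, the bilinear factorisation $\|u\nabla v\|_{p_0}\le C\|u\|_{p_0}^2$, and the Lemma~\ref{M} constraint $\frac{1}{r}+\frac{1}{q}<1$ are all met at once; this is feasible precisely because Lemma~\ref{lem gamma} delivers an admissible $p_0>2$, which itself required $\mu>\frac{\chi}{2}$.
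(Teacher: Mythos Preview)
Your proposal is correct and follows essentially the same route as the paper: split the mild formulation into the four pieces, use the embedding $D((A_{p_0}+1)^\beta)\hookrightarrow C(\bar{\mathcal O})$ together with (\ref{A4}) and (\ref{w}) for the chemotaxis integral, exploit positivity of the Neumann semigroup and $g(u)\le c_1$ for the reaction term, and invoke Lemma~\ref{M} with $2q=p_0$ for the stochastic convolution, closing everything with Lemma~\ref{lem gamma} via Jensen. The only cosmetic difference is that the paper takes $r=\infty$ in Lemma~\ref{M} (so $\frac{1}{r}+\frac{1}{q}=\frac{2}{p_0}<1$ is automatic), whereas you choose a finite $r>p_0/(p_0-2)$; both choices work and lead to the same conclusion.
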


\begin{proof}
Let  $p_0\in (2,\frac{\chi}{(\chi-\mu)^+})$.
Since $u$ is non-negative, we have $\|u(t)\|_\infty=\sup_{x\in \mathcal{O}}u(t,x)$. Keeping this in mind and using the assumptions on the function $g$,  similar to the proof of  equation (\ref{Phi}), we can derive that for $t<\tau$,
\begin{align}\label{4 I}
    \|u(t)\|_\infty \leq & \|e^{-tA}u_0\|_\infty + C \int_0^{t}\|(A_{p_0}+1)^\beta e^{-(t-s)A}\nabla\cdot(u\nabla G*u)(s)\|_{p_0} \, ds  \nonumber\\
    & + \sup_{x\in \mathcal{O}}(\int_0^{t}e^{-(t-s)A}g(u(s)) \, ds )+ \|\int_0^{t} e^{-(t-s)A}\sigma(u(s)) \, dW_s\|_{\infty}
    \nonumber\\
    \leq & C\|u_0\|_\infty + I_1(t) + C_T + I_2(t).
\end{align}

Using condition (\ref{A2}), for $\epsilon\in (0,\frac{1}{2}-\beta)$, we estimate $I_1(t)$ as follows:
\begin{align*}
    I_1(t) \leq & C_{\epsilon}\int_0^{t} (t-s)^{-\beta-\frac{1}{2}-\epsilon} \|(u\nabla G*u)(s)\|_{p_0} \, ds  \nonumber\\
    \leq & C_{\epsilon}\int_0^{t} (t-s)^{-\beta-\frac{1}{2}-\epsilon} \|u(s)\|_{p_0}^2 \, ds  \nonumber\\
    \leq & C \sup_{s\in[0,{t}]} \|u(s)\|_{p_0}^{2} t^{\frac{1}{2}-\beta-\epsilon}.
\end{align*}


To estimate $I_2$, we apply Lemma \ref{M}, choosing $\eta=1$,$r=\infty$, and $q=\frac{p_0}{2}$ in (\ref{Mt}), to get
\begin{align*}
    \mathbb{E}\sup_{t\in[0,T\wedge \tau]} I_2(t) \leq C \mathbb{E}\sup_{t\in[0,T\wedge\tau]} \|\|\sigma(u(t))\|_{\ell^2}\|_{p_0} \leq C \left( \mathbb{E}\sup_{t\in[0,T\wedge\tau]} \|u(s)\|_{p_0} + 1 \right).
\end{align*}

Now, taking the supremum up to $ T\wedge\tau$ and taking the expectation in (\ref{4 I}), and applying the estimates we obtained for $I_1(t)$, $I_2(t)$, we deduce that
\begin{align*}
    \mathbb{E}\sup_{t\in  [0,T\wedge\tau]}\|u(t)\|_\infty &\leq C\mathbb{E}\sup_{t\in  [0,T\wedge\tau]}\|u(t)\|_{p_0}^{2}+C\mathbb{E}\sup_{t\in  [0,T\wedge\tau]}\|u(t)\|_{p_0} \\
    &\leq C\left(\mathbb{E}\sup_{t\in  [0,T\wedge\tau]}\|u(t)\|_{p_0}^{p_0}\right)^{\frac{2}{p_0}} +C\left(\mathbb{E}\sup_{t\in  [0,T\wedge\tau]}\|u(t)\|_{p_0}^{p_0}\right)^{\frac{1}{p_0}}\\
    &\leq C_{\|u_0\|_\infty},
\end{align*}
where Lemma \ref{lem gamma} was used in the last step.
\hfill $\Box$\end{proof}

\begin{thm}
 Under the assumption of Theorem \ref{Thm main}, the system (\ref{main}) admits a unique global mild solution.
\end{thm}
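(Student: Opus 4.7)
The plan is to promote the maximal local solution $(u,v,\tau)$, already constructed in this section as the monotone limit $\tau=\lim_{m\to\infty}\tau_m$, to a global one by proving that $\tau=+\infty$ almost surely. Once this is done, uniqueness on $[0,\infty)$ is immediate from Theorem \ref{uniqueness} applied on every finite window $[0,T]$, and nonnegativity is inherited from the local construction of Theorem \ref{thm l}. Hence Theorem \ref{Thm main} reduces to proving $\mathbb{P}(\tau<\infty)=0$.

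The approach is to combine the explosion criterion built into the definition of $\tau_m$ with the uniform a priori $L^\infty$-bound furnished by Lemma \ref{lem 4.10}. Fix an arbitrary $T>0$. On the event $\{\tau_m\leq T\}$ the definition (\ref{eq stopping time 1}), together with the identity $u=u_m$ on $[0,\tau_m)$, forces $\sup_{s\in[0,\tau_m]}\|u(s)\|_\infty\geq m$, and since $\tau_m\leq T\wedge\tau$ on this event one obtains
\begin{align*}
\{\tau_m\leq T\}\ \subseteq\ \Bigl\{\sup_{s\in[0,T\wedge\tau]}\|u(s)\|_\infty\geq m\Bigr\}.
\end{align*}
Markov's inequality combined with Lemma \ref{lem 4.10} then yields
\begin{align*}
\mathbb{P}(\tau_m\leq T)\ \leq\ \frac{1}{m}\,\mathbb{E}\sup_{s\in[0,T\wedge\tau]}\|u(s)\|_\infty\ \leq\ \frac{C_{\|u_0\|_\infty}}{m},
\end{align*}
with a constant independent of $T$ and $m$.

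Since $\tau_m\uparrow\tau$, the events $\{\tau_m\leq T\}$ are nested and $\{\tau\leq T\}\subseteq\bigcap_{m\geq 1}\{\tau_m\leq T\}$; sending $m\to\infty$ therefore yields $\mathbb{P}(\tau\leq T)=0$ for every $T>0$, and hence $\mathbb{P}(\tau<\infty)=0$ by a countable union along $T\in\mathbb{N}$. Consequently $(u,v)$ extends to a global mild solution of (\ref{main}), as required.

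Essentially all of the analytical difficulty lies upstream, in the $L^{p_0}$-moment estimate of Lemma \ref{lem gamma} and its bootstrap to $L^\infty$ in Lemma \ref{lem 4.10}. The one piece of bookkeeping that still requires care here is the set inclusion above: criterion (\ref{eq stopping time 1}) is phrased in terms of the cut-off process $u_m$, while Lemma \ref{lem 4.10} controls the global process $u$, so one must exploit $u=u_m$ on $[0,\tau_m)$ together with the monotonicity of $t\mapsto\sup_{s\in[0,t]}\|u(s)\|_\infty$ to transfer the bound. Once this identification is made, the conclusion reduces to a short Markov-plus-continuity argument with no further analytical input.
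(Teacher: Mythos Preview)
Your proposal is correct and follows essentially the same approach as the paper: both use the inclusion $\{\tau_m\le T\}\subseteq\{\sup_{s\in[0,T\wedge\tau]}\|u(s)\|_\infty\ge m\}$, apply Markov's inequality together with Lemma~\ref{lem 4.10}, and send $m\to\infty$ to conclude $\mathbb{P}(\tau\le T)=0$, with uniqueness coming from Theorem~\ref{uniqueness}. Your write-up is in fact slightly more explicit than the paper's in justifying the set inclusion via $u=u_m$ on $[0,\tau_m)$ and in passing from $\mathbb{P}(\tau\le T)=0$ for all $T$ to $\mathbb{P}(\tau<\infty)=0$.
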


\begin{proof}
Let $(u,v,\tau)$ be the maximal local solution of the system (\ref{main}). Applying Markov's inequality along with Lemma \ref{lem 4.10}, we find that
\begin{align*}
    \mathbb{P}(\tau\leq T) =& \lim_{m\rightarrow\infty}\mathbb{P}(\tau_m\leq T) \nonumber\\
    \leq & \lim_{m\rightarrow\infty}\mathbb{P}\left( \sup_{t\in[0,T\wedge\tau]}\|u(t)\|_\infty \geq m \right) \nonumber\\
    \leq & \lim_{m\rightarrow\infty}\frac{1}{m}\mathbb{E}\sup_{t\in  [0,T\wedge\tau]}\|u(t)\|_\infty \nonumber\\
    =& 0.
\end{align*}
Thus, we conclude that $\tau > T$ for all $T > 0$, establishing the global existence of solutions. The uniqueness was proved in Theorem {\ref{uniqueness}}.
\hfill $\Box$\end{proof}

\section{The case of nonlinear noise}\label{section5}
\setcounter{equation}{0}
In this section, we will show that there exists a unique solution to the stochastic chemotaxis system (\ref{main2}), that is, complete the proof of Theorem 2.3. We first recall the definition of local mild solutions.
\begin{defn}\label{def local solution'}
	We say that $(u,v,\tau)$ is a local mild solution to the system (\ref{main2}) if
	\begin{enumerate}
    \item$\tau$ is a stopping time, and $(u, v)$ is a progressively measurable stochastic process that takes values in $C(\bar{\mathcal{O}})\times W^{1,\infty}(\mathcal{O})$.

    \item There exists a non-decreasing sequence of stopping times $\{\tau_l, l \geq 1\}$ such that $\tau_l \uparrow \tau$ almost surely as $l \to \infty$, and for each $l$, the pair $(u(t), v(t))$ satisfies:
   \begin{align*}
    u(t) &= e^{-tA} u_0 - \int_0^{t} e^{-(t - s) A} \left( \nabla \cdot \left( \chi u(s) \nabla v(s) \right)-g(u(s))\right)\, ds\\
    &\quad + \sum_{i=1}^{\infty}b_i\int_0^t e^{-(t - s) A}\|u(s)\|_q^ru(s)  \, dW^i_s,\\
    v(t) &= G * u(t),
    \end{align*}
    for all $t \leq \tau_l$.
\end{enumerate}
\end{defn}
Similarly as in Section 3, let $u_m$ be the solution of the localized equation:
\begin{align*}
du &= \Delta u  \, dt - \theta_m(\|u\|_{\Upsilon^u_t})\nabla \cdot (\chi u \nabla (G * u))  \, dt + \theta_m(\|u\|_{\Upsilon^u_t})g(u) \, dt \nonumber\\
&\quad \quad + \sum_{i=1}^{\infty}b_i\theta_m(\|u\|_{\Upsilon^u_t})\|u(t)\|_q^ru(t)\,  \, dW^i_t. \quad
\end{align*}

Recall
\begin{align*}
    \tau_m = \inf \left\{ t > 0 \mid \|u_m\|_{\Upsilon^u_t} \geq m \right\}.
\end{align*}
Observe that $\|u(s)\|_q^r$ is bounded above by $Cm^r$ before the stopping time $\tau_m$. The proof of the following theorem is entirely similar to that of Theorem 3.1, Theorem 3.2. in Section 3. It is omitted.

\begin{thm}\label{thm l'}
Suppose that $\sum_{i=1}^\infty b_i^2< \infty$, for any $g\in C^1([0,\infty))$  and nonnegative $u_0\in C(\bar{\mathcal{O})}$, there exists a unique  nonnegative local maximal mild solution  $(u,v,\tau)$  to the system (\ref{main2}).
\end{thm}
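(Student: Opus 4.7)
The plan is to follow the same two-step scheme used in Theorems \ref{thm l} and \ref{uniqueness}: construct a solution to a cut-off version of \eqref{main2} via the Banach fixed point theorem, then stitch together such solutions across the stopping times $\tau_m = \inf\{t > 0 : \|u_m\|_{\Upsilon^u_t} \geq m\}$ to produce the maximal local solution. The only genuinely new ingredient is the verification that the nonlinear coefficient $\sigma_i(u) = b_i \|u\|_q^r u$, once truncated by $\theta_m(\|u\|_{\Upsilon^u_t})$, yields Lipschitz estimates in $\ell^2$ that are compatible with the stochastic convolution bound in Lemma \ref{M}.

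Concretely, under the cut-off we have $\|u(s)\|_q \leq 2m|\mathcal{O}|^{1/q}$, so $\|u\|_q^r \leq C m^r$ on $\{\|u\|_{\Upsilon^u_s} \leq 2m\}$. Combining the mean value theorem applied to $t \mapsto t^r$ on $[0, C m]$ with the Lipschitz continuity of $\theta_m$, one obtains
\[
\bigl\| \theta_m(\|u_1\|_{\Upsilon^u_s}) \|u_1\|_q^r u_1 - \theta_m(\|u_2\|_{\Upsilon^u_s}) \|u_2\|_q^r u_2 \bigr\|_\infty \leq C_m \|u_1 - u_2\|_{\Upsilon^u_s},
\]
so the truncated noise is globally Lipschitz in the $\ell^2(L^\infty)$ sense with an $m$-dependent constant, and is bounded by $C m^{r+1} (\sum_i b_i^2)^{1/2}$. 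Define the Picard map $\Phi$ exactly as in the proof of Theorem \ref{thm l}, with the noise integral replaced by $\sum_i b_i \int_0^t e^{-(t-s)A} \theta_m(\|u\|_{\Upsilon^u_s}) \|u(s)\|_q^r u(s)\, dW^i_s$. The deterministic contributions are estimated verbatim as in the bounds for $J_1, J_2$ of Theorem \ref{thm l}; the stochastic term is controlled by Lemma \ref{M} together with the above $\ell^2$-Lipschitz bound. One then obtains a small positive power of $T$ in front of the contraction constant, so $\Phi$ is a contraction on $S_{T_m}$ for a suitable $T_m > 0$ depending only on $m$, yielding a unique $u_m \in S_T$ for every $T > 0$ by iterating on intervals $[kT_m, (k+1)T_m]$.

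Non-negativity follows from the same device as in Theorem \ref{thm l}: on $[0, \tau_m]$, $u_m$ solves a linear SPDE with drift coefficients $b(t,x), c(t,x)$ as in Section 3 and multiplicative noise $\nu(t,x) = I_{\{u \ne 0\}} (b_i \|u\|_q^r)_{i\geq 1}$, which is bounded under the cut-off, and \cite[Theorem 5.12]{NK} applies. The maximal stopping time $\tau = \lim_{m\to\infty} \tau_m$ and the local solution $(u,v,\tau)$ are then constructed as in Section 4, using the fact that consistency $u_{m+1} = u_m$ on $[0,\tau_m]$ is guaranteed by uniqueness. Uniqueness itself is proven by repeating the Gronwall-type argument of Theorem \ref{uniqueness} with the same $\ell^2$-Lipschitz bound replacing hypothesis (H-1).

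The main obstacle, though technical rather than conceptual, is that the coefficient $\|u\|_q^r$ is \emph{nonlocal} in space: it couples all spatial points and is not pointwise Lipschitz in $u$. This must be handled when estimating the $L^\infty$ norm of the stochastic convolution, since Lemma \ref{M} requires a pointwise $\ell^2$-bound on the noise. The cut-off reduces $\|u\|_q^r$ to a uniform constant $C m^r$, after which the $L^\infty$ regularity of the stochastic convolution and the contraction argument proceed exactly as in Section 3.
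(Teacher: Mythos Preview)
Your proposal is correct and follows precisely the approach indicated by the paper, which in fact omits the proof entirely, merely observing that $\|u(s)\|_q^r \leq Cm^r$ under the cut-off and referring back to Theorems \ref{thm l} and \ref{uniqueness}. You have supplied exactly the details the paper leaves implicit, including the need to insert $\theta_m$ in front of the noise term and the $\ell^2$-Lipschitz estimate for the truncated nonlocal coefficient.
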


To prove Theorem \ref{Thm main2},  it suffices to demonstrate the global existence of the solution. Similar to the discussion at the beginning of Section \ref{section4}, we have
$$
\limsup_{t \nearrow \tau} \sup_{s \in [0, t]} \left(\|u(s)\|_\infty + \|v(s)\|_{1, \infty}\right) = \infty \text{ on } \{\tau < \infty\} \text{ a.s.}.
$$

The following It\^{o} formula for $\|u(t)\|_{p}^{p}$ can be proved in a similar manner as Theorem \ref{thm 4.8} using approximations.
\begin{thm}\label{thm 5.1}
    Let $(u, v, \tau_m)$ be a local mild solution of system (\ref{main2}), with $\tau_m$ defined by
    \begin{align*}
        \tau_m = \inf \{ t > 0 \mid \|u\|_{\Upsilon^u_t} \geq m \}.
    \end{align*}
    Then, for $p\geq 2$, the following It\^{o} formula for $\|u(t)\|_{p}^{p}$ holds $\mathbb{P}$-a.s. for all $t \in [0, \tau_m]$,
    \begin{align}\label{ito’}
        &\|u(t)\|_{p}^{p} - \|u_0\|_{p}^{p} +p(p - 1)  \int_0^{t}\|\nabla u^{\frac{p}{2}}(s)\|_2^2 \, ds \nonumber \\
        &= p(p - 1) \chi \int_0^{t} \int_\mathcal{O} u^{p - 1}(s) \nabla u(s) \cdot \nabla G * u(s) \, dx \, ds \nonumber \\
        &\quad + p \int_0^{t} \int_\mathcal{O} u^{p - 1}(s) g(u(s)) \, dx \, ds \nonumber \\
        &\quad + p \sum_{i=1}^\infty \int_0^{t} \int_\mathcal{O} b_i\|u(s)\|_q^{r}|u(s)|^{p} \, dx \, dW^i_s \nonumber \\
        &\quad + \frac{p(p - 1)}{2} \int_0^{t} \|u(s)\|_q^{2r}\int_\mathcal{O} \sum_{i=1}^\infty b_i^2 |u(s)|^{p} \, dx \, ds.
    \end{align}
\end{thm}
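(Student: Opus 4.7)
The plan is to mirror the proof of Theorem \ref{thm 4.8}, replacing the linear-growth noise $\sigma(u)$ by the nonlinear multiplier $\|u(s)\|_q^r u(s)$ and exploiting the fact that the latter is a scalar multiple (in $x$) of $u(s)$, with the coefficient $\|u(s)\|_q^r$ deterministically bounded by $Cm^r$ on $[0,\tau_m]$. First I introduce the Yosida-regularized process
$$U_n(t) = e^{-tA} R_n u_0 - \int_0^t e^{-(t-s)A}\nabla\cdot R_n\bigl(\chi u(s)\nabla G*u(s)\bigr)\,ds + \int_0^t e^{-(t-s)A} R_n g(u(s))\,ds + \sum_{i=1}^\infty b_i \int_0^t e^{-(t-s)A} R_n\bigl(\|u(s)\|_q^r u(s)\bigr)\,dW^i_s,$$
with $R_n = nR(n,-A)$. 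Repeating the argument of Proposition \ref{prop} (Fubini and the stochastic Fubini theorem) shows $U_n(t)\in\mathcal{D}(A)$ almost surely and that $U_n$ satisfies the strong SPDE
$$dU_n = \bigl(-AU_n - \nabla\cdot R_n(\chi u\nabla G*u) + R_n g(u)\bigr)\,dt + \sum_{i=1}^\infty b_i R_n\bigl(\|u\|_q^r u\bigr)\,dW^i$$
in $L^2(\mathcal{O})$ on $[0,\tau_m]$.

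Second, I establish a convergence statement parallel to Lemma \ref{lem 4.7}. The bound $\|u(s)\|_\infty \leq m$ on $[0,\tau_m]$ yields $\|u(s)\|_q^r \leq C_m$, which together with $\sum_i b_i^2 < \infty$ forces the entire $\ell^2$-valued noise coefficient $(b_i\|u\|_q^r u)_{i\geq 1}$ to be bounded by $C_m$ uniformly in $n$ and $\omega$. The strong convergence $R_n\phi \to \phi$ in $L^p$, combined with the stochastic-convolution estimate of Lemma \ref{M} and It\^o's isometry, then delivers, after extracting a subsequence, $U_n \to u$ in $L^\infty([0,T\wedge\tau_m];C(\bar{\mathcal{O}}))$ and $\nabla U_n \to \nabla u$ in $L^2([0,T\wedge\tau_m];L^2(\mathcal{O}))$ almost surely.

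Third, I apply the classical It\^o formula to $\int_\mathcal{O} F_n(U_n(t))\,dx$ for the truncated power $F_n$ constructed in the proof of Theorem \ref{thm 4.8}. All deterministic terms, together with the term arising from $-AU_n$, are handled verbatim as in Theorem \ref{thm 4.8}. For the two new stochastic terms I exploit the factorization $F_n'(U_n)\,R_n(\|u\|_q^r u) = \|u\|_q^r\, F_n'(U_n)\,R_n u$, valid because $\|u\|_q^r$ is constant in $x$. Combined with $R_n u\to u$ in $L^p$, $F_n'(U_n)\to p|u|^{p-2}u$, and the uniform-in-$n$ bound $\|u\|_q^r \leq C_m$, It\^o's isometry and dominated convergence give convergence of the stochastic integrals in $L^2(\Omega)$, while the same argument on the quadratic-variation correction produces the required coefficient $\tfrac{p(p-1)}{2}\|u\|_q^{2r}\sum_i b_i^2 |u|^{p}$ after integration against $dx$ with $F_n''(U_n)\to p(p-1)|u|^{p-2}$.

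The principal obstacle is the passage to the limit in the stochastic integral: the noise coefficient is nonlocal in $x$ through the norm $\|u\|_q^r$, so one cannot apply pointwise dominated convergence inside the It\^o integral in a naive way. The scalar factorization above is precisely what circumvents this issue, reducing everything to the well-understood $L^p$-convergence of the Yosida resolvents on the pathwise bounded process $u$, and turning the limiting procedure into a routine application of dominated convergence inside $L^2(\Omega)$.
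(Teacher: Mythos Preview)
Your proposal is correct and follows essentially the same route the paper takes: the paper gives no detailed proof of Theorem \ref{thm 5.1} but simply states that it ``can be proved in a similar manner as Theorem \ref{thm 4.8} using approximations,'' and that is precisely what you carry out. Your additional observation that $\|u(s)\|_q^r$ is constant in $x$, so that $R_n(\|u\|_q^r u)=\|u\|_q^r R_n u$ and the nonlocal factor passes harmlessly through the spatial resolvent and the $dx$-integral, is the only new wrinkle relative to Theorem \ref{thm 4.8} and you have identified and handled it correctly.
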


We are now ready to prove the global existence of equation (\ref{main2}). First we have the following estimates whose proof is inspired by \cite{HLL}.
\begin{lem}\label{lem gamma'}
Under the assumption of Theorem \ref{Thm main2}, For $T>0$, there exists a constant $C_{q,\|u_0\|_\infty} > 0$ such that for $0<\eta<\frac{1}{q}$, the following inequality holds:
\begin{equation*}
\mathbb{P}\left( \sup_{t\in [0,T\wedge\tau]}\|u(t)\|_{q}^{q}\geq R\right)\leq \frac{C_{q,\|u_0\|_\infty}}{R^\eta}.
\end{equation*}
\end{lem}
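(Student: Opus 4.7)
The strategy is to apply an It\^o chain rule to $F(\|u(t)\|_q^q)$, where $F(x)=(1+x)^\eta$ with $\eta\in(0,1/q)$, starting from the $L^q$ It\^o formula of Theorem \ref{thm 5.1} (applied at $p=q$ and stopped at $\tau_m$). Writing $X(t)=\|u(t)\|_q^q$, the martingale increment of $dX$ is $q\sum_i b_i\|u\|_q^{r+q}dW_t^i$, so $d\langle X\rangle_t = q^2\bigl(\sum_i b_i^2\bigr)\|u\|_q^{2r+2q}dt$. The key algebraic observation is that the noise-induced first-order drift in $dX$ (of order $\|u\|_q^{2r+q}$, multiplied by $F'(X)\sim X^{\eta-1}$) and the second-order chain-rule contribution $\tfrac12 F''(X)\,d\langle X\rangle$ (multiplied by $F''(X)\sim X^{\eta-2}$) both reduce to the \emph{common} power $\|u\|_q^{q\eta+2r}$, and their coefficients sum to
\[
\tfrac{\eta q}{2}\Bigl(\sum_i b_i^2\Bigr)(q\eta-1)\|u\|_q^{q\eta+2r},
\]
which is strictly negative exactly when $q\eta<1$. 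This is the noise-driven dissipation that will prevent blow-up.

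Next I would bound the chemotactic and source drifts. Integration by parts using $-\Delta(G*u)+G*u=u$ converts the chemotactic contribution into $(q-1)\chi[-\int u^qG*u\,dx+\int u^{q+1}dx]$, whose first piece is nonpositive and dropped. The source term is bounded via (A-2) by $qF'(X)(c_2\int u^{q-1}dx+\mu'\int u^{q+n-1}dx)$. The two-dimensional Gagliardo-Nirenberg inequality applied to $u^{q/2}$ gives, for $\gamma>1$,
\[
\int u^{q\gamma}dx \leq C\|\nabla u^{q/2}\|_2^{2(\gamma-1)}\|u\|_q^q + C\|u\|_q^{q\gamma},
\]
and a subsequent Young step decomposes these integrals as $\epsilon\|\nabla u^{q/2}\|_2^2+C_\epsilon\|u\|_q^{q^2/(q-1)}$ for $\int u^{q+1}$ and $\epsilon\|\nabla u^{q/2}\|_2^2+C_\epsilon\|u\|_q^{q^2/(q-n+1)}$ for $\int u^{q+n-1}$ when $n\geq2$ (the case $n<2$ reduces to the first plus a constant). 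Choosing $\epsilon$ small, the $\|\nabla u^{q/2}\|_2^2$ pieces are absorbed into the natural dissipation $-q(q-1)F'(X)\|\nabla u^{q/2}\|_2^2$ inherited from the Laplacian in the It\^o formula.

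Multiplying the remaining polynomial residuals by $F'(X)\sim X^{\eta-1}$ converts them into terms of order $\|u\|_q^{q\eta+q/(q-1)}$ and $\|u\|_q^{q\eta+q(n-1)/(q-n+1)}$. The conditions in (A-1), $r>(2\vee n-1)/2$ and $q>2(2\vee n-1)r/(2r-2\vee n+1)$, are exactly equivalent to the strict inequalities $q/(q-1)<2r$ and (when $n\geq2$) $q(n-1)/(q-n+1)<2r$. A final Young's inequality then absorbs these residuals into a small fraction of the strictly negative noise-induced term $\|u\|_q^{q\eta+2r}$ from the first paragraph, leaving only an additive constant. The result is
\[
dF(X_t) + c\|u\|_q^{q\eta+2r}dt \leq C\,dt + dM_t \quad\text{on } [0,T\wedge\tau_m],
\]
for some $c>0$, with $M_t$ a continuous local martingale having $d\langle M\rangle_t = (F'(X))^2 q^2(\sum_i b_i^2)\|u\|_q^{2r+2q}dt$.

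To finish, take suprema and expectation, apply BDG to $\sup|M|$ together with the elementary inequality $\bigl[\sup X^{\eta}\cdot \int\|u\|_q^{q\eta+2r}ds\bigr]^{1/2}\leq \delta\sup F(X) + C_\delta\int\|u\|_q^{q\eta+2r}ds$, and absorb: a small multiple of $\mathbb{E}\sup F(X)$ returns to the left-hand side, and the remaining integral is controlled by a reserved fraction of the $c\int\|u\|_q^{q\eta+2r}ds$ term. This yields $\mathbb{E}\sup_{t\in[0,T\wedge\tau_m]}F(\|u(t)\|_q^q)\leq C_{q,\|u_0\|_\infty,T}$ with a constant independent of $m$; Fatou in $m\uparrow\infty$ and Markov's inequality give
\[
\mathbb{P}\Bigl(\sup_{t\in[0,T\wedge\tau]}\|u(t)\|_q^q\geq R\Bigr)\leq \frac{\mathbb{E}\sup F(X)}{(1+R)^\eta}\leq\frac{C}{R^\eta}.
\]
The main obstacle is the careful exponent bookkeeping in the preceding paragraph: every polynomial power of $\|u\|_q$ generated by the GN-Young procedure must be strictly dominated by $q\eta+2r$, and this threshold is precisely where (A-1) enters with no slack — the chemotactic residual saturates the inequality $q>2r/(2r-1)$ and the source residual saturates $q>2(n-1)r/(2r-n+1)$.
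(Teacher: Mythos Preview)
Your approach via the power Lyapunov function $(1+\|u\|_q^q)^\eta$ and a BDG closure is a genuine alternative to the paper's route, which instead applies It\^o to $\log(1+\|u\|_q^q)$ and finishes with the exponential-supermartingale maximal inequality. Your treatment of the drift is correct and parallels the paper: the elliptic identity converts the chemotactic term to $(q-1)\chi\int u^{q+1}$ plus a sign-definite piece, Gagliardo--Nirenberg on $u^{q/2}$ followed by Young produces residual powers of $\|u\|_q$ that, under {\bf (A-1)}, are strictly below the exponent $q+2r$ of the negative noise contribution, and the latter indeed carries the asymptotic coefficient $\tfrac{\eta q}{2}(\sum_i b_i^2)(q\eta-1)<0$ you computed.

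The gap is in the final absorption. Tracking constants, the quadratic variation obeys $\langle M\rangle_T\le \eta^2q^2(\sum_i b_i^2)\,\sup_t F(X_t)\int_0^T\Phi\,ds$ with $\Phi=(1+X)^{\eta+2r/q}$, while the retained dissipation has coefficient $c\sim \tfrac{\eta q}{2}(\sum_i b_i^2)(1-q\eta)$. Your splitting $[\sup F\cdot\int\Phi]^{1/2}\le\delta\sup F+C_\delta\int\Phi$ makes $\delta$ small only at the cost of $C_\delta\sim 1/\delta$; the system of two inequalities for $\mathbb{E}\sup F$ and $\mathbb{E}\int\Phi$ then closes only if the BDG contribution to $\int\Phi$ is beaten by $c$, and optimizing over $\delta$ this forces roughly $K_{BDG}^2\,\eta q<1-q\eta$, hence $q\eta$ strictly below a threshold depending on the universal BDG constant. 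So your argument proves the tail bound only for $\eta$ in a proper subinterval of $(0,1/q)$---enough for the downstream Lemma~\ref{lem 4.10'}, but not the lemma as stated. The paper sidesteps this entirely: from $\log(1+X_t)\le C+M_t-\tfrac{\eta}{2}\langle M\rangle_t$ one exponentiates and applies the maximal inequality to the nonnegative supermartingale $\exp(\eta M-\tfrac{\eta^2}{2}\langle M\rangle)$, which introduces no competing constant and yields the bound for every $\eta\in(0,1/q)$.
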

\begin{proof}
     Let $t\in [0, T]$. Using (\ref{ito’}) with $p=q$, applying the It\^{o} formula to the Lyapunov function $V(r)=\log(1+r)$, we obtain that, for $t\in [0,\tau_m]$
     \begin{align}\label{lem 1'}
         &\log(1+\|u(t)\|_{q}^{q})- \log(1+\|u_0\|_{q}^{q})+q(q - 1)  \int_0^{t}\frac{\|\nabla u^{\frac{q}{2}}(s)\|_2^2 }{1+\|u(s)\|_{q}^{q}}\, ds\nonumber\\
         &= q(q - 1) \chi \int_0^{t} \frac{\int_\mathcal{O} u^{q - 1}(s) \nabla u(s) \cdot \nabla G * u(s) \, dx}{1+\|u(s)\|_{q}^{q}} \, ds \nonumber \\
        &\quad + q \int_0^{t} \frac{\int_\mathcal{O} u^{q - 1}(s) g(u(s)) \, dx }{1+\|u(s)\|_{q}^{q}} \, ds \nonumber \\
        &\quad + q \sum_{i=1}^\infty \int_0^{t} \frac{\|u(s)\|_q^{r}\int_\mathcal{O}  b_i |u(s)|^{q} \, dx}{1+\|u(s)\|_{q}^{q}} \, dW^i_s \nonumber \\
        &\quad + \frac{q(q - 1)}{2} \int_0^{t} \frac{\|u(s)\|_q^{2r}\int_\mathcal{O} \sum_{i=1}^\infty b_i^2 |u(s)|^{q} \, dx}{1+\|u(s)\|_{q}^{q}} \, ds\nonumber\\
        &\quad - \frac{q^2}{2} \sum_{i=1}^\infty\int_0^{t} \frac{\|u(s)\|_q^{2r}\left(\int_\mathcal{O}  b_i |u(s)|^{q} \, dx\right)^2}{\left(1+\|u(s)\|_{q}^{q}\right)^2} \, ds.
     \end{align}
Multiplying the second equation in (\ref{main2}) by $u^{q}$, we obtain
\begin{align}\label{lem 2'}
{q}\int_\mathcal{O} u^{{q}-1}(s)\nabla u\cdot\nabla G*u(s) \, dx = -\int_\mathcal{O} u^{q}(s) G*u(s) \, dx + \int_\mathcal{O} u^{{q}+1}(s) \, dx.
\end{align}

Substituting (\ref{lem 2'}) into (\ref{lem 1'}), keeping in mind that $u(t)$ is non-negative and moving terms around, we obtain
\begin{align}\label{lem 3'}
         &\log(1+\|u(t)\|_{q}^{q})+q(q - 1)  \int_0^{t}\frac{\|\nabla u^{\frac{q}{2}}(s)\|_2^2 }{1+\|u(s)\|_{q}^{q}}\, ds+(q - 1) \chi \int_0^{t} \frac{\int_\mathcal{O} u^{q}(s) G * u(s) \, dx}{1+\|u(s)\|_{q}^{q}} \, ds\nonumber\\
         &\leq \log(1+\|u_0\|_{q}^{q})+  \int_0^{t} \frac{(q - 1) \chi \int_\mathcal{O} u^{q + 1}(s) \, dx+q \int_\mathcal{O} u^{q - 1}(s) g(u(s)) \, dx}{1+\|u(s)\|_{q}^{q}} \, ds \nonumber \\
        &\quad - \frac{q}{2} \sum_{k=1}^\infty b_i^2\int_0^{t} \frac{\|u(s)\|_q^{2q+2r}}{\left(1+\|u(s)\|_{q}^{q}\right)^2} \, ds. \nonumber \\
        &\quad + \frac{q(q - 1)}{2} \sum_{k=1}^\infty b_i^2\int_0^{t} \frac{\|u(s)\|_q^{q+2r}}{\left(1+\|u(s)\|_{q}^{q}\right)^2} \, ds\nonumber\\
        &\quad + q \sum_{i=1}^\infty \int_0^{t} \frac{\|u(s)\|_q^{r}\int_\mathcal{O}  b_i |u(s)|^{q} \, dx}{1+\|u(s)\|_{q}^{q}} \, dW^i_s\nonumber\\
        &=:\log(1+\|u_0\|_{q}^{q})+I_1+I_2+I_3+M(t).
     \end{align}
Before we proceed, recall the Gagliardo-Nirenberg interpolation inequality in the  bounded smooth domain $\mathcal{O}\subset \mathbb{R}^2$:
Assume that $1\leq q,r\leq\infty$ ,
 $k, j \in \mathbb{N}$ with $j <k$ and $p\in \mathbb{R}$ such that
\begin{align*}
    \frac{1}{p}=\frac{j}{2}+\theta (\frac{1}{r}-\frac{k}{2})+(1-\theta)\frac{1}{q},\quad\frac{j}{k}\leq \theta <1.
\end{align*}
 Then there exists a constant $C$ depending only on $k$, $q$, $ r$, $\theta$ and $\mathcal{O}$ such that:
 \begin{align}\label{5.4}
     \|u\|_{W^{j,p}(\mathcal{O})}\leq C\|u\|_{W^{k,r}(\mathcal{O})}^\theta\|u\|_q^{1-\theta}+ C\|u\|_q,\quad u\in L^q(\mathcal{O})\cap W^{k,r}(\mathcal{O}).
 \end{align}

  Using the condition (\ref{g'}) and the fact that $\|u\|_p^p\leq C(\|u\|_q^q+1)$ for all $p<q$, we have
\begin{align}
    I_1&\leq  \int_0^{t} \frac{(q - 1) \chi \int_\mathcal{O} u^{q + 1}(s) \, dx+q c_2 \int_\mathcal{O} u^{q - 1}(s)  \, dx+q \mu ' \int_\mathcal{O} u^{q+n - 1}(s)  \, dx}{1+\|u(s)\|_{q}^{q}} \, ds\nonumber\\
    &\leq \int_0^{t} \frac{C  \int_\mathcal{O} u^{q - 1 + 2\vee n}(s) \, dx+C}{1+\|u(s)\|_{q}^{q}} \, ds\nonumber\\
    &\leq C+C \int_0^{t} \frac{ \|u^\frac{q}{2}(s)\|_{W^{1,2}(\mathcal{O})}^\frac{2(2\vee n-1)}{q}\|u^\frac{q}{2}(s)\|_2^2+\|u^\frac{q}{2}(s)\|_2^{\frac{2(q+2\vee n-1)}{q}}}{1+\|u(s)\|_{q}^{q}} \, ds\nonumber\\
    &\leq C+\int_0^{t} \frac{\epsilon_1  \|u^\frac{q}{2}(s)\|_{W^{1,2}(\mathcal{O})}^2+ C_{\epsilon_1} \|u(s)\|_q^\frac{q^2}{q-2\vee n+1}+\|u(s)\|_q^{q+2\vee n-1}}{1+\|u(s)\|_{q}^{q}} \, ds\nonumber\\
    &\leq C+\int_0^{t} \frac{\epsilon_1  \|u^\frac{q}{2}(s)\|_{W^{1,2}(\mathcal{O})}^2+ \epsilon_2\|u(s)\|_q^{q+2r}+C_{\epsilon_1,\epsilon_2}}{1+\|u(s)\|_{q}^{q}} \, ds.\nonumber
\end{align}
Here, in the third inequality, we utilized the Gagliardo-Nirenberg interpolation inequality by setting the parameters $(p, q, r, j, k, \theta)$ to $\left(\frac{2(q + 2 \vee n - 1)}{q}, 2, 2, 0, 1, \frac{2 \vee n - 1}{q + 2 \vee n - 1}\right)$ in (\ref{5.4}), and in the fourth and fifth inequalities, we applied Young's inequality along with the assumption \textbf{(A-1)}.

Due to the fact that $2r\leq q$, we have
\begin{align*}
    I_3\leq C\int_0^{t} \frac{\|u(s)\|_q^{2q}}{\left(1+\|u(s)\|_{q}^{q}\right)^2} \, ds\leq C.
\end{align*}

Substituting the above two inequalities into (\ref{lem 3'}), we get
\begin{align*}
         &\log(1+\|u(t)\|_{q}^{q})+(q^2 - q -\epsilon_1)  \int_0^{t}\frac{\|\nabla u^{\frac{q}{2}}(s)\|_2^2 }{1+\|u(s)\|_{q}^{q}}\, ds+(q - 1) \chi \int_0^{t} \frac{\int_\mathcal{O} u^{q - 1}(s) G * u(s) \, dx}{1+\|u(s)\|_{q}^{q}} \, ds\nonumber\\
         &\leq C+ \log(1+\|u_0\|_{q}^{q})+  \int_0^{t} \frac{\epsilon_2\|u(s)\|_q^{q+2r}+C_{\epsilon_1,\epsilon_2}(1+\|u(s)\|_{q}^{q})}{\left(1+\|u(s)\|_{q}^{q}\right)^2} \, ds \nonumber \\
        &\quad - \left(\frac{q}{2} \sum_{k=1}^\infty b_i^2-\epsilon_2\right)\int_0^{t} \frac{\|u(s)\|_q^{2q+2r}}{\left(1+\|u(s)\|_{q}^{q}\right)^2} \, ds +M(t)\nonumber\\
        &\leq C+ \log(1+\|u_0\|_{q}^{q})+ M_t - \frac{\left(\frac{q}{2} \sum_{k=1}^\infty b_i^2-\epsilon_2\right)}{q^2\sum_{k=1}^\infty b_i^2}\langle M\rangle_t,
     \end{align*}
     where $\langle M\rangle_t$ represents the quadratic variation of $M_t$, the constant $C$ actually depends on $T, \epsilon_1, \epsilon_2$ etc.  Choosing $\epsilon_1=\frac{q^2-q}{2}$ and $\epsilon_2=\frac{q-q^2\eta}{2}\sum_{k=1}^\infty b_i^2$, we deduce that
    \begin{align*}
         \log(1+\|u(t)\|_{q}^{q})
        \leq C+ \log(1+\|u_0\|_{q}^{q})+ M_t - \frac{\eta}{2}\langle M\rangle_t, \quad t\leq T.
     \end{align*}
This  implies that for $R>0$,
\begin{align*}
   & \mathbb{P}\left( \sup_{t\in [0,T\wedge\tau_m]}\|u(t)\|_{q}^{q}\geq R\right)\nonumber\\
    &\leq\mathbb{P}\left( \eta\log\sup_{t\in [0,T\wedge\tau_m]}\|u(t)\|_{q}^{q}\geq \eta\log R\right)\nonumber\\
    &\leq \mathbb{P}\left( \sup_{t\in [0,T\wedge\tau_m]}\left(\eta M_t - \frac{\eta^2}{2}\langle M\rangle_t\right)\geq \eta (\log R-C- \log(1+\|u_0\|_{q}^{q}))\right)\nonumber\\
    &\leq \mathbb{P}\left( \sup_{t\in [0,T\wedge\tau_m]}\exp\left\{\eta M_t - \frac{\eta^2}{2}\langle M\rangle_t\right\}\geq \exp\left\{\eta(\log R-C- \log(1+\|u_0\|_{q}^{q}))\right\}\right).
\end{align*}
Then, the maximal supermartingale inequality
yields that
\begin{align*}
   & \mathbb{P}\left( \sup_{t\in [0,T\wedge\tau_m]}\|u(t)\|_{q}^{q}\geq R\right)\leq \exp\left\{-\eta(\log R-C- \log(1+\|u_0\|_{q}^{q}))\right\}\leq \frac{C}{R^\eta}.
\end{align*}
Let $m\rightarrow \infty $ to complete the proof.
\hfill $\Box$\end{proof}

\begin{lem}\label{lem 4.10'}
Under the assumption of Theorem \ref{Thm main2}, for $T>0$, there exists constants $C_{\|u_0\|_\infty,T}>0$ and $0<\gamma<\frac{1}{2(2\vee n \vee (r+1))}$ such that
\begin{align*}
    \mathbb{E}\sup_{t\in  [0,T\wedge\tau]}\|u(t)\|_\infty^\gamma \leq C_{\|u_0\|_\infty,T}.
\end{align*}
\end{lem}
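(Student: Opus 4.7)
The strategy parallels Lemma 4.3, with two complications: the drift has polynomial growth $|g(s)|\leq c_2+\mu'|s|^n$ under (A-2), and the noise $\sigma_i(u)=b_i\|u\|_q^r u$ is genuinely nonlinear. Starting from the mild formulation, one has
\[
\|u(t)\|_\infty \leq C\|u_0\|_\infty+I_1(t)+I_2(t)+I_3(t),
\]
with $I_1, I_2, I_3$ the Duhamel contributions of the chemotactic, reactive, and stochastic terms respectively. Since $\gamma\in(0,1)$, the inequality $(a+b+c+d)^\gamma \leq a^\gamma+b^\gamma+c^\gamma+d^\gamma$ lets us handle each term separately. The key idea is to reduce every bound to a power of $\sup_{[0,T\wedge\tau]}\|u(t)\|_q$, whose fractional moments of every order strictly less than $1$ are available from Lemma 5.3.

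For $I_1$, I would pick $p>2$ and $\beta\in(1/p,1/2)$; the semigroup bound (1.7) combined with the elliptic inequality (1.11) applied with index $q$ gives $\|u\nabla G\ast u\|_p \leq \|u\|_p\|\nabla G\ast u\|_\infty \leq C\|u\|_q^2$, whence
\[
I_1(t) \leq C_\epsilon T^{1/2-\beta-\epsilon}\sup_{s\leq T\wedge\tau}\|u(s)\|_q^2.
\]
For $I_2$, choose $p_1>1$ with $np_1\leq q$ and $\beta_1\in(1/p_1,1)$; the embedding $D((A_{p_1}+1)^{\beta_1})\hookrightarrow C(\bar{\mathcal{O}})$ together with (A-2) yields $\|g(u)\|_{p_1}\leq C+C\|u\|_q^n$, so that
\[
I_2(t) \leq C_T\bigl(1+\sup_{s\leq T\wedge\tau}\|u(s)\|_q^n\bigr).
\]
For the stochastic term $I_3$, apply Lemma 2.3 with exponent $\gamma$, letting its spatial parameter equal $q/2$ and its time parameter exceed $q/(q-2)$ (a legitimate choice since their reciprocals sum to less than $1$). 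Because $\|\sigma(u)\|_{\ell^2}\leq \|b\|_{\ell^2}\|u\|_q^r|u|$ and the spatial norm collapses to $\|u\|_q$ under this choice, Lemma 2.3 yields
\[
\mathbb{E}\sup_{t\leq T\wedge\tau} I_3(t)^\gamma \leq C_T\,\mathbb{E}\bigl(\sup_{s\leq T\wedge\tau}\|u(s)\|_q\bigr)^{(r+1)\gamma}.
\]

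Assembling the three bounds gives
\[
\mathbb{E}\sup_{t\leq T\wedge\tau}\|u(t)\|_\infty^\gamma \leq C_{\|u_0\|_\infty,T}\Bigl(1+\mathbb{E}\sup\|u\|_q^{2\gamma}+\mathbb{E}\sup\|u\|_q^{n\gamma}+\mathbb{E}\sup\|u\|_q^{(r+1)\gamma}\Bigr).
\]
The hypothesis $\gamma<\tfrac{1}{2(2\vee n\vee(r+1))}$ forces each of $2\gamma$, $n\gamma$, $(r+1)\gamma$ to be strictly less than $1$, with a factor of $2$ of margin that absorbs the loss when converting the tail bound for $\sup\|u\|_q^q$ in Lemma 5.3 into genuine moment bounds for $\sup\|u\|_q$. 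The main obstacle is precisely the stochastic term $I_3$: because the noise intensity couples the global $L^q$ norm with the pointwise value of $u$, one must choose the spatial index in Lemma 2.3 to be at most $q$, otherwise $\|u\|_\infty$ would re-appear on the right-hand side and prevent closing the estimate. Once the indices are aligned so that only $\|u\|_q$ appears, the moment control from Lemma 5.3 finishes the proof.
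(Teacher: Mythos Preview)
Your proposal is correct and follows essentially the same route as the paper's proof: decompose the mild formula into chemotactic, reactive, and stochastic Duhamel pieces, bound each in terms of a power of $\sup_{[0,T\wedge\tau]}\|u\|_q$, and then close using the tail estimate of the preceding lemma. The paper makes the specific choices $p=q$ for $I_1$, $p_1=q/n$ for $I_2$, and time parameter $\infty$ (with spatial parameter $q/2$) in the stochastic-convolution lemma for $I_3$, and it spells out the tail-to-moment conversion explicitly by choosing $\eta_0\in(0,1/q)$ so that the resulting tail exponent $\beta=\eta_0 q/\bigl(\gamma(2\vee n\vee(r+1))\bigr)$ exceeds $2$, which is exactly the ``factor-of-$2$ margin'' you describe.
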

\begin{proof}
     Similar to the proof of  equation (\ref{Phi}), we can derive that for $t<\tau$,
\begin{align*}
    \|u(t)\|_\infty \leq & \|e^{-tA}u_0\|_\infty + C \left( \int_0^{t}\|(A_{q}+1)^\beta e^{-(t-s)A}\nabla\cdot(u\nabla G*u)(s)\|_{q} \, ds \right) \nonumber\\
    & + \left(\int_0^{t}\|(A_{\frac{q}{n}}+1)^{\beta'} e^{-(t-s)A}g(u(s))\|_\frac{q}{n} \, ds \right)\nonumber\\
    &+ \|\sum_{i=1}^{\infty}b_i\int_0^t e^{-(t - s) A}\|u(s)\|_q^ru(s)  \, dW^i_s\|_{\infty}.
\end{align*}
Here, due to the assumption \textbf{(A-2)}, $q>2(2\vee n-1)>n$ and  $\beta'\in (\frac{n}{q},1)$ is chosen so that
$D((A_\frac{q}{n}+1)^{\beta'})\hookrightarrow C(\Bar{\mathcal{O}})$. Consequently,
\begin{align}\label{4 I'}
    \|u(t)\|_\infty^{\gamma} \leq & \|e^{-tA}u_0\|_\infty^{\gamma} + C\left( \int_0^{t}\|(A_{q}+1)^\beta e^{-(t-s)A}\nabla\cdot(u\nabla G*u)(s)\|_{q} \, ds \right)^{\gamma}\nonumber\\
    &+\left(\int_0^{t}\|(A_{\frac{q}{n}}+1)^{\beta'} e^{-(t-s)A}g(u(s))\|_\frac{q}{n} \, ds \right)^\gamma \nonumber\\
    & + C\|\sum_{i=1}^{\infty}b_i\int_0^t e^{-(t - s) A}\|u(s)\|_q^ru(s)  \, dW^i_s\|_{\infty}^{\gamma} \nonumber\\
    \leq & C\|u_0\|_\infty^{\gamma} + I_1(t) + I_2(t)+I_3(t).
\end{align}

By (\ref{A2}) and the assumption ({\bf (A-2)}), for $\epsilon\in (0,\frac{1}{2}-\beta)$ and $\epsilon'\in (0,1-\beta')$, we estimate $I_1(t),I_2(t)$ as follows:
\begin{align*}
    I_1(t) \leq & C_{\epsilon}\left(\int_0^{t} (t-s)^{-\beta-\frac{1}{2}-\epsilon} \|(u\nabla G*u)(s)\|_{q} \, ds \right)^{\gamma} \nonumber\\
    \leq & C_{\epsilon}\left(\int_0^{t} (t-s)^{-\beta-\frac{1}{2}-\epsilon} \|u(s)\|_{q}^2 \, ds \right)^{\gamma} \nonumber\\
    \leq & C \sup_{s\in[0,{t}]} \|u(s)\|_{q}^{2\gamma} t^{\gamma(\frac{1}{2}-\beta-\epsilon)},
\end{align*}
\begin{align*}
    I_2(t) \leq & C_{\epsilon'}\left(\int_0^{t}\|(A_{\frac{q}{n}}+1)^{\beta'} e^{-(t-s)A}g(u(s))\|_\frac{q}{n} \, ds \right)^\gamma \nonumber\\
    \leq & C_{\epsilon'}\left(\int_0^{t}  \|1+u(s)\|_{q}^n \, ds \right)^{\gamma} \nonumber\\
    \leq & C \sup_{s\in[0,{t}]} \|u(s)\|_{q}^{n\gamma}.
\end{align*}

To estimate $I_3$, we apply Lemma \ref{M}, choosing $(\eta,r,q)$ in (\ref{Mt}) as $(\gamma,\infty,\frac{q}{2})$, to get
\begin{align*}
    \mathbb{E}\sup_{t\in[0,T\wedge \tau]} I_3(t) \leq C \sum_{i=1}^\infty b_i^2 \mathbb{E}\sup_{t\in[0,T\wedge\tau]} \|u(s)\|_{q}^{r\gamma+\gamma}.
\end{align*}

Now, applying the estimates we obtained for $I_1(t)$, $I_2(t)$ and $I_3(t)$, taking the expectation in (\ref{4 I'}) we deduce that
\begin{align}\label{gamma}
    \mathbb{E}\sup_{t\in  [0,T\wedge\tau]}\|u(t)\|_\infty^\gamma &\leq C+C\mathbb{E}\sup_{t\in[0,T\wedge\tau]}\|u(s)\|_q^{2\gamma}+C\mathbb{E}\sup_{t\in[0,T\wedge\tau]}\|u(s)\|_q^{n\gamma}+C\mathbb{E}\sup_{t\in[0,T\wedge\tau]}\|u(s)\|_q^{(r+1)\gamma}\nonumber\\
    &\leq C+C\mathbb{E}\sup_{t\in[0,T\wedge\tau]}\|u(s)\|_q^{\gamma(2\vee n \vee (r+1))}.
\end{align}
Since $\gamma(2\vee n \vee (r+1))<\frac{1}{2} $, we can find $\eta_0\in (0,\frac{1}{q})$ such that
\begin{align*}
    \beta:=\frac{\eta_0q}{\gamma(2\vee n \vee (r+1))}>2.
\end{align*}
By  Lemma \ref{lem gamma'}, we have
\begin{equation*}
\mathbb{P}\left( \sup_{t\in [0,T\wedge\tau]}\|u(t)\|_{q}^{q}\geq R\right)\leq \frac{C}{R^{\eta_0}}.
\end{equation*}
Thus, we get
\begin{equation*}
\mathbb{P}\left( \sup_{t\in [0,T\wedge\tau]}\|u(t)\|_{q}^{\gamma(2\vee n \vee (r+1))}\geq R\right)\leq \frac{C}{R^\beta}.
\end{equation*}
This implies that
\begin{align}
\mathbb{E}\sup_{t\in[0,T\wedge\tau]}\|u(s)\|_q^{\gamma(2\vee n \vee (r+1))}&\leq \sum_{i=0}^\infty (i+1)\mathbb{P}\left( i+1\geq\sup_{t\in [0,T\wedge\tau]}\|u(t)\|_{q}^{\gamma(2\vee n \vee (r+1))}\geq i\right)\nonumber\\
&\leq \sum_{i=0}^\infty (i+1)\frac{C}{i^\beta}\leq C.\nonumber
\end{align}

The proof is complete in view of  (\ref{gamma}).
\hfill $\Box$\end{proof}
\begin{thm}
 Under the assumption of Theorem \ref{Thm main2}, the system (\ref{main2}) admits a unique global mild solution.
\end{thm}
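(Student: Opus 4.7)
The plan is to argue global existence by showing that the maximal stopping time $\tau$ of the local solution is almost surely infinite, mirroring the strategy carried out at the end of Section~\ref{section4} for the additive/linear-growth case. Uniqueness is already contained in Theorem~\ref{thm l'}, so only non-explosion needs to be established. The key ingredient is the moment bound in Lemma~\ref{lem 4.10'}, which furnishes some $\gamma\in(0,1)$ and a finite constant $C_{\|u_0\|_\infty,T}$ such that
\begin{equation*}
\mathbb{E}\sup_{t\in[0,T\wedge\tau]}\|u(t)\|_\infty^\gamma \leq C_{\|u_0\|_\infty,T}.
\end{equation*}

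First I would recall from the local existence construction that the approximating stopping times $\tau_m=\inf\{t>0 : \|u\|_{\Upsilon^u_t}\geq m\}$ form a non-decreasing sequence with $\tau_m\uparrow\tau$ a.s., and that on $\{\tau_m\leq T\}$ one has $\sup_{t\in[0,T\wedge\tau]}\|u(t)\|_\infty\geq m$. Then Markov's inequality (applied to the $\gamma$-th power, which is the only moment we control) gives
\begin{equation*}
\mathbb{P}(\tau_m\leq T)\leq \mathbb{P}\!\left(\sup_{t\in[0,T\wedge\tau]}\|u(t)\|_\infty^\gamma\geq m^\gamma\right)\leq \frac{C_{\|u_0\|_\infty,T}}{m^\gamma}.
\end{equation*}
Passing to the limit $m\to\infty$ yields $\mathbb{P}(\tau\leq T)=\lim_m\mathbb{P}(\tau_m\leq T)=0$, and since $T$ is arbitrary, $\tau=\infty$ almost surely. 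Combined with Theorem~\ref{thm l'}, this delivers a unique global mild solution of (\ref{main2}).

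I do not expect any serious obstacle: once the a priori bound in Lemma~\ref{lem 4.10'} is in place, the Markov argument is routine. The only small subtlety is that the available moment is $\gamma<\tfrac{1}{2(2\vee n\vee(r+1))}$, which is strictly less than one, so one must apply Markov's inequality to $\|u\|_\infty^\gamma$ rather than to $\|u\|_\infty$ itself; as shown above this is harmless because $m^\gamma\to\infty$ as $m\to\infty$. The genuine work has already been absorbed into Lemma~\ref{lem gamma'} and Lemma~\ref{lem 4.10'}, where the nonlinear noise term $\sum_i b_i\|u\|_q^r u\,dW^i$ was exploited via the logarithmic Lyapunov function $V(r)=\log(1+r)$ to beat the super-linear drift.
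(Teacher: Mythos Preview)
Your proposal is correct and matches the paper's own proof essentially line for line: the paper also invokes Lemma~\ref{lem 4.10'}, applies Markov's inequality to $\|u\|_\infty^\gamma$ to get $\mathbb{P}(\tau_m\leq T)\leq m^{-\gamma}\,\mathbb{E}\sup_{t\in[0,T\wedge\tau]}\|u(t)\|_\infty^\gamma$, lets $m\to\infty$ and then $T$ vary, and cites Theorem~\ref{thm l'} for uniqueness. Your remark about needing to work with the $\gamma$-th moment rather than the first moment is exactly the point the paper handles by inserting the step $\mathbb{P}(\sup\|u\|_\infty\geq m)=\mathbb{P}(\sup\|u\|_\infty^\gamma\geq m^\gamma)$.
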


\begin{proof}
Let $(u,v,\tau)$ be the maximal local solution of the system (\ref{main2}). Applying Markov's inequality along with Lemma \ref{lem 4.10'}, we find that
\begin{align*}
    \mathbb{P}(\tau\leq T) =& \lim_{m\rightarrow\infty}\mathbb{P}(\tau_m\leq T) \nonumber\\
    \leq & \lim_{m\rightarrow\infty}\mathbb{P}\left( \sup_{t\in[0,T\wedge\tau]}\|u(t)\|_\infty \geq m \right) \nonumber\\
    =&\lim_{m\rightarrow\infty}\mathbb{P}\left( \sup_{t\in[0,T\wedge\tau]}\|u(t)\|_\infty^{\gamma } \geq m^{\gamma } \right) \nonumber\\
    \leq & \lim_{m\rightarrow\infty}\frac{1}{m^{\gamma }}\mathbb{E}\sup_{t\in [0,T\wedge\tau]}\|u(t)\|_\infty^{\gamma } \nonumber\\
    =& 0.
\end{align*}
Since $T$ is arbitrary, we have $P(\tau=\infty)=1$, establishing the global existence of solutions. The uniqueness was given in Theorem {\ref{thm l'}}.
\hfill $\Box$\end{proof}

\vskip 0.5cm

\noindent {\bf Acknowledgments}.  This work is partly supported by National Key R\&D
Program of China (No.2022YFA1006001) and by National Natural Science Foundation of China(No. 12131019, No. 12001516, No. 11721101, No. 12371151),  the Fundamental Research Funds for the Central Universities (No. WK3470000031, No. WK0010000081).

\bibliographystyle{cas-model2-names}

\begin{thebibliography}{99}




\bibitem{DZ}
G. Da Prato and J. Zabczyk, \textit{Stochastic Equations in Infinite Dimensions. Encyclopedia of Mathematics and its Applications,} Cambridge University Press, 1992.

\bibitem{DG}
K. Dareiotis and M. Gerencsér, On the boundedness of solutions of SPDEs, \textit{Stoch. Partial Differ. Equ. Anal. Comput.}, \textbf{3} (2015), 84-102.

\bibitem{FGL}
F. Flandoli, L. Galeati, and D. Luo, Delayed blow-up by transport noise, \textit{Comm. Partial Differential Equations}, \textbf{46} (2021), 1757–1788.

\bibitem{FL}
 F. Flandoli  and D. Luo,   High mode transport noise improves vorticity blow-up control in 3D Navier-Stokes equations,  \textit{Probab. Theory Related Fields }  \textbf{180} (2021), 309-363.

\bibitem{GM}
L. Gawarecki and V. Mandrekar, \textit{Stochastic Differential Equations in Infinite Dimension}, Springer, Berlin and New York, 2011.

\bibitem{H}
D. Henry, \textit{Geometric Theory of Semilinear Parabolic Equations}, Springer Berlin, Heidelberg, 2006.

\bibitem{HV}
M. A. Herrero and J. J. L. Velázquez, A blow-up mechanism for a chemotaxis model, \textit{Ann. Scuola Norm. Sup. Pisa Cl. Sci.}, \textbf{24} (1997), 633–683.


\bibitem{HP}
T. Hillen and K. Painter, A user’s guide to PDE models for chemotaxis, \textit{J. Math. Biol.}, \textbf{58} (2009), 183–217.

\bibitem{H1}
D. Horstmann, From 1970 until present: the Keller–Segel model in chemotaxis and its consequences I, \textit{Jahresber. Dtsch. Math.-Ver.}, \textbf{105} (2003), 103–165.


\bibitem{H2}
D. Horstmann, From 1970 until present: the Keller–Segel model in chemotaxis and its consequences II, \textit{Jahresber. Dtsch. Math.-Ver.}, \textbf{106} (2004), 51-69.

\bibitem{H3}
D. Horstmann, Do some chemotaxis-growth models possess Lyapunov functionals?, \textit{Appl. Math. Lett.}, \textbf{53} (2016), 107-111.

\bibitem{HW}
D. Horstmann and M. Winkler, Boundedness vs. blow-up in a chemotaxis system, \textit{J. Differential Equations}, \textbf{215} (2005), 52–107.



\bibitem{HQ}
H. Huang and J. Qiu, The microscopic derivation and well-posedness of the stochastic Keller–Segel equation, \textit{J. Nonlinear Sci.}, \textbf{31} (2021), no. 6.


\bibitem{HLL}
W. Hong, S. Li and W. Liu,
Regularization by Nonlinear Noise for PDEs: Well-posedness and Finite Time Extinction,
\textit{arXiv:2407.06840}, (2024).

\bibitem{IM}
J. Ignacio Tello and M. Winkler, A Chemotaxis System with Logistic Source, \textit{Comm. Partial Differential Equations}, \textbf{32} (2007), 849–877.


\bibitem{KS}
E. F. Keller and L. A. Segel, Initiation of slime mold aggregation viewed as an instability, \textit{J. Theoret. Biol.}, \textbf{26} (1970), 399–415.

\bibitem{NK}
N. Krylov, \textit{An Analytic Approach to SPDEs. Stochastic Partial Differential Equations: Six Perspectives}, AMS Mathematical Surveys and Monographs, vol. 64, 1999.

\bibitem{MR}
V. Mandrekar and B. Rüdiger, \textit{Stochastic Integration in Banach Spaces}, Springer, Berlin, 2015.

\bibitem{MST}
O. Misiats, O. Stanzhytskyi, and I. Topaloglu, On global existence and blowup of solutions of stochastic Keller–Segel type equation, \textit{NoDEA Nonlinear Differential Equations Appl.}, \textbf{29} (2022), No. 3.


\bibitem{OTYM}
K. Osaki, T. Tsujikawa, A. Yagi, and M. Mimura, Exponential attractor for a chemotaxis-growth system of equations, \textit{Nonlinear Anal.}, \textbf{51} (2002), 119–144.


\bibitem{RTW}
 P. Ren, H. Tang and F. Y. Wang,   Distribution-path dependent nonlinear SPDEs
 with application to stochastic transport type equations. \textit{Potential Anal.} \textbf{61} (2024), 379-407.


\bibitem{RZZ}
 M. R\"{o}ckner, R. C.  Zhu, and X. C. Zhu,   Local existence and non-explosion of
 solutions for stochastic fractional partial differential equations driven by multiplicative noise.
 \textit{Stochastic Process. Appl.} \textbf{124} (2014), 1974-2002.

\bibitem{RSG}
M. Rosenzweig and G. Staffilani,  Global solutions of aggregation equations and other flows with random diffusion. \textit{Probab. Theory Relat. Fields} \textbf{185} (2023) 1219–1262.

\bibitem{T}
H. Tanabe, N. Mugibayashi, and H. Haneda, \textit{Equations of Evolution}, Pitman, London, 1979.

\bibitem{TY}
H. Tang  and A. Yang, Noise effects in some stochastic evolution equations: global
 existence and dependence on initial data. \textit{Ann. Inst. Henri Poincar\'{e} Probab. Stat. } \textbf{59} (2023), 378-410.



\bibitem{M1}
M. Winkler, Absence of collapse in a parabolic chemotaxis system with signal-dependent sensitivity, \textit{Math. Nachr.}, \textbf{283} (2010), 1664–1673.



\bibitem{W2010}
M. Winkler, Aggregation vs. global diffusive behavior in the higher-dimensional Keller–Segel model, \textit{J. Differential Equations}, \textbf{248} (2010), 2889-2905.

\bibitem{M2}
M. Winkler, Global asymptotic stability of constant equilibria in a fully parabolic chemotaxis system with strong logistic dampening, \textit{J. Differential Equations}, \textbf{257} (2014), 1056–1077.



\bibitem{W}
Q. Wu, A new type of the Gronwall-Bellman inequality and its application to fractional stochastic differential equations, \textit{Cogent Math.}, \textbf{4} (2017), Art. ID 1279781.











\end{thebibliography}

\end{document}